\newcommand{\beq}{\begin{equation}}
\newcommand{\eeq}{\end{equation}}
\newcommand{\beqa}{\begin{eqnarray}}
\newcommand{\eeqa}{\end{eqnarray}}
\newcommand{\beqas}{\begin{eqnarray*}}
\newcommand{\eeqas}{\end{eqnarray*}}
\newcommand{\ba}{\begin{array}}
\newcommand{\ea}{\end{array}}
\newcommand{\bi}{\begin{itemize}}
\newcommand{\ei}{\end{itemize}}
\newcommand{\nn}{\nonumber}
\DeclareMathOperator*{\Argmax}{Argmax}  
\newcommand{\mcX}{{\mathcal X}}
\newcommand{\mcY}{{\mathcal Y}}
\newcommand{\mcL}{{\mathcal L}}
\newcommand{\mcT}{{\mathcal T}}
\newcommand{\prox}{\mathrm{prox}}
\newcommand{\dom}{\mathrm{dom}}
\newcommand{\dist}{\mathrm{dist}}
\newcommand{\argmin}{\arg\min}
\newcommand{\argmax}{\arg\max}
\newtheorem{lemma}{Lemma}
\newtheorem{thm}{Theorem}
\newtheorem{defi}{Definition}
\newtheorem{assumption}{Assumption}
\newtheorem{rem}{Remark}
\newcounter{spb}
\def\cA{{\cal A}}
\def\cB{{\mathbb B}}
\def\cF{{\cal F}}
\def\cO{{\cal O}}
\def\cA{{\cal A}}
\def\tlambda{{\tilde \lambda}}
\def\wT{{\widehat T}}
\def\tg{{\tilde g}}
\def\tx{{\tilde x}}
\def\ty{{\tilde y}}
\def\bR{{\mathbb{R}}}
\def\halpha{{\hat\alpha}}
\def\hdelta{{\hat\delta}}
\def\tf{{\tilde f}}
\def\xe{{x_\epsilon}}
\def\ye{{y_\epsilon}}
\def\bbK{\mathbb{K}}
\def\bbT{\mathbb{T}}
\def\tl{\tilde\lambda}
\def\bh{{h}}
\def\bH{{H}}
\def\hh{{\bH^*_{\epsilon}}}
\def\h{\bar h}
\def\H{\bar H}
\def\cP{{\cal P}}
\def\cG{{\cal G}}
\def\tp{{\widetilde\partial}}
\def\AL{{\mcL}}
\def\np{{p}}
\def\nq{{q}}
\def\bbf{{\bar f}}
\def\bp{{\bar p}}
\def\tp{{\tilde p}}
\def\bff{{\rm \bf nf}}
\def\bfx{{\rm \bf x}}
\def\bfy{{\rm \bf y}}
\def\tm{{\tilde m}}
\def\tn{{\tilde n}}
\definecolor{ngreen}{RGB}{38,217,169}
\title{A first-order augmented Lagrangian method for constrained minimax optimization}
\author{
Zhaosong Lu
\thanks{
Department of Industrial and Systems Engineering, University of Minnesota, USA (email: {\tt zhaosong@umn.edu}, {\tt mei00035@umn.edu}). This work was partially supported by NSF Award IIS-2211491, ONR Award N00014-24-1-2702, and AFOSR Award FA9550-24-1-0343.}
\and
Sanyou Mei
\footnotemark[2]
}
\date{January 5, 2023 (Revised: October 28, 2024)}
\begin{document}
\maketitle

\begin{abstract}
In this paper we study a class of constrained minimax problems. In particular, we propose a first-order augmented Lagrangian method for solving them, whose subproblems turn out to be a much simpler structured minimax problem and are suitably solved by a first-order method developed in this paper. Under some suitable assumptions, an \emph{operation complexity} of $\cO(\varepsilon^{-4}\log\varepsilon^{-1})$, measured by its fundamental operations, is established for the first-order augmented Lagrangian method for finding an $\varepsilon$-KKT solution of the constrained minimax problems. 
\end{abstract}

\noindent {\bf Keywords:}  minimax optimization, augmented Lagrangian method, first-order method, operation complexity

\medskip

\noindent {\bf Mathematics Subject Classification:} 90C26, 90C30, 90C47, 90C99, 65K05 

\section{Introduction}

In this paper, we consider a constrained minimax problem
\beq\label{prob}
F^*=\min_{c(x)\leq 0}\max_{d(x,y)\leq 0}\{F(x,y):=f(x,y)+p(x)-q(y)\}.
\eeq
For notational convenience, throughout this paper we let $\mcX:=\dom\,p$ and $\mcY:=\dom\,q$, where $\dom\,p$ and $\dom\,q$ are the domain of $p$ and $q$, respectively. 
Assume that problem \eqref{prob} has at least one optimal solution and the following additional assumptions hold.
\begin{assumption}\label{a1}
\begin{enumerate}[label=(\roman*)]
\item $f$ is $L_{\nabla f}$-smooth on $\mcX\times\mcY$ and $f(x,\cdot)$ is concave for any given $x\in\mcX$.\footnote{The definitions of $L_{\phi}$-Lipschitz continuity and $L_{\nabla \phi}$-smoothness of a function or mapping $\phi$ are given in Subsection \ref{notation}.}
\item $p:\bR^n\to\bR\cup\{+\infty\}$ and $q:\bR^m\to\bR\cup\{+\infty\}$ are proper closed convex functions, and the proximal operator of $p$ and $q$ can be exactly evaluated.
\item $c:\bR^n\to\bR^{\tn}$ is $L_{\nabla c}$-smooth and $L_c$-Lipschitz continuous on $\mcX$, $d:\bR^n\times\bR^m\to\bR^{\tm}$ is $L_{\nabla d}$-smooth and $L_d$-Lipschitz continuous on $\mcX\times\mcY$, and each component $d_i(x,\cdot)$ of $d$ is convex for all $i=1,\ldots, \tm$ and  $x\in\mcX$.
\item The sets $\mcX$ and $\mcY$ (namely, $\dom\,p$ and $\dom\,q$) are compact.
\end{enumerate}
\end{assumption}

Problem \eqref{prob} has found applications in machine learning such as perceptual adversarial robustness \cite{laidlaw2020perceptual} and robust adversarial classification \cite{ho2023adversarial}.  Besides, it has potential application to constrained bilevel optimization
 \begin{equation}\label{BLO}
\min\limits_{x,y} \bbf(x,y)+\bp(x) \quad  {\rm s.t.}  \quad y\in\argmin\limits_z\{\tf(x,z)+\tp(z)|\tg(x,z)\leq0\},               
\end{equation}
where $\bp$ and $\tp$ are proper closed convex functions, $\tg$, $\nabla \bbf$, $\nabla \tf$  and $\nabla \tg$ are Lipschitz continuous on $\dom\,\bp \times \dom\,\tp$,  and $\tg_i(x,\cdot)$ is convex for each $x\in\dom\,\bp$. Specifically, \eqref{BLO} can be tackled by solving a sequence of subproblems in the form of \eqref{prob}. Indeed, observe that \eqref{BLO} is equivalent to
 \begin{equation}\label{BLO-ref}
\min\limits_{x,y} \bbf(x,y)+\bp(x) \quad  {\rm s.t.}  \quad \tg(x,y)\leq0, \ \  \tf(x,y)+\tp(y)- \min\limits_z\{\tf(x,z)+\tp(z)|\tg(x,z)\leq0\} \leq 0.                
\end{equation}
Notice that any feasible point $(x,y)$ of \eqref{BLO-ref} satisfies $\tf(x,y)+\tp(y)- \min_z\{\tf(x,z)+\tp(z)|\tg(x,z)\leq0\} \geq 0$. As a result, one natural approach to tackling  \eqref{BLO-ref} is by solving a sequence of penalty subproblems in the form of
\[
\min\limits_{\tg(x,y)\leq0} \big\{\bbf(x,y)+\bp(x)+\rho\big(\tf(x,y)+\tp(y)- \min\limits_z\{\tf(x,z)+\tp(z)|\tg(x,z)\leq0\}\big)\big\},               
\]
which turns out to be a special case of  \eqref{prob}  given by 
\[
\min\limits_{\tg(x,y)\leq0} \max\limits_{\tg(x,z)\leq0}\big\{\bbf(x,y)+\rho\big(\tf(x,y)- \tf(x,z)\big)+\bp(x)-\rho\tp(z)\big\}.              
\]

In the recent years, the minimax problem of a simpler form
\beq \label{special-minimax}
\min_{x\in X}\max_{y\in Y} f(x; y),
\eeq
where $X$ and $Y$ are closed sets, has received tremendous amount of attention. Indeed, it has found broad applications in many areas, such as adversarial training \cite{Good15,Madry18,Sin18,Wang21}, generative adversarial networks \cite{Gid19,Good14,sanj18}, reinforcement learning \cite{Dai18,Du17,Na19,qiu20,song18}, computational game \cite{Ant21,Rak13,Syr15}, distributed computing \cite{Mat10,Sha08},  prediction and regression \cite{Ces06,Tas06,Xu09,XuNe05}, and distributionally robust optimization \cite{Duc19,Sha15}. Numerous methods have been developed for solving  \eqref{special-minimax} with $X$ and $Y$ being \emph{simple closed convex sets} (e.g., see \cite{chen2021proximal,guo2023fast,huang2022accelerated,lin20b,lin2020near,lu2020hybrid,
luo2020stochastic,nou19,xian2021faster,xu2020gradient,xu2023unified,zhang2020single}). 

There have also been several studies on some other special cases of problem  \eqref{prob}. In particular, two first-order methods, called max-oracle gradient-descent and nested gradient descent/ascent methods, were proposed in \cite{goktas2021convex} for solving \eqref{prob} with $c(x)\equiv0$ and $p$ and $q$ being respectively the indicator function of simple compact convex sets $X$ and $Y$, under the assumption that $V(x)=\max_{y\in Y} \{f(x,y): d(x,y)\leq0\}$ is convex and moreover an optimal Lagrangian multiplier associated with the constraint $d(x,y)\leq 0$ can be computed for each $x\in X$. An augmented Lagrangian (AL) method was recently proposed in \cite{dai2022rate} for solving \eqref{prob} with {\it only equality constraints}, $p(x)\equiv0$, $q(y)\equiv0$ and $c(x)\equiv0$, under the assumption that a {\it local min-max point} of the AL subproblem can be found at each iteration. In addition,  a multiplier gradient descent method was proposed in \cite{tsaknakis2023minimax} for solving 
\eqref{prob} with $c(x)\equiv0$, $d(x,y)$ being an \emph{affine} mapping, and $p$ and $q$ being the indicator function of simple compact convex sets. Also, a proximal gradient multi-step ascent decent method was developed in \cite{dai2022optimality} 
for \eqref{prob} with $c(x)\equiv0$, $d(x,y)$ being an \emph{affine} mapping and $f(x,y)=g(x)+x^TAy-h(y)$, under the assumption that $f(x,y)-q(y)$ is \emph{strongly concave} in $y$. Besides, primal dual alternating proximal gradient methods were proposed in \cite{zhang2022primal} for \eqref{prob} with $c(x)\equiv0$, $d(x,y)$ being an \emph{affine} mapping, and \{$f(x,y)$ being strongly concave in $y$ or [$q(y)\equiv0$ and $f(x,y)$ being a linear function in $y$]\}. An iteration complexity of the method for finding an approximate stationary point of the aforementioned special minimax problem was established in  \cite{dai2022optimality,goktas2021convex,zhang2022primal}, respectively. Yet, their operation complexity, measured by the number of fundamental operations such as evaluations of gradient  of $f$ and proximal operator of $p$ and $q$, was not studied in these works. 

There was no algorithmic development for \eqref{prob} prior to our work, though optimality conditions of \eqref{prob} were recently studied in \cite{dai2020optimality}. In this paper, we propose a first-order AL method for solving  \eqref{prob}. Specifically, given an iterate $(x^k,y^k)$ and a Lagrangian multiplier estimate $(\lambda^k_\bfx,\lambda^k_\bfy)$ at the $k$th iteration,  the next iterate $(x^{k+1},y^{k+1})$ is obtained by finding an approximate stationary point of the AL subproblem
\[
\min_x\max_y \AL(x,y,\lambda^k_\bfx,\lambda^k_\bfy;\rho_k)
\]
for some $\rho_k>0$ through the use of a first-order method proposed in this paper, 
where $\AL$ is the AL function of \eqref{prob} defined as 
\beq\label{AL}
\AL(x,y,\lambda_\bfx,\lambda_\bfy;\rho)=F(x,y)+\frac{1}{2\rho}\left(\|[\lambda_\bfx+\rho c(x)]_+\|^2-\|\lambda_\bfx\|^2\right)-\frac{1}{2\rho}\left(\|[\lambda_\bfy+\rho d(x,y)]_+\|^2-\|\lambda_\bfy\|^2\right),
\eeq
which is a generalization of the AL function introduced in \cite{dai2022rate} for an equality constrained minimax problem. The Lagrangian multiplier estimate is then updated by $\lambda_\bfx^{k+1}=\Pi_{\cB^+_\Lambda}(\lambda^k_\bfx+\rho_kc(x^{k+1}))$ and $\lambda^{k+1}_\bfy=[\lambda^k_\bfy+\rho_kd(x^{k+1},y^{k+1})]_+$  for some $\Lambda>0$, where $\Pi_{\cB^+_\Lambda}(\cdot)$ and $[\cdot]_+$ are defined in Section \ref{notation}. 

\vspace{.1in}

The main contributions of this paper are summarized below.

\begin{itemize}
\item  We propose a first-order AL method for solving problem \eqref{prob}. To the best of our knowledge, this is the first yet implementable method for solving \eqref{prob}.
\item We show that under some suitable assumptions, our first-order AL method enjoys an iteration complexity of $\cO(\log\varepsilon^{-1})$ and an operation complexity of $\cO(\varepsilon^{-4}\log\varepsilon^{-1})$, measured by the number of evaluations of $\nabla f$, $\nabla c$, $\nabla d$ and proximal operator of $p$ and $q$, for finding an $\varepsilon$-KKT solution of \eqref{prob}.
\end{itemize}

The rest of this paper is organized as follows. In Subsection \ref{notation}, we introduce some notation and terminology. In Section \ref{minimax}, we propose a first-order method for solving a nonconvex-concave minimax problem and study its complexity. In Section~\ref{sec:main}, we propose a first-order AL method for solving problem \eqref{prob} and present complexity results for it. Finally, we provide the proof of the main results in Section \ref{sec:proofs}.

\subsection{Notation and terminology}  \label{notation}
The following notation will be used throughout this paper. Let $\bR^n$ denote the Euclidean space of dimension $n$ and $\bR^n_+$ denote the nonnegative orthant in $\bR^n$. The standard inner product, $l_1$-norm and Euclidean norm are denoted by $\langle\cdot,\cdot\rangle$, $\|\cdot\|_1$ and $\|\cdot\|$, respectively. For any $\Lambda>0$, let $\cB^+_\Lambda =\{x \geq 0:\|x\|\leq \Lambda\}$, whose dimension is clear from the context. 
For any $v\in\bR^n$, let $v_+$ denote the nonnegative part of $v$, that is, $(v_+)_i=\max\{v_i,0\}$ for all $i$. Given a point $x$ and a closed set $S$ in $\bR^n$, let 
$\dist(x,S)=\min_{x'\in S} \|x'-x\|$, $\Pi_{S}(x)$ denote the Euclidean projection of $x$ onto $S$, and $\delta_S$ denote the indicator function associated with $S$.

A function or mapping $\phi$ is said to be \emph{$L_{\phi}$-Lipschitz continuous} on a set $S$ if $\|\phi(x)-\phi(x')\| \leq L_{\phi} \|x-x'\|$ for all $x,x'\in S$. In addition, it is said to be \emph{$L_{\nabla\phi}$-smooth} on $S$ if $\|\nabla\phi(x)-\nabla\phi(x')\| \leq L_{\nabla\phi} \|x-x'\|$ for all $x,x'\in S$. For a closed convex function $p:\bR^n\to \bR\cup\{+\infty\}$, the \emph{proximal operator} associated with $p$ is denoted by  
$\prox_p$,  that is,
\beq\label{eq:def-prox}
\prox_p(x) = \argmin_{x'\in\bR^n} \left\{ \frac{1}{2}\|x' - x\|^2 + p(x') \right\} \quad \forall x \in \bR^n.
\eeq
Given that evaluation of $\prox_{\gamma p}(x)$ is often as cheap as $\prox_p(x)$, we count the evaluation of $\prox_{\gamma p}(x)$ as one evaluation of proximal operator of $p$ for any $\gamma>0$ and $x\in\bR^n$. 

For a lower semicontinuous function $\phi:\bR^n\to \bR\cup\{+\infty\}$, its \emph{domain} is the set $\dom\, \phi := \{x| \phi(x)<+\infty\}$. The \emph{upper subderivative} of $\phi$ at $x\in \dom\, \phi$ in a direction $d\in\bR^n$ is defined by
\[
\phi'(x;d) = \limsup\limits_{x' \stackrel{\phi}{\to} x,\, t \downarrow 0} \inf_{d' \to d} \frac{\phi(x'+td')-\phi(x')}{t},
\] 
where $t\downarrow 0$ means both $t > 0$ and $t\to 0$, and $x' \stackrel{\phi}{\to} x$ means both $x' \to x$ and $\phi(x')\to \phi(x)$. The \emph{subdifferential} of $\phi$ at $x\in \dom\, \phi$ is the set 
\[
\partial \phi(x) = \{s\in\bR^n\big| s^T d \leq \phi'(x; d) \ \ \forall d\in\bR^n\}.
\]
We use $\partial_{x_i} \phi(x)$ to denote the subdifferential with respect to $x_i$.  
In addition, for an upper semicontinuous function $\phi$, its subdifferential is defined as $\partial \phi=-\partial (-\phi)$. If $\phi$ is locally Lipschitz continuous, the above definition of subdifferential coincides with the Clarke subdifferential. Besides, if $\phi$ is convex, it coincides with the ordinary subdifferential for convex functions. Also, if $\phi$ is continuously differentiable at $x$ , we simply have $\partial \phi(x) = \{\nabla \phi(x)\}$, where $\nabla \phi(x)$ is the gradient of $\phi$ at $x$. In addition, it is not hard to verify that $\partial (\phi_1+\phi_2)(x)=\nabla \phi_1(x)+\partial \phi_2(x)$ if $\phi_1$ is continuously differentiable at $x$ and $\phi_2$ is lower or upper semicontinuous at $x$. See \cite{clarke1990optimization,ward1987nonsmooth} for more details.

Finally, we introduce an (approximate) primal-dual stationary point (e.g., see \cite{dai2022optimality,dai2020optimality,kong2021accelerated}) for a general minimax problem
\begin{equation}\label{eg}
\min_{x}\max_{y}\Psi(x,y),
\end{equation}
where $\Psi(\cdot,y): \bR^n \to \bR \cup\{+\infty\}$ is a lower semicontinuous function, and $\Psi(x,\cdot): \bR^m \to \bR \cup\{-\infty\}$ is an upper semicontinuous function.

\begin{defi}  \label{def2}
A point $(x,y)$ is said to be a primal-dual stationary point of the minimax problem \eqref{eg} if 
\[
0 \in \partial_x\Psi(x,y), \quad 0\in\partial_y\Psi(x,y).
\]
In addition, for any $\epsilon>0$, a point $(\xe,\ye)$ is said to be an $\epsilon$-primal-dual stationary point of the minimax problem \eqref{eg} if
\begin{equation*}
\dist\left(0,\partial_x\Psi(\xe,\ye)\right)\leq\epsilon,\quad\dist\left(0,\partial_y\Psi(\xe,\ye)\right)\leq\epsilon.
\end{equation*}
\end{defi}
One can see that $(\xe,\ye)$ is an $\epsilon$-primal-dual stationary point of  \eqref{eg} if and only if 
$\xe$ and $\ye$ are an $\epsilon$-stationary point of $\min_{x}\Psi(x,\ye)$ and $\max_{y}\Psi(\xe,y)$, respectively.

\section{A first-order method for nonconvex-concave minimax problem} 
\label{minimax}

In this section, we propose a first-order method for finding an $\epsilon$-primal-dual stationary point of a  nonconvex-concave minimax problem introduced in Definition \ref{def2}, which will be used as a subproblem solver for the first-order AL method proposed in Section \ref{sec:main}. In particular, we consider the minimax problem
\begin{equation}\label{mmax-prob}
\bH^* = \min_x\max_y\left\{\bH(x,y)\coloneqq \bh(x,y)+\np(x)-\nq(y)\right\}.
\end{equation}
Assume that problem \eqref{mmax-prob} has at least one optimal solution and $p, q$ satisfy Assumption \ref{a1}.  In addition, $\bh$ satisfies the following assumption.

\begin{assumption}\label{mmax-a}
The function $\bh$ is $L_{\nabla\bh}$-smooth on $\dom\,\np\times\dom\,\nq$, and moreover, $\bh(x,\cdot)$ is concave for any $x\in\dom\,\np$.
\end{assumption}

Numerous algorithms have been developed for finding an approximate stationary point of the special case of \eqref{mmax-prob} with $p, q$ being the indicator function of a closed convex set
(e.g., see \cite{jin2020local,lin2020near,nou19,rafique2021weakly,thekumparampil2019efficient,Yang20}). They are however not applicable to \eqref{mmax-prob} in general.
Recently, an accelerated inexact proximal point smoothing (AIPP-S) scheme was proposed in \cite{kong2021accelerated} for finding an approximate stationary point of a class of minimax composite nonconvex optimization problems, which includes \eqref{mmax-prob} as a special case. When applied to \eqref{mmax-prob},  AIPP-S requires the availability of the oracle including exact evaluation of $\nabla_x \bh(x,y)$ and \beq \label{oracle}
\argmin_x \left\{\np(x)+\frac{1}{2\lambda}\|x-x'\|^2\right\}, \qquad \argmax_y \left\{\bh(x',y)-\nq(y)-\frac{1}{2\lambda}\|y-y'\|^2\right\}
\eeq
for any $\lambda>0$, $x'\in\bR^n$ and $y'\in\bR^m$. Notice that $\bh$ is typically sophisticated and the \emph{exact} solution of the second problem in \eqref{oracle} usually cannot be found. As a result, AIPP-S is generally not implementable for \eqref{mmax-prob}, though an operation complexity of $\cO(\epsilon^{-5/2})$, measured by the number of evaluations of the aforementioned oracle, was established in \cite{kong2021accelerated} for it to find an $\epsilon$-primal-dual stationary point of \eqref{mmax-prob}. In addition,  a first-order method was proposed in \cite{zhao2020primal} enjoying an operation complexity of 
$\cO(\varepsilon^{-3}\log\varepsilon^{-1})$, measured by the number of evaluations of $\nabla \bh$ and proximal operator of $p$ and $q$, for finding an $\epsilon$-primal stationary point $x'$ of \eqref{mmax-prob} satisfying
\[
 \Big\|\lambda^{-1}(x'- \argmin_{x} \Big\{\max_y H(x,y)+\frac{1}{2\lambda}\|x-x'\|^2\Big\} \Big\|\leq \epsilon
\]
for some $0<\lambda<L_{\nabla\bh}^{-1}$.  One can see that such $x'$ is an approximate stationary point of  \eqref{mmax-prob} by viewing it as a minimization problem.  Consequently, this method does not suit our need since we aim to find an $\epsilon$-primal-dual stationary point of \eqref{mmax-prob}  introduced in Definition \ref{def2}.

In what follows, we first propose a modified optimal first-order method for solving a strongly-convex-strongly-concave minimax problem in Subsection \ref{strong-cvx-ccv}. Using this method as a subproblem solver for an inexact proximal point scheme, we then propose a first-order method for \eqref{mmax-prob} in Subsection \ref{ppa}, which enjoys an operation complexity of $\cO(\epsilon^{-5/2}\log \epsilon^{-1})$, measured by the number of evaluations of $\nabla \bh$ and proximal operator of $p$ and $q$, for finding an $\epsilon$-primal-dual stationary point of \eqref{mmax-prob}.   

\subsection{A modified optimal first-order method for strongly-convex-strongly-concave minimax problem}
\label{strong-cvx-ccv}

In this subsection, we consider the strongly-convex-strongly-concave minimax problem
\begin{equation}\label{ea-prob}
\H^*=\min_{x}\max_{y}\left\{\H(x,y)\coloneqq \h(x,y)+\np(x)-\nq(y)\right\},
\end{equation}
where $\np, \nq$ satisfy Assumption \ref{a1} and $\h$ satisfies the following assumption.
\begin{assumption}\label{ea}
$\h(x,y)$ is $\sigma_x$-strongly-convex-$\sigma_y$-strongly-concave and $L_{\nabla\h}$-smooth on $\dom\,\np\times\dom\,\nq$ for some $\sigma_x,\sigma_y>0$.
\end{assumption}

Recently, a novel optimal first-order method \cite[Algorithm 4]{kovalev2022first} was proposed for solving 
\eqref{ea-prob}. Though the solution sequence of this method converges to the optimal solution with an optimal rate,  it lacks a verifiable termination criterion and also the approximate solution found by it may never be 
an $\bar\epsilon$-primal-dual stationary point of \eqref{ea-prob} (see Definition \ref{def2}) for a prescribed tolerance $\bar\epsilon>0$.  To tackle these issues, we next propose an optimal first-order method by modifying \cite[Algorithm 4]{kovalev2022first} for finding an approximate primal-dual stationary point of \eqref{ea-prob}. Before proceeding, we introduce some notation below, most of which is adopted from \cite{kovalev2022first}.

Recall that $\mcX=\dom\,p$ and $\mcY=\dom\,q$. Let $(x^*,y^*)$ denote the optimal solution of \eqref{ea-prob}, $z^*=-\sigma_x x^*$,  and
\begin{align}
&D_\bfx\coloneqq \max\{\|u-v\|\big|u,v\in\mcX\},\quad D_\bfy\coloneqq\max\{\|u-v\|\big|u,v\in\mcY\}, \label{mmax-D}\\
&\H_{\rm low}=\min\left\{\H(x,y)|\right(x,y)\in\mcX \times\mcY\}, \label{ea-bnd} \\
&\hat h(x,y)=\h(x,y)-\sigma_x\|x\|^2/2+\sigma_y\|y\|^2/2,\label{ea-hatf}\\
&\cG(z,y)=\sup_{x}\{\langle x,z\rangle-\np(x)-\hat h(x,y)+\nq(y)\},\label{ea-G}\\
&\cP(z,y)=\sigma_x^{-1}\|z\|^2/2+\sigma_y\|y\|^2/2+\cG(z,y),\label{ea-P}\\
&\vartheta_k=\eta_z^{-1}\|z^k-z^*\|^2+\eta_y^{-1}\|y^k-y^*\|^2+2\bar\alpha^{-1}(\cP(z^k_f,y^k_f)-\cP(z^*,y^*)), \label{ea-L} \\
&a^k_x(x,y)=\nabla_x\hat h(x,y)+\sigma_x(x-\sigma_x^{-1}z^k_g)/2,\quad a^k_y(x,y)=-\nabla_y\hat h(x,y)+\sigma_y y+\sigma_x(y-y^k_g)/8,\notag
\end{align}
where $\bar \alpha=\min\left\{1,\sqrt{8\sigma_y/\sigma_x}\right\}$, $\eta_z=\sigma_x/2$, $\eta_y=\min\left\{1/(2\sigma_y),4/(\bar\alpha\sigma_x)\right\}$, and $y^k$, $y^k_f$, $y^k_g$, $z^k$, $z^k_f$ and $z^k_g$ are generated at iteration $k$ of Algorithm \ref{mmax-alg1} below. By Assumptions~\ref{a1} and~\ref{ea}, one can observe that $D_\bfx$, $D_\bfy$ and $\H_{\rm low}$ are finite.

We are now ready to present a modified optimal first-order method for solving \eqref{ea-prob} in Algorithm \ref{mmax-alg1}. It is a slight modification of the novel optimal first-order method \cite[Algorithm 4]{kovalev2022first} by incorporating a forward-backward splitting scheme and also a verifiable termination criterion (see steps 23-25 in Algorithm \ref{mmax-alg1}) in order to find an $\bar\epsilon$-primal-dual stationary point of \eqref{ea-prob} (see Definition \ref{def2}) for any prescribed tolerance $\bar\epsilon>0$. 

\begin{algorithm}[H]
\caption{A modified optimal first-order method for \eqref{ea-prob}}
\label{mmax-alg1}
\begin{algorithmic}[1]
\REQUIRE $\bar\epsilon>0$, $\bar z^0=z^0_f\in-\sigma_x\dom\,\np$,\footnote{} $\bar y^0=y^0_f\in\dom\,\nq$, $(z^0,y^0)=(\bar z^0, \bar y^0)$,  $\bar \alpha=\min\left\{1,\sqrt{8\sigma_y/\sigma_x}\right\}$, $\eta_z=\sigma_x/2$, $\eta_y=\min\left\{1/(2\sigma_y),4/(\bar \alpha\sigma_x)\right\}$, $\beta_t=2/(t+3)$, $\zeta=\left(2\sqrt{5}(1+8L_{\nabla\h}/\sigma_x)\right)^{-1}$, $\gamma_x=\gamma_y=8\sigma_x^{-1}$, and $\bar\zeta=\min\{\sigma_x,\sigma_y\}/L_{\nabla \h}^2$.
\FOR{$k=0,1,2,\ldots$}
\STATE $(z^k_g,y^k_g)=\bar \alpha(z^k,y^k)+(1-\bar \alpha)(z^k_f,y^k_f)$.
\STATE $(x^{k,-1},y^{k,-1})=(-\sigma_x^{-1}z^k_g,y^k_g)$.
\STATE $x^{k,0}=\prox_{\zeta\gamma_x\np}(x^{k,-1}-\zeta\gamma_x a^k_x(x^{k,-1},y^{k,-1}))$.
\STATE $y^{k,0}=\prox_{\zeta\gamma_y \nq}(y^{k,-1}-\zeta\gamma_y a^k_y(x^{k,-1},y^{k,-1}))$.
\STATE $b^{k,0}_x=\frac{1}{\zeta\gamma_x}(x^{k,-1}-\zeta\gamma_x a^k_x(x^{k,-1},y^{k,-1})-x^{k,0})$.
\STATE $b^{k,0}_y=\frac{1}{\zeta\gamma_y}(y^{k,-1}-\zeta\gamma_y a^k_y(x^{k,-1},y^{k,-1})-y^{k,0})$.
\STATE $t=0$.
\WHILE{\\ $\gamma_x\|a^k_x(x^{k,t},y^{k,t})+b^{k,t}_x\|^2+\gamma_y\|a^k_y(x^{k,t},y^{k,t})+b^{k,t}_y\|^2>\gamma_x^{-1}\|x^{k,t}-x^{k,-1}\|^2+\gamma_y^{-1}\|y^{k,t}-y^{k,-1}\|^2$\\~~}
\STATE $x^{k,t+1/2}=x^{k,t}+\beta_t(x^{k,0}-x^{k,t})-\zeta\gamma_x(a^k_x(x^{k,t},y^{k,t})+b^{k,t}_x)$.
\STATE $y^{k,t+1/2}=y^{k,t}+\beta_t(y^{k,0}-y^{k,t})-\zeta\gamma_y(a^k_y(x^{k,t},y^{k,t})+b^{k,t}_y)$.
\STATE $x^{k,t+1}=\prox_{\zeta\gamma_x \np}(x^{k,t}+\beta_t(x^{k,0}-x^{k,t})-\zeta\gamma_x a^k_x(x^{k,t+1/2},y^{k,t+1/2}))$.
\STATE $y^{k,t+1}=\prox_{\zeta\gamma_y \nq}(y^{k,t}+\beta_t(y^{k,0}-y^{k,t})-\zeta\gamma_y a^k_y(x^{k,t+1/2},y^{k,t+1/2}))$.
\STATE $b^{k,t+1}_x=\frac{1}{\zeta\gamma_x}(x^{k,t}+\beta_t(x^{k,0}-x^{k,t})-\zeta\gamma_x a^k_x(x^{k,t+1/2},y^{k,t+1/2})-x^{k,t+1})$.
\STATE $b^{k,t+1}_y=\frac{1}{\zeta\gamma_y}(y^{k,t}+\beta_t(y^{k,0}-y^{k,t})-\zeta\gamma_y a^k_y(x^{k,t+1/2},y^{k,t+1/2})-y^{k,t+1})$.
\STATE $t \leftarrow t+1$.
\ENDWHILE
\STATE $(x^{k+1}_f,y^{k+1}_f)=(x^{k,t},y^{k,t})$.
\STATE $(z^{k+1}_f,w^{k+1}_f)=(\nabla_x\hat h(x^{k+1}_f,y^{k+1}_f)+b^{k,t}_x,-\nabla_y\hat h(x^{k+1}_f,y^{k+1}_f)+b^{k,t}_y)$.
\STATE $z^{k+1}=z^k+\eta_z\sigma_x^{-1}(z^{k+1}_f-z^k)-\eta_z(x^{k+1}_f+\sigma_x^{-1}z^{k+1}_f)$.
\STATE $y^{k+1}=y^k+\eta_y\sigma_y(y^{k+1}_f-y^k)-\eta_y(w^{k+1}_f+\sigma_yy^{k+1}_f)$.
\STATE $x^{k+1}=-\sigma_x^{-1}z^{k+1}$.
\STATE $\tx^{k+1}=\prox_{\bar\zeta \np}(x^{k+1}-\bar\zeta\nabla_x\h(x^{k+1},y^{k+1}))$.
\STATE $\ty^{k+1}=\prox_{\bar\zeta \nq}(y^{k+1}+\bar\zeta\nabla_y\h(x^{k+1},y^{k+1}))$.
\STATE Terminate the algorithm and output $(\tx^{k+1},\ty^{k+1})$ if
\begin{equation}\label{ea-term}
\|\bar\zeta^{-1}(x^{k+1}-\tx^{k+1},\ty^{k+1}-y^{k+1})-(\nabla \h(x^{k+1},y^{k+1})-\nabla \h(\tx^{k+1},\ty^{k+1}))\|\leq\bar\epsilon.
\end{equation}
\ENDFOR
\end{algorithmic}							
\end{algorithm}
\footnotetext{For convenience, $-\sigma_x\dom\,\np$ stands for the set $\{-\sigma_x u|u\in\dom\,\np\}$.}

The following theorem presents \emph{iteration and operation complexity} of Algorithm~\ref{mmax-alg1} for finding an $\bar\epsilon$-primal-dual stationary point of problem \eqref{ea-prob}, whose proof is deferred to Subsection \ref{sec:proof2}.

\begin{thm}[{\bf Complexity of Algorithm \ref{mmax-alg1}}]\label{ea-prop}
Suppose that Assumptions~\ref{a1} and~\ref{ea} hold. Let $\H^*$, $D_\bfx$, $D_\bfy$, $\H_{\rm low}$, and $\vartheta_0$ be defined in \eqref{ea-prob}, \eqref{mmax-D}, \eqref{ea-bnd} and \eqref{ea-L}, $\sigma_x$, $\sigma_y$ and $L_{\nabla \h}$ be given in Assumption \ref{ea}, $\bar \alpha$, $\eta_y$, $\eta_z$, $\bar\epsilon$, $\bar\zeta$ be given in Algorithm~\ref{mmax-alg1}, and 
\begin{align}
\bar \delta=&\ (2+\bar \alpha^{-1})\sigma_x D_\bfx^2+\max\{2\sigma_y,\bar \alpha\sigma_x/4\}D_\bfy^2,\label{ea-tP}\\
\bar K=&\ \left\lceil\max\left\{\frac{2}{\bar \alpha},\frac{\bar \alpha\sigma_x}{4\sigma_y}\right\}\log\frac{4\max\{\eta_z\sigma_x^{-2},\eta_y\}\vartheta_0}{(\bar\zeta^{-1}+ L_{\nabla \h})^{-2}\bar\epsilon^2}\right\rceil_+, \label{ea-K} \\
\bar N=&\ \left\lceil\max\left\{2,\sqrt{\frac{\sigma_x}{2\sigma_y}}\right\}\log\frac{4\max\left\{1/(2\sigma_x),\min\left\{1/(2\sigma_y),4/(\bar \alpha\sigma_x)\right\}\right\}\left(\bar \delta+2\bar \alpha^{-1}\left(\H^*-\H_{\rm low}\right)\right)}{(L_{\nabla \h}^2/\min\{\sigma_x,\sigma_y\}+ L_{\nabla \h})^{-2}\bar\epsilon^2}\right\rceil_+\notag\\
&\ \times\ \left(\left\lceil96\sqrt{2}\left(1+8L_{\nabla \h}\sigma_x^{-1}\right)\right\rceil+2\right).\label{ea-N}
\end{align}
Then Algorithm~\ref{mmax-alg1} outputs an $\bar\epsilon$-primal-dual stationary point of \eqref{ea-prob} in at most $\bar K$ iterations. Moreover, the total number of evaluations of $\nabla \h$ and proximal operator of $\np$ and $\nq$ performed in Algorithm~\ref{mmax-alg1} is no more than $\bar N$, respectively.
\end{thm}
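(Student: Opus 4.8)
The plan is to split the analysis into an \emph{outer} layer (the accelerated updates in steps 2--22, a modification of \cite[Algorithm 4]{kovalev2022first}) and an \emph{inner} layer (the forward--backward loop in steps 9--17), linking them through the Lyapunov function $\vartheta_k$ in \eqref{ea-L}. First I would record the reformulation: with $z^k=-\sigma_x x^k$ and $\hat h,\cG,\cP$ as in \eqref{ea-hatf}--\eqref{ea-P}, the pair $(z^*,y^*)$ is the unique minimizer of $\cP$, so every summand of $\vartheta_k$ is nonnegative and, in particular, $\|z^k-z^*\|^2\le\eta_z\vartheta_k$ and $\|y^k-y^*\|^2\le\eta_y\vartheta_k$. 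I would also derive the a priori estimate $\vartheta_0\le\bar\delta+2\bar\alpha^{-1}(\H^*-\H_{\rm low})$ from the initialization $z^0=z^0_f\in-\sigma_x\mcX$, $y^0=y^0_f\in\mcY$, the diameters \eqref{mmax-D} and the bound \eqref{ea-bnd}; this is the \emph{only} difference between the logarithmic arguments of $\bar K$ and of the first factor of $\bar N$, since a direct case check gives the identity $\max\{2/\bar\alpha,\bar\alpha\sigma_x/(4\sigma_y)\}=\max\{2,\sqrt{\sigma_x/(2\sigma_y)}\}$, so the two leading coefficients coincide.

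The core outer estimate is a one-step contraction $\vartheta_{k+1}\le\big(1-\min\{\bar\alpha/2,\,4\sigma_y/(\bar\alpha\sigma_x)\}\big)\vartheta_k$, obtained by transcribing the Lyapunov argument of \cite{kovalev2022first}. The new ingredient is that $(x^{k+1}_f,y^{k+1}_f)$ solves the strongly monotone subproblem with operator $(a^k_x,a^k_y)$ only inexactly; I would show that the WHILE exit test in step 9 is precisely a relative-error certificate that lets the inexact output feed Kovalev's descent inequality with the same geometric rate, the residuals $b^{k,t}_x,b^{k,t}_y$ playing the role of the exact subgradients of $\np,\nq$ at the computed point. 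Iterating the contraction and invoking the a priori bound yields $\vartheta_k\le\vartheta_0\exp\!\big(-k/\max\{2/\bar\alpha,\bar\alpha\sigma_x/(4\sigma_y)\}\big)$.

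Next I would convert Lyapunov smallness into the termination test \eqref{ea-term}. Writing $R(x,y)$ for the vector in \eqref{ea-term}, the prox optimality conditions defining $\tx^{k+1},\ty^{k+1}$ in steps 23--24 show that its $x$- and $y$-blocks lie in $\partial_x\H(\tx^{k+1},\ty^{k+1})$ and $\partial_y\H(\tx^{k+1},\ty^{k+1})$, respectively; hence passing \eqref{ea-term} certifies an $\bar\epsilon$-primal-dual stationary point, validating the output. Since $R(x^*,y^*)=0$, the triangle inequality and $L_{\nabla\h}$-smoothness give $\|R(x,y)\|\le(\bar\zeta^{-1}+L_{\nabla\h})\|(x,y)-(\tx,\ty)\|$, while nonexpansiveness of the prox maps and the fixed-point identity $(\tx^*,\ty^*)=(x^*,y^*)$ control $(x,y)-(\tx,\ty)$ by the distance of $(x,y)$ to $(x^*,y^*)$; together with $x^{k+1}=-\sigma_x^{-1}z^{k+1}$ and the two distance bounds above this yields an estimate of the form $\|R(x^{k+1},y^{k+1})\|^2\le 4\max\{\eta_z\sigma_x^{-2},\eta_y\}(\bar\zeta^{-1}+L_{\nabla\h})^2\vartheta_{k+1}$. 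Requiring the right-hand side to be $\le\bar\epsilon^2$ and substituting the geometric decay of $\vartheta_k$ produces exactly the iteration bound $\bar K$ in \eqref{ea-K}.

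Finally, for the operation complexity I would bound the inner work. The operator $(a^k_x,a^k_y)$ is strongly monotone with modulus at least $\sigma_x/8$ (the regularizing quadratics supply it) and Lipschitz of order $L_{\nabla\h}$, so its condition number is of order $8L_{\nabla\h}/\sigma_x$; the WHILE loop is an accelerated forward--backward scheme for the associated inclusion with step $\zeta$ and momentum $\beta_t=2/(t+3)$, whose convergence rate forces the relative-error test of step 9 after at most $\lceil96\sqrt2(1+8L_{\nabla\h}\sigma_x^{-1})\rceil+2$ iterations, a bound independent of $\bar\epsilon$, each iteration costing $O(1)$ evaluations of $\nabla\h$, $\prox_\np$ and $\prox_\nq$. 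Multiplying this per-outer-iteration cost by the a priori outer bound (whose leading coefficient matches that of $\bar K$ by the identity above) gives $\bar N$ in \eqref{ea-N}. The main obstacle is the second paragraph: propagating the inner inexactness through Kovalev's Lyapunov analysis so that the step-9 test is simultaneously (i) strong enough to preserve the undamped rate $1-\min\{\bar\alpha/2,4\sigma_y/(\bar\alpha\sigma_x)\}$ and (ii) weak enough to be met within a fixed, $\bar\epsilon$-independent number of inner iterations; reconciling these two demands is exactly what pins down the particular constants $\zeta,\gamma_x,\gamma_y,\eta_z,\eta_y,\bar\alpha$.
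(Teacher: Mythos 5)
You take essentially the same route as the paper: it too combines the a priori estimate $\vartheta_0\le\bar\delta+2\bar\alpha^{-1}(\H^*-\H_{\rm low})$ (proved as a preliminary lemma, exactly as in your first paragraph, including the identity $\max\{2/\bar\alpha,\bar\alpha\sigma_x/(4\sigma_y)\}=\max\{2,\sqrt{\sigma_x/(2\sigma_y)}\}$) with geometric decay of the Lyapunov function, the contraction of the forward--backward step in steps 23--24, and the prox optimality conditions certifying that passing \eqref{ea-term} yields an $\bar\epsilon$-primal-dual stationary point---except that where you propose to re-derive the one-step contraction with inexactness propagation and the $\bar\epsilon$-independent inner-loop bound (your admitted main obstacles), the paper simply cites \cite[Theorem~3]{kovalev2022first} for $\|(x^{\bar K},y^{\bar K})-(x^*,y^*)\|\le(\bar\zeta^{-1}+L_{\nabla\h})^{-1}\bar\epsilon/2$, \cite[Corollary~2.5]{chen1997convergence} for the FBS contraction you obtain from prox nonexpansiveness, and \cite[Lemma~4]{kovalev2022first} for the inner count, which is legitimate since the main loop is unmodified Kovalev. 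One small bookkeeping correction: the inner WHILE loop runs at most $\lceil48\sqrt{2}(1+8L_{\nabla\h}\sigma_x^{-1})\rceil-1$ iterations per outer iteration, and the factor $\lceil96\sqrt{2}(1+8L_{\nabla\h}\sigma_x^{-1})\rceil+2$ appearing in \eqref{ea-N} bounds the number of oracle \emph{evaluations} per outer iteration (two per inner iteration plus per-outer-iteration overhead), not the number of inner iterations as you assert.
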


\begin{rem}
It can be observed from Theorem \ref{ea-prop} that Algorithm~\ref{mmax-alg1} enjoys an operation complexity of $\cO(\log(1/ \bar\epsilon))$, measured by the number of evaluations of $\nabla \h$ and proximal operator of $\np$ and $\nq$, for finding an $\bar\epsilon$-primal-dual stationary point of the  strongly-convex-strongly-concave minimax problem \eqref{ea-prob}.
\end{rem}

\subsection{A first-order method for problem \eqref{mmax-prob}} 
\label{ppa}

In this subsection, we propose a first-order method for finding an $\epsilon$-primal-dual stationary point of problem \eqref{mmax-prob} (see Definition \ref{def2}) for any prescribed tolerance $\epsilon>0$. In particular, we first add a perturbation to the max part of \eqref{mmax-prob} for obtaining an approximation of \eqref{mmax-prob}, which is given as follows:
\beq\label{mmax-prob-approx}
 \min_x\max_y\left\{\bh(x,y)+\np(x)-\nq(y)-\frac{\epsilon}{4D_\bfy}\|y-\hat y^0\|^2\right\}
\eeq
for some $\hat y^0\in\dom\,\nq$, where $D_\bfy$ is given in \eqref{mmax-D}. We then apply an inexact proximal point method \cite{kaplan1998proximal} to \eqref{mmax-prob-approx}, which consists of approximately solving a sequence of subproblems
\beq\label{ppa-subprob}
 \min_x\max_y\left\{\bH_k(x,y):=\bh_k(x,y)+\np(x)-\nq(y)\right\}, 
 \eeq
 where 
 \begin{equation}\label{mmax-sub}
\bh_k(x,y)=\bh(x,y)-\epsilon\|y-\hat y^0\|^2/(4D_\bfy)+L_{\nabla \bh}\|x-x^k\|^2.
\end{equation}
By Assumption \ref{mmax-a}, one can observe that (i) $\bh_k$ is $L_{\nabla \bh}$-strongly convex in $x$ and $\epsilon/(2D_\bfy)$-strongly concave in $y$ on $\dom\,\np\times\dom\,\nq$; (ii) $\bh_k$ is $(3L_{\nabla \bh}+\epsilon/(2D_\bfy))$-smooth on $\dom\,\np\times\dom\,\nq$. Consequently, problem \eqref{ppa-subprob} is a special case of \eqref{ea-prob} and can be suitably solved by Algorithm~\ref{mmax-alg1}. The resulting first-order method for \eqref{mmax-prob} is presented in Algorithm \ref{mmax-alg2}.

\begin{algorithm}[H]
\caption{A first-order method for problem~\eqref{mmax-prob}}
\label{mmax-alg2}
\begin{algorithmic}[1]
\REQUIRE $\epsilon>0$, $\hat\epsilon_0\in(0,\epsilon/2]$, 
$(\hat x^0,\hat y^0)\in\dom\,\np\times\dom\,\nq$, $(x^0,y^0)=(\hat x^0,\hat y^0)$, 
and $\hat\epsilon_k=\hat\epsilon_0/(k+1)$.
\FOR{$k=0,1,2,\ldots$}
\STATE Call Algorithm~\ref{mmax-alg1} with $\h\leftarrow \bh_k$, $\bar\epsilon \leftarrow \hat\epsilon_k$, $\sigma_x\leftarrow L_{\nabla \bh}$, $\sigma_y\leftarrow \epsilon/(2D_\bfy)$, $L_{\nabla \h}\leftarrow 3L_{\nabla \bh}+\epsilon/(2D_\bfy)$, $\bar z^0=z^0_f\leftarrow-\sigma_x x^k$, $\bar y^0=y^0_f\leftarrow y^k$, and denote its output by $(x^{k+1},y^{k+1})$, where $\bh_k$ is given in \eqref{mmax-sub}.
\STATE Terminate the algorithm and output $(\xe,\ye)=(x^{k+1},y^{k+1})$ if
\begin{equation}\label{mmax-term}
\|x^{k+1}-x^k\|\leq\epsilon/(4L_{\nabla \bh}).
\end{equation}
\ENDFOR
\end{algorithmic}
\end{algorithm}

\begin{rem}\label{ppa-rem}
It is seen from step 2 of Algorithm~\ref{mmax-alg2} that $(x^{k+1},y^{k+1})$ results from applying Algorithm~\ref{mmax-alg1} to the subproblem \eqref{ppa-subprob}. As will be shown in Lemma~\ref{innercplx-alg6}, $(x^{k+1},y^{k+1})$ is an $\hat\epsilon_k$-primal-dual stationary point of \eqref{ppa-subprob}.   
\end{rem}

We next study complexity of Algorithm \ref{mmax-alg2} for finding an $\epsilon$-primal-dual stationary point of problem~\eqref{mmax-prob}. Before proceeding, we define
\beq
\bH_{\rm low}:=\min\left\{\bH(x,y)|(x,y)\in\dom\,\np\times\dom\,\nq\right\}.\label{mmax-bnd}
\eeq
By Assumption~\ref{a1}, one can observe that $\bH_{\rm low}$ is finite.

The following theorem presents \emph{iteration and operation complexity} of Algorithm~\ref{mmax-alg2} for finding an $\epsilon$-primal-dual stationary point of problem \eqref{mmax-prob}, whose proof is deferred to Subsection \ref{sec:proof2-2}.

\begin{thm}[{\bf Complexity of Algorithm \ref{mmax-alg2}}]\label{mmax-thm}
Suppose that Assumption~\ref{mmax-a} holds. Let $\bH^*$, $H$ $D_\bfx$, $D_\bfy$, and $\bH_{\rm low}$ be defined in \eqref{mmax-prob}, 
\eqref{mmax-D} and \eqref{mmax-bnd}, $L_{\nabla \bh}$ be given in Assumption \ref{mmax-a}, $\epsilon$, $\hat\epsilon_0$ and $\hat x^0$ be given in Algorithm~\ref{mmax-alg2}, and 
\begin{align}
\halpha=&\ \min\left\{1,\sqrt{4\epsilon/(D_\bfy L_{\nabla \bh})}\right\},\label{mmax-balpha}\\
\hdelta=&\ (2+\halpha^{-1})L_{\nabla \bh} D_\bfx^2+\max\left\{\epsilon/D_\bfy,\halpha L_{\nabla \bh}/4\right\}D_\bfy^2,\label{mmax-tP}\\
\wT=&\ \left\lceil16(\max_y\bH(\hat x^0,y)-\bH^*+\epsilon D_\bfy/4)L_{\nabla \bh}\epsilon^{-2}+32\hat\epsilon_0^2(1+4D_\bfy^2L_{\nabla \bh}^2\epsilon^{-2})\epsilon^{-2}-1\right\rceil_+,\label{mmax-K}\\
\widehat N=&\ \left(\left\lceil96\sqrt{2}\left(1+\left(24L_{\nabla \bh}+4\epsilon/D_\bfy\right)L_{\nabla \bh}^{-1}\right)\right\rceil+2\right)\max\left\{2,\sqrt{D_\bfy L_{\nabla \bh}\epsilon^{-1}}\right\}\notag\\
&\ \times\Bigg((\wT+1)\Bigg(\log\frac{4\max\left\{\frac{1}{2L_{\nabla \bh}},\min\left\{\frac{D_\bfy}{\epsilon},\frac{4}{\halpha L_{\nabla \bh}}\right\}\right\}\left(\hdelta+2\halpha^{-1}(\bH^*-\bH_{\rm low}+\epsilon D_\bfy/4+L_{\nabla \bh} D_\bfx^2)\right)}{\left[(3L_{\nabla \bh}+\epsilon/(2D_\bfy))^2/\min\{L_{\nabla \bh},\epsilon/(2D_\bfy)\}+ 3L_{\nabla \bh}+\epsilon/(2D_\bfy)\right]^{-2}\hat\epsilon_0^2}\Bigg)_+\notag\\
&\ +\wT+1+2\wT\log(\wT+1) \Bigg).\label{mmax-N-old}
\end{align}
Then Algorithm~\ref{mmax-alg2} terminates and outputs an $\epsilon$-primal-dual stationary point $(\xe,\ye)$ of \eqref{mmax-prob} in at most $\wT+1$ outer iterations that satisfies 
\begin{equation}\label{upperbnd-old}
\max_y\bH(\xe,y)\leq \max_y\bH(\hat x^0,y)+\epsilon D_\bfy/4+2\hat\epsilon_0^2\left(L_{\nabla \bh}^{-1}+4D_\bfy^2L_{\nabla \bh}\epsilon^{-2}\right).
\end{equation}
Moreover, the total number of evaluations of $\nabla \bh$ and proximal operator of $p$ and $q$ performed in Algorithm~\ref{mmax-alg2} is no more than $\widehat N$, respectively.
\end{thm}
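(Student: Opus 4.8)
The plan is to read Algorithm~\ref{mmax-alg2} as an inexact proximal point method applied to the perturbed primal function
\[
\Phi(x):=\max_y\Big\{\bh(x,y)+\np(x)-\nq(y)-\tfrac{\epsilon}{4D_\bfy}\|y-\hat y^0\|^2\Big\},
\]
and to track its decrease across the outer iterations. First I would record, via Lemma~\ref{innercplx-alg6} (cf. Remark~\ref{ppa-rem}), that each call to Algorithm~\ref{mmax-alg1} returns a point $(x^{k+1},y^{k+1})$ that is an $\hat\epsilon_k$-primal-dual stationary point of the subproblem~\eqref{ppa-subprob}. From \eqref{mmax-sub} and Assumption~\ref{mmax-a} I would observe that this subproblem is $L_{\nabla\bh}$-strongly convex in $x$ and $\epsilon/(2D_\bfy)$-strongly concave in $y$, and that its primal function is exactly $g_k(\cdot):=\Phi(\cdot)+L_{\nabla\bh}\|\cdot-x^k\|^2$, which is therefore $L_{\nabla\bh}$-strongly convex with minimizer $\hat x^{k+1}$.

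The core is an approximate descent inequality
\[
\Phi(x^{k+1})\le\Phi(x^k)-L_{\nabla\bh}\|x^{k+1}-x^k\|^2+\tfrac{1}{2L_{\nabla\bh}}s_k^2 ,
\]
obtained by combining the quadratic-growth (PL) bound $g_k(x^{k+1})-g_k(\hat x^{k+1})\le s_k^2/(2L_{\nabla\bh})$ with $g_k(\hat x^{k+1})\le g_k(x^k)=\Phi(x^k)$. The main work is to bound $s_k:=\dist\big(0,\partial g_k(x^{k+1})\big)$ by $\hat\epsilon_k(1+2D_\bfy L_{\nabla\bh}/\epsilon)$: the $x$-part of the $\hat\epsilon_k$-stationarity supplies a subgradient of norm $\le\hat\epsilon_k$ evaluated at $y^{k+1}$, the $y$-part together with $\epsilon/(2D_\bfy)$-strong concavity bounds $\|y^{k+1}-y^*(x^{k+1})\|\le 2D_\bfy\hat\epsilon_k/\epsilon$ for the unique inner maximizer $y^*(x^{k+1})$, and the gap is closed through $L_{\nabla\bh}$-smoothness of $\bh$ and a Danskin-type identity for $\partial\Phi$. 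I expect this interplay between the two stationarity residuals, the strong concavity in $y$, and the envelope representation of $\partial\Phi$ to be the main obstacle.

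Granting the descent inequality, I would bound $\Phi(x^0)=\Phi(\hat x^0)\le\max_y\bH(\hat x^0,y)$ and $\Phi(x)\ge\bH^*-\epsilon D_\bfy/4$ (dropping, resp. crudely bounding, the quadratic perturbation on $\mcY$, of diameter $D_\bfy$). Summing over $k$, using $s_k^2\le2\hat\epsilon_k^2(1+4D_\bfy^2L_{\nabla\bh}^2/\epsilon^2)$ and $\sum_{k\ge0}\hat\epsilon_k^2=\hat\epsilon_0^2\sum_{k\ge0}(k+1)^{-2}<2\hat\epsilon_0^2$, a pigeonhole argument gives $\min_{0\le k\le\wT}\|x^{k+1}-x^k\|^2\le\epsilon^2/(16L_{\nabla\bh}^2)$, so the test \eqref{mmax-term} fires within $\wT+1$ outer iterations with $\wT$ as in \eqref{mmax-K}; telescoping the same inequality and adding back $\epsilon D_\bfy/4$ yields \eqref{upperbnd-old}. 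To certify the output I would use $\partial_x\bH_k=\partial_x\bH+2L_{\nabla\bh}(x^{k+1}-x^k)$ and $\partial_y\bH_k=\partial_y\bH-\tfrac{\epsilon}{2D_\bfy}(y^{k+1}-\hat y^0)$, so that the $\hat\epsilon_k$-stationarity, the test \eqref{mmax-term}, $\hat\epsilon_k\le\hat\epsilon_0\le\epsilon/2$, and $\|y^{k+1}-\hat y^0\|\le D_\bfy$ together give $\dist(0,\partial_x\bH)\le\epsilon$ and $\dist(0,\partial_y\bH)\le\epsilon$ at $(\xe,\ye)$, i.e. an $\epsilon$-primal-dual stationary point of \eqref{mmax-prob} per Definition~\ref{def2}.

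Finally, for the operation complexity I would invoke Theorem~\ref{ea-prop} under the substitutions in step~2 of Algorithm~\ref{mmax-alg2}, checking that $\bar\alpha$ and $\bar\delta$ reduce to $\halpha$ and $\hdelta$ of \eqref{mmax-balpha}--\eqref{mmax-tP}, and that the subproblem range $\H^*-\H_{\rm low}$ is controlled by $\bH^*-\bH_{\rm low}+\epsilon D_\bfy/4+L_{\nabla\bh}D_\bfx^2$ (upper-bounding $\bH_k^*$ at the minimizer of \eqref{mmax-prob} and lower-bounding $(\bH_k)_{\rm low}$ on $\mcX\times\mcY$). Writing the per-call bound $\bar N_k$ and using $\hat\epsilon_k=\hat\epsilon_0/(k+1)$ to split $\log(A/\hat\epsilon_k^2)=\log(A/\hat\epsilon_0^2)+2\log(k+1)$, I would sum over $k=0,\dots,\wT$, bound $\sum_{k=0}^{\wT}\log(k+1)\le\wT\log(\wT+1)$, and absorb the $\wT+1$ ceilings, which reproduces the bracketed factor and hence $\widehat N$ in \eqref{mmax-N-old}.
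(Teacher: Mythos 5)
Your proposal is correct and follows essentially the same route as the paper's proof: an inexact proximal-point descent argument on the perturbed value function $\hh(x)=\max_y\{\bH(x,y)-\epsilon\|y-\hat y^0\|^2/(4D_\bfy)\}$ (your $\Phi$), the same Danskin-type subgradient identity for $\partial H^*_k(x^{k+1})$ combined with the strong-concavity bound $\|y^{k+1}-y^{k+1}_*\|\le 2D_\bfy\hat\epsilon_k/\epsilon$, the same pigeonhole/termination and stationarity certification via \eqref{mmax-term} and $\hat\epsilon_k\le\hat\epsilon_0\le\epsilon/2$, and the same summation of the per-call bounds from Theorem~\ref{ea-prop} with the splitting $\log(A/\hat\epsilon_k^2)=\log(A/\hat\epsilon_0^2)+2\log(k+1)$. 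The only deviation is cosmetic: you derive the descent inequality from the quadratic-growth bound at the subproblem minimizer $\hat x^{k+1}$, whereas the paper applies $L_{\nabla\bh}$-strong convexity of $H^*_k$ directly between $x^{k+1}$ and $x^k$ with Cauchy--Schwarz and Young; both yield the identical error term $(L_{\nabla\bh}^{-1}+4D_\bfy^2L_{\nabla\bh}\epsilon^{-2})\hat\epsilon_k^2$.
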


\begin{rem}
Since $\hat\epsilon_0\in(0,\epsilon/2]$, one can observe from Theorem \ref{mmax-thm} that $\halpha=\cO(\epsilon^{1/2})$, $\hdelta=\cO(\epsilon^{-1/2})$, $\wT=\cO(\epsilon^{-2})$, and $\widehat N=\cO(\epsilon^{-5/2}\log(\hat\epsilon_0^{-1}\epsilon^{-1}))$. Consequently,  Algorithm~\ref{mmax-alg2} enjoys an operation complexity  of $\cO(\epsilon^{-5/2}\log(\hat\epsilon_0^{-1} \epsilon^{-1}))$, measured by the number of evaluations of $\nabla \bh$ and proximal operator of $p$ and $q$, for finding an $\epsilon$-primal-dual stationary point of the nonconvex-concave minimax problem \eqref{mmax-prob}.
\end{rem}

\section{A first-order augmented Lagrangian method for problem \eqref{prob}}\label{sec:main}

In this section, we propose a first-order augmented Lagrangian (FAL) method for problem \eqref{prob}, and study its complexity for finding an approximate KKT point of \eqref{prob}.

One standard approach for solving constrained nonlinear program is to solve a sequence of unconstrained nonlinear program problems, which are typically penalty or augmented Lagrangian subproblems (e.g., see \cite{nocedal1999numerical}). In a similar spirit,  we next propose an FAL method in Algorithm \ref{AL-alg} for solving \eqref{prob}. In particular, at each iteration, the FAL method finds an approximate primal-dual stationary point of an AL subproblem in the form of
\beq\label{AL-sub0}
\min_x\max_y\AL(x,y,\lambda_\bfx,\lambda_\bfy;\rho),
\eeq
where $\AL$ is the AL function associated with problem \eqref{prob} defined in \eqref{AL}, $\lambda_\bfx\in\bR_+^{\tn}$ and $\lambda_\bfy\in\bR_+^{\tm}$ are a Lagrangian multiplier estimate, and $\rho>0$ is a penalty parameter, which are updated by a standard scheme.
In view of Assumption \ref{a1}, one can observe that $\AL$ enjoys the following nice structure.
\bi
\item 
For any given $\rho>0$, $\lambda_\bfx\in\bR_+^{\tn}$ and $\lambda_\bfy\in\bR_+^{\tm}$, 
$\AL$ is the sum of smooth function $f(x,y)+\left(\|[\lambda_\bfx+\rho c(x)]_+\|^2-\|\lambda_\bfx\|^2\right)/(2\rho)-\left(\|[\lambda_\bfy+\rho d(x,y)]_+\|^2-\|\lambda_\bfy\|^2\right)/(2\rho)$ with Lipschitz continuous gradient and possibly nonsmooth function $p(x)-q(y)$ with exactly computable proximal operator.
\item $\AL$ is nonconvex in $x$ but concave in $y$.
\ei
Thanks to the above nice structure of $\AL$,  we will use Algorithm \ref{mmax-alg2} as a solver to find an approximate primal-dual stationary point of the AL subproblem \eqref{AL-sub0}.

Recall that $\mcX=\dom\,p$ and $\mcY=\dom\,q$. Before presenting an FAL method for \eqref{prob}, we let
\begin{align}
&\AL_\bfx(x,y,\lambda_\bfx;\rho):=F(x,y)+\frac{1}{2\rho}\left(\|[\lambda_\bfx+\rho c(x)]_+\|^2-\|\lambda_\bfx\|^2\right), \label{x-AL} \\
&c_{\rm hi}:=\max\{\|c(x)\|\big|x\in\mcX\},\quad d_{\rm hi}:=\max\{\|d(x,y)\|\big|(x,y)\in\mcX\times\mcY\}, \label{cdhi} 
\end{align}
where $\AL_\bfx(\cdot,y,\lambda_\bfx;\rho)$ can be viewed as the AL function for the minimization part of \eqref{prob}, namely, the problem $\min_x \{F(x,y)| c(x)\leq 0\}$ for any $y\in\mcY$. Besides, we make one additional assumption below regarding the availability of a nearly feasible point for the minimization part of \eqref{prob}. Due to the possible nonconvexity of $c_i$'s, it will be used to specify an initial point for solving the AL subproblems (see step 2 of Algorithm \ref{AL-alg}) so that the resulting FAL method outputs an approximate KKT point of  \eqref{prob} nearly satisfying the constraint $c(x) \leq 0$.

\begin{assumption}\label{knownfeas}		
For any given $\varepsilon\in (0,1)$, a $\sqrt{\varepsilon}$-nearly feasible point $x_\bff$ of problem~\eqref{prob}, namely $x_\bff\in\mcX$ satisfying $\|[c(x_\bff)]_+\|\le\sqrt{\varepsilon}$, can be found.
\end{assumption}

\begin{rem}
A very similar assumption as Assumption~\ref{knownfeas} was considered in \cite{CGLY17,GY19,LZ12,XW19}. In addition,  when the error bound condition $\|[c(x)]_+\|=\cO(\dist(0,\partial (\|[c(x)]_+\|^2+\delta_{\mcX}(x))))^\nu)$ holds on a level set of $\|[c(x)]_+\|$ for some $\nu>0$, Assumption~\ref{knownfeas} holds for problem~\eqref{prob} (e.g., see \cite{lu2022single,S19iAL}). In this case, one can find the above $x_\bff$ by applying a projected gradient method to the problem $\min_{x\in\mcX}\|[c(x)]_+\|^2$.
\end{rem}

 We are now ready to present an FAL method for solving  problem \eqref{prob}.
 
\begin{algorithm}[H]
\caption{A first-order augmented Lagrangian method for problem \eqref{prob}}\label{AL-alg}
\begin{algorithmic}[1]
\REQUIRE $\varepsilon, \tau\in(0,1)$,  $\epsilon_k=\tau^k$, $\rho_k=\epsilon_k^{-1}$, $\Lambda>0$, $\lambda_\bfx^0\in\cB^+_\Lambda$, $\lambda_\bfy^0\in\bR_+^{\tm}$, $(x^0,y^0)\in\dom\,p\times\dom\,q$, 
and $x_\bff\in\dom\,p$ with $\|[c(x_\bff)]_+\|\leq\sqrt{\varepsilon}$ (see Assumption \ref{knownfeas}).	
\FOR{$k=0,1,\dots$}
\STATE Set
\begin{align} \label{xinit}
x^k_{\rm init}=\left\{\begin{array}{ll}
x^k,&\quad \mbox{if }\AL_\bfx(x^k,y^k,\lambda^k_\bfx;\rho_k)\leq\AL_\bfx(x_\bff,y^k,\lambda^k_\bfx;\rho_k),\\
x_\bff,&\quad \mbox{otherwise.}
\end{array}\right.
\end{align}
\STATE Call Algorithm \ref{mmax-alg2} with $\epsilon\leftarrow\epsilon_k$, $\hat\epsilon_0\leftarrow\epsilon_k/(2\sqrt{\rho_k})$, $(x^0,y^0)\leftarrow (x^k_{\rm init},y^k)$ and $L_{\nabla h}\leftarrow L_k$ to find an $\epsilon_k$-primal-dual stationary point $(x^{k+1},y^{k+1})$ of 
\beq\label{AL-sub}
\min_x\max_y\AL(x,y,\lambda^k_\bfx,\lambda^k_\bfy;\rho_k)
\eeq
 where
\beq\label{Lk}
L_k= L_{\nabla f}+\rho_kL_c^2+\rho_kc_{\rm hi}L_{\nabla c}+\|\lambda^k_\bfx\|L_{\nabla c}+\rho_kL_d^2+\rho_kd_{\rm hi}L_{\nabla d}+\|\lambda^k_\bfy\|L_{\nabla d}.
\eeq
\STATE Set $\lambda_\bfx^{k+1}=\Pi_{\cB^+_\Lambda}(\lambda^k_\bfx+\rho_kc(x^{k+1}))$ and $\lambda^{k+1}_\bfy=[\lambda^k_\bfy+\rho_kd(x^{k+1},y^{k+1})]_+$.
\STATE If $\epsilon_k\leq\varepsilon$, terminate the algorithm and output $(x^{k+1},y^{k+1})$. 
\ENDFOR
\end{algorithmic}
\end{algorithm}
\begin{rem}
\bi
\item[(i)] $\lambda^{k+1}_\bfx$ results from projecting onto a nonnegative Euclidean ball the standard Lagrangian multiplier estimate $\tilde\lambda_\bfx^{k+1}$ obtained by the classical scheme $\tilde\lambda_\bfx^{k+1}=[\lambda^k_\bfx+\rho_kc(x^{k+1})]_+$. It is called a safeguarded Lagrangian multiplier in the relevant literature \cite{BM14,BM20,KS17example}, which has been shown to enjoy many practical and theoretical advantages (see \cite{BM14} for discussions).
\item[(ii)] In view of Theorem \ref{mmax-thm}, one can see that an $\epsilon_k$-primal-dual stationary point of \eqref{AL-sub} can be successfully found in step 3 of Algorithm~\ref{AL-alg} by applying Algorithm \ref{mmax-alg2} to problem \eqref{AL-sub}. Consequently, Algorithm~\ref{AL-alg} is well-defined.
\ei
\end{rem}

\subsection{Complexity results for Algorithm \ref{AL-alg}}\label{sec:complexity}

In this subsection we study iteration and operation complexity for Algorithm \ref{AL-alg}. Recall that $\mcX=\dom\,p$ and $\mcY=\dom\,q$. 
Before proceeding, we make one additional assumption below that a generalized Mangasarian-Fromowitz constraint qualification (GMFCQ) holds for the minimization part of \eqref{prob}, a uniform Slater's condition holds for the maximization part of \eqref{prob}, and $F(\cdot,y)$ is Lipschitz continuous on $\mcX$ for any $y\in\mcY$. Specifically, GMFCQ and the Lipschitz continuity of $F(\cdot,y)$ will be used to bound the amount of violation on feasibility and complementary slackness by $(x^{k+1}, \tl^{k+1}_\bfx)$ for the minimization part of \eqref{prob} with $\tl^{k+1}_\bfx=[\lambda^k_\bfx+\rho_kc(x^{k+1})]_+$ (see Lemma \ref{l-xcnstr2}). Likewise, the uniform Slater's condition will be used to bound the amount of violation on feasibility and complementary slackness by $(x^{k+1}, y^{k+1},\lambda^{k+1}_\bfy)$ for the maximization part of \eqref{prob} (see Lemmas \ref{l-ycnstr} and \ref{l-subdcnstr}). 

\begin{assumption}\label{mfcq}
\bi
\item[(i)] There exist some constants $\delta_c$, $\theta>0$ such that for each $x\in\cF(\theta)$ there exists some $v_x \in\mcT_{\mcX}(x)$  satisfying $\|v_x\|=1$ and $v^T_x\nabla c_i(x)\leq-\delta_c$ for all $i\in\cA(x;\theta)$, where $\mcT_{\mcX}(x)$ is the tangent cone of $\mcX$ at $x$, and
\beq\label{def-cAcS}
\cF(\theta)=\{x\in\mcX\big|\|[c(x)]_+\|_\infty \leq\theta\},\quad\cA(x;\theta)=\{i|c_i(x)\geq-\theta,\ 1\leq i\leq \tn\}.
\eeq
\item[(ii)] For each $x\in\mcX$, there exists some $\hat y_x\in\mcY$ such that $d_i(x,\hat y_x)<0$ for all $i=1,2,\dots,\tm$, and moreover, $\delta_d:=\inf\{-d_i(x,\hat y_x)|x\in\mcX,\ i=1,2,\dots,\tm\}>0$.
\item[(iii)]  $F(\cdot,y)$ is $L_F$-Lipschitz continuous on $\mcX$ for any $y\in\mcY$.
\ei
\end{assumption}
\begin{rem}
\bi
\item[(i)] Assumption \ref{mfcq}(i) can be viewed as a robust counterpart of MFCQ.  It implies that MFCQ holds for all the minimization problems, resulting from the minimization part of \eqref{prob} by fixing  $y\in\mcY$ and perturbing $c_i(x)$ at most by $\theta$.
\item[(ii)] The latter part of Assumption \ref{mfcq}(ii) can be weakened to the one that the pointwise Slater's condition holds for the constraint on $y$ in \eqref{prob}, that is, there exists $\hat y_x\in\mcY$ such that $d(x,\hat y_x)<0$ for each $x\in\mcX$. Indeed, if $\delta_d>0$, Assumption \ref{mfcq}(ii) holds. Otherwise, one can solve the perturbed counterpart of \eqref{prob} with $d(x,y)$ being replaced by $d(x,y)-\epsilon$ for some suitable $\epsilon>0$ instead, which satisfies Assumption \ref{mfcq}(ii).
\item[(iii)] In view of Assumption \ref{a1},  one can observe that  if $p$ is Lipschitz continuous on $\mcX$, $F(\cdot,y)$ is Lipschitz continuous on $\mcX$ for any $y\in\mcY$.  Thus, Assumption \ref{mfcq}(iii) is mild.
\ei
\end{rem}
In order to characterize the approximate solution found by Algorithm \ref{AL-alg}, we next introduce a notion called an $\varepsilon$-KKT solution of problem \eqref{prob}.

One can observe from Lemma \ref{dual-bnd}(iii) in Subsection \ref{sec4.3} that problem \eqref{prob} is equivalent to
\[
\min_{x,\lambda_\bfy}\big\{\max_y F(x,y)-\langle\lambda_\bfy,d(x,y)\rangle+\delta_{\bR^{\tm}_+}(\lambda_\bfy) \big|c(x)\leq0\big\}.
\]
By this, one can further see that problem \eqref{prob} is equivalent to
\[
\min_{x,\lambda_\bfy}\max_{\lambda_\bfx}\Big\{\max_y \{F(x,y)-\langle\lambda_\bfy,d(x,y)\rangle+\delta_{\bR^{\tm}_+}(\lambda_\bfy)\} +\langle\lambda_\bfx,c(x)\rangle-\delta_{\bR^{\tn}_+}(\lambda_\bfx)\Big\},
\]
which is a nonconvex-concave minimax problem
\beq \label{prob-ref}
\min_{x,\lambda_\bfy}\max_{y,\lambda_\bfx} \left\{F(x,y)+\langle\lambda_\bfx,c(x)\rangle-\langle\lambda_\bfy,d(x,y)\rangle -\delta_{\bR^{\tn}_+}(\lambda_\bfx)+\delta_{\bR^{\tm}_+}(\lambda_\bfy)\right\}.
\eeq
 It follows from \cite[Theorem 3.1]{dai2020optimality} that if $(x,y,\lambda_\bfx,\lambda_\bfy)\in\bR^n \times\bR^m\times\bR^{\tn}_+\times\bR^{\tm}_+$ is a local minimax point of problem \eqref{prob-ref}, then it must also be a primal-dual stationary point of \eqref{prob-ref}. This, combined with Definition \ref{def2}, implies that  $(x,y,\lambda_\bfx,\lambda_\bfy)$ is a KKT point of \eqref{prob-ref} satisfying the conditions:
\begin{align}
& 0\in \partial_x F(x,y)+\nabla c(x)\lambda_\bfx-\nabla_x d(x,y) \lambda_\bfy,  \label{kkt1} \\
& 0\in \partial_y F(x,y)-\nabla_y d(x,y) \lambda_\bfy, \label{kkt2} \\
& c(x) \leq 0, \quad \langle \lambda_\bfx, c(x) \rangle=0, \label{kkt3} \\
& d(x,y) \leq 0, \quad \langle \lambda_\bfy, d(x,y) \rangle=0. \label{kkt4}
\end{align}
Based on this observation and the equivalence of \eqref{prob} and \eqref{prob-ref}, 
we introduce an (approximate) KKT solution for problem \eqref{prob} below.

\begin{defi} \label{approx-kkt-pt}
The pair $(x,y)$ is said to be a KKT solution of problem \eqref{prob} if there exists $(\lambda_\bfx,\lambda_\bfy)\in\bR^{\tn}_+\times\bR^{\tm}_+$ such that the conditions \eqref{kkt1}-\eqref{kkt4} hold. In addition, for any $\varepsilon>0$, $(x,y)$ is said to be an $\varepsilon$-KKT point of problem \eqref{prob} if there exists $(\lambda_\bfx,\lambda_\bfy)\in\bR^{\tn}_+\times\bR^{\tm}_+$ such that
\begin{align*}
& \dist(0, \partial_x F(x,y)+\nabla c(x)\lambda_\bfx-\nabla_x d(x,y) \lambda_\bfy) \leq \varepsilon,   \\
& \dist(0, \partial_y F(x,y)-\nabla_y d(x,y) \lambda_\bfy) \leq \varepsilon,   \\
& \|[c(x)]_+\| \leq  \varepsilon, \quad |\langle \lambda_\bfx, c(x) \rangle|\leq \varepsilon,  \\
& \|[d(x,y)]_+\| \leq  \varepsilon, \quad |\langle \lambda_\bfy, d(x,y) \rangle| \leq \varepsilon. 
\end{align*}
\end{defi} 

Recall that $\mcX=\dom\,p$ and $\mcY=\dom\,q$. To study complexity of Algorithm \ref{AL-alg}, we define
\begin{align}
&f^*(x):=\max\{F(x,y)|d(x,y)\leq0\},\label{fstarx}\\
&F_{\rm hi}:=\max\{F(x,y)|(x,y)\in\mcX\times\mcY\},\quad F_{\rm low}:=\min\{F(x,y)|(x,y)\in\mcX\times\mcY\},\label{Fhi}\\
&\Delta:=F_{\rm hi}-F_{\rm low}, \quad  r:=2\delta_d^{-1}\Delta,\label{def-r} \\
&K:=\left\lceil\log\varepsilon/\log\tau\right\rceil_+, \quad \bbK:=\{0,1,\ldots, K+1\}, \label{K1} 
\end{align}
where $\delta_d$ is given in Assumption \ref{mfcq}, and $\varepsilon$ and $\tau$ are some input parameters of Algorithm \ref{AL-alg}. For convenience, we define $\bbK-1=\{k-1| k\in\bbK\}$. One can observe from Assumption \ref{a1} that $F_{\rm hi}$ and $F_{\rm low}$ are finite. 
Besides, one can easily observe that 
\beq \label{F-gap}
f^*(x)\geq F_{\rm low}, \   F(x,y)-f^*(x)\leq \Delta \quad \forall x\in\mcX, y\in\mcY.
\eeq

We are now ready to present an \emph{iteration and operation complexity} of Algorithm~\ref{AL-alg} for finding an $\cO(\varepsilon)$-KKT solution of problem \eqref{prob}, whose proof is deferred to Section \ref{sec:proofs}.

\begin{thm}\label{complexity}
Suppose that Assumptions \ref{a1}, \ref{knownfeas} and \ref{mfcq} hold. Let $\{(x^k,y^k,\lambda^k_\bfx,\lambda^k_\bfy)\}_{k\in\bbK}$ be generated by Algorithm \ref{AL-alg}, $D_\bfx$, $D_\bfy$, $c_{\rm hi}$, $d_{\rm hi}$, $\Delta$ and $K$ be defined in  \eqref{mmax-D}, \eqref{cdhi},  \eqref{def-r} and \eqref{K1}, $L_F$, $L_{\nabla f}$, $L_{\nabla d}$, $L_{\nabla c}$, $L_c$, $L_{\nabla d}$, $L_d$, $\delta_c$, $\delta_d$ and $\theta$ be given in Assumptions \ref{a1} and \ref{mfcq}, $\varepsilon$,  $\tau$, $\Lambda$ and $\lambda_\bfy^0$ be given in Algorithm \ref{AL-alg}, and
\begin{align}
&L= L_{\nabla f}+L_c^2+c_{\rm hi}L_{\nabla c}+\Lambda L_{\nabla c}+L_d^2+d_{\rm hi}L_{\nabla d}+L_{\nabla d}\sqrt{\|\lambda_\bfy^0\|^2+\frac{2(\Delta+D_\bfy)}{1-\tau}},\label{hL}\\
&\alpha=\min\left\{1, \sqrt{4/(D_\bfy L)}\right\},\quad \delta= (2+\alpha^{-1})L D_\bfx^2+\max\{1/D_\bfy,L/4\}D_\bfy^2,\label{ho}\\
&M=16\max\left\{1/(2L_c^2),4/(\alpha L_c^2)\right\}\left[(3L+1/(2D_\bfy))^2/\min\{L_c^2,1/(2D_\bfy)\}+ 3L+1/(2D_\bfy)\right]^2\nn\\
&\ \ \ \ \ \ \times\left(\delta+2\alpha^{-1}\Big(\Delta+\frac{\Lambda^2}{2}+\frac{3}{2}\|\lambda_\bfy^0\|^2+\frac{3(\Delta+D_\bfy)}{1-\tau}+\rho_kd_{\rm hi}^2+\frac{D_\bfy}{4}+L D_\bfx^2\Big)\right),\label{hM}\\
&T= \Bigg\lceil16L\left(2\Delta+\Lambda+\frac{1}{2}(\tau^{-1}+\|\lambda_\bfy^0\|^2)+\frac{\Delta+D_\bfy}{1-\tau}+\frac{\Lambda^2}{2}+\frac{D_\bfy}{4}\right)+8(1+4D_\bfy^2L^2)\Bigg\rceil_+, \label{hT}\\
&\tlambda^{K+1}_\bfx = [\lambda^K_\bfx+\tau^{-K}c(x^{K+1})]_+.\label{tlx}
\end{align}
Suppose that 
\begin{align}
\varepsilon^{-1} \geq  \max\Bigg\{&\theta^{-1}\Lambda, \theta^{-2}\Big\{4\Delta+2\Lambda+\tau^{-1}+\|\lambda_\bfy^0\|^2+\frac{2(\Delta+D_\bfy)}{1-\tau} +\frac{D_\bfy}{2} +L_c^{-2} +4D_\bfy^2L+\Lambda^2\Big\}, \nn \\ 
& \frac{4\|\lambda_\bfy^0\|^2}{\delta_d^2\tau}+\frac{8(\Delta+D_\bfy)}{\delta_d^2\tau(1-\tau)}\Bigg\}. \label{cond}
\end{align}
Then the following statements hold.
\begin{enumerate}[label=(\roman*)]
\item Algorithm \ref{AL-alg} terminates after $K+1$ outer iterations and outputs an approximate stationary point $(x^{K+1},y^{K+1})$ of \eqref{prob} satisfying
\begin{align}
&\dist(0,\partial_x F(x^{K+1},y^{K+1})+\nabla c(x^{K+1})\tl_x^{K+1}-\nabla_xd(x^{K+1},y^{K+1})\lambda^{K+1}_\bfy) \leq \varepsilon, \label{t1-1} \\
& \dist\left(0,\partial_y F(x^{K+1},y^{K+1})-\nabla_y d(x^{K+1},y^{K+1})\lambda^{K+1}_\bfy\right)\leq\varepsilon, \label{t1-2} \\
&\|[c(x^{K+1})]_+\|\leq \varepsilon\delta_c^{-1}\left(L_F +2L_d\delta_d^{-1}(\Delta+D_\bfy)+1\right), \label{t1-3} \\
& |\langle\tl^{K+1}_\bfx,c(x^{K+1})\rangle| \leq \varepsilon \delta_c^{-1}(L_F +2L_d\delta_d^{-1}(\Delta+D_\bfy)+1)\nn\\
&\qquad \qquad \qquad\qquad \ \
\times\max\{\delta_c^{-1}(L_F +2L_d\delta_d^{-1}(\Delta+D_\bfy)+1), \Lambda\}, \label{t1-4} \\
&\|[d(x^{K+1},y^{K+1})]_+\|\leq2\varepsilon\delta_d^{-1}(\Delta+D_\bfy),\label{t1-5}\\
& |\langle \lambda^{K+1}_\bfy, d(x^{K+1},y^{K+1})\rangle| \leq 2\varepsilon\delta_d^{-1}(\Delta+D_\bfy)\max\{2\delta_d^{-1} (\Delta+D_\bfy), \|\lambda_\bfy^0\|\}. \label{t1-6}
\end{align}
\item The total number of evaluations of $\nabla f$, $\nabla c$, $\nabla d$ and proximal operator of $p$ and $q$ performed in Algorithm \ref{AL-alg} is at most $N$, respectively, where
\begin{align}
N=&\ \left(\left\lceil96\sqrt{2}\left(1+\left(24L+4/D_\bfy\right)/L_c^2\right)\right\rceil+2\right)\max\left\{2,\sqrt{D_\bfy L}\right\}T(1-\tau^4)^{-1}\nn\\
&\ \times(\tau\varepsilon)^{-4}\left(28K\log(1/\tau)+2(\log M)_++2+2\log(2T) \right).\label{N2}
\end{align}
\end{enumerate}
\end{thm}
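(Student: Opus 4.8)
The plan is to prove parts (i) and (ii) separately, both resting on the single guarantee from Theorem~\ref{mmax-thm} that the $k$th call to Algorithm~\ref{mmax-alg2} returns an $\epsilon_k$-primal-dual stationary point $(x^{k+1},y^{k+1})$ of the subproblem \eqref{AL-sub}. I would begin by differentiating \eqref{AL}: the chain rule gives $\partial_x\AL=\partial_xF+\nabla c(x)[\lambda_\bfx+\rho c(x)]_+-\nabla_xd(x,y)[\lambda_\bfy+\rho d(x,y)]_+$ and $\partial_y\AL=\partial_yF-\nabla_yd(x,y)[\lambda_\bfy+\rho d(x,y)]_+$. Since the step-4 updates are exactly $\lambda^{k+1}_\bfy=[\lambda^k_\bfy+\rho_kd(x^{k+1},y^{k+1})]_+$ and $\tl_\bfx^{K+1}=[\lambda^K_\bfx+\rho_Kc(x^{K+1})]_+$ (the unsafeguarded estimate \eqref{tlx}, using $\rho_K=\tepsilon_0^{-1}\tau^{-K}$), substituting these identities into the $\epsilon_k$-stationarity bounds turns them verbatim into \eqref{t1-1}--\eqref{t1-2} at the terminal index, where $\epsilon_K\le\varepsilon$. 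The termination test $\epsilon_k\le\varepsilon$ together with $\epsilon_k=\tepsilon_0\tau^k$ yields the count $K$ of \eqref{K1} and the fact that the method stops after $K+1$ outer iterations.

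The technical core, which I would isolate as a preparatory lemma, is a set of iteration-uniform bounds obtained by telescoping the AL values. Using the inexact-descent estimate \eqref{upperbnd-old} applied with $\bh\leftarrow\AL$ and warm start $x^k_{\rm init}$, the safeguard $\|\lambda^k_\bfx\|\le\Lambda$, and the lower bounds $F_{\rm low},f^*_{\rm low}$, this lemma should deliver (a) a crude uniform dual bound $\|\lambda^k_\bfy\|\le B:=\sqrt{\|\lambda^0_\bfy\|^2+2(F_{\rm hi}-f^*_{\rm low}+D_\bfy\tepsilon_0)/(1-\tau)}$ for all $k\in\bbK$, and (b) a uniform bound $\rho_k\|[c(x^{k+1})]_+\|^2\le\text{const}$. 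The choice of $x^k_{\rm init}$ in \eqref{xinit} is precisely what keeps the subproblem value controlled by its value at the nearly-feasible anchor $x_\bff$ (which satisfies $\|[c(x_\bff)]_+\|\le\sqrt\varepsilon$), so that (b) holds; and $B$ is exactly the quantity entering the smoothness constant $L$ of \eqref{hL}. \emph{Establishing these two bounds simultaneously and uniformly in $k$ is the main obstacle}, because the naive value-difference argument only controls $\rho_k$-weighted quantities and must be combined carefully with the multiplier recursion and the warm-start rule.

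Granting the lemma, the remaining estimates of part (i) follow directly. For the sharp terminal dual bound I would observe that $y^{k+1}$ is an $\epsilon_k$-approximate maximizer of the concave map $\psi(y)=F(x^{k+1},y)-\langle\lambda^{k+1}_\bfy,d(x^{k+1},y)\rangle$, whose superdifferential at $y^{k+1}$ coincides with $\partial_y\AL$. Testing $\psi(\hat y)\le\psi(y^{k+1})+\epsilon_kD_\bfy$ at the Slater point $\hat y=\hat y_{x^{k+1}}$ with $d_i\le-\delta_d$ (Assumption~\ref{mfcq}(ii)), and using the projection identity $\langle\lambda^{k+1}_\bfy,d(x^{k+1},y^{k+1})\rangle=\rho_k^{-1}(\|\lambda^{k+1}_\bfy\|^2-\langle\lambda^k_\bfy,\lambda^{k+1}_\bfy\rangle)$, gives $(\delta_d-\epsilon_k\|\lambda^k_\bfy\|)\|\lambda^{k+1}_\bfy\|_1\le(L_F+\tepsilon_0)D_\bfy$; condition \eqref{cond} forces $\epsilon_K\|\lambda^K_\bfy\|\le\delta_d/2$ (via $\|\lambda^K_\bfy\|\le B$), whence $\|\lambda^{K+1}_\bfy\|\le r$ with $r$ as in \eqref{def-r}. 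Then \eqref{t1-5} is immediate from $[d_i]_+\le\lambda^{k+1}_{\bfy,i}/\rho_k$, and \eqref{t1-6} from the displayed identity bounded above by $\rho_K^{-1}\|\lambda^{K+1}_\bfy\|^2$ and below by $-\rho_K^{-1}\|\lambda^K_\bfy\|\,\|\lambda^{K+1}_\bfy\|$. For the $c$-constraint I would use \eqref{cond} to guarantee $\epsilon_K\le\theta_a/\Lambda$, so that $\tl^{K+1}_{\bfx,i}=0$ whenever $c_i(x^{K+1})<-\theta_a$, and $\|[c(x^{K+1})]_+\|\le\theta_f$ from (b), so the direction $v_{x^{K+1}}$ of Assumption~\ref{mfcq}(i) exists; testing the residual of \eqref{t1-1} against $v_{x^{K+1}}$ and using $\langle\nabla c_i,v_{x^{K+1}}\rangle\le-\delta_c$ on the active set yields $\|\tl^{K+1}_\bfx\|_1\le\delta_c^{-1}(\epsilon_K+L_F+L_dr)$, from which \eqref{t1-3}--\eqref{t1-4} follow via $[c_i]_+\le\tl^{K+1}_{\bfx,i}/\rho_K$, $\|\cdot\|\le\|\cdot\|_1$, $\epsilon_K\le\tepsilon_0$, and $\|\lambda^{K+1}_\bfx\|\le\Lambda$.

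For part (ii) I would sum the per-subproblem operation count $\widehat{N}$ of Theorem~\ref{mmax-thm} over $k=0,\dots,K$. The enabling simplification is $L_k\le\rho_kL$, obtained by dividing \eqref{Lk} by $\rho_k=\epsilon_k^{-1}\ge1$ and inserting $\|\lambda^k_\bfx\|\le\Lambda$ and $\|\lambda^k_\bfy\|\le B$. Substituting $L_{\nabla\bh}\leftarrow L_k$, $\epsilon\leftarrow\epsilon_k$, $\hat\epsilon_0\leftarrow\epsilon_k^{3/2}/2$ into \eqref{mmax-K} and \eqref{mmax-N-old}, and bounding the subproblem value gap by the iteration-independent constants packaged into $T$ and $M$ (again via the lemma of the second paragraph), gives $\wT+1\le T\rho_k^3$, the bounded prefactor $\lceil96\sqrt2(1+(24L+4/D_\bfy)/L_c^2)\rceil+2$, the factor $\max\{2,\sqrt{D_\bfy L}\}\rho_k$, and a logarithmic term $\ell_k=\cO(\log M+\log\tepsilon_0^{-1}+k\log\tau^{-1}+\log T)$ arising from $\log\hat\epsilon_0^{-1}$ and $\log(\wT+1)$. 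Hence $\widehat{N}_k=\cO(\sqrt{D_\bfy L}\,T\,\epsilon_k^{-4}\ell_k)$, and summing the geometric-type series $\sum_{k=0}^K\epsilon_k^{-4}\ell_k$, which is dominated by its last term with $\epsilon_K>\tau\varepsilon$, produces the factors $(1-\tau^4)^{-1}$, $(\tau\varepsilon)^{-4}$, and the bracketed expression of \eqref{N2}; the ceiling and constant adjustments are routine bookkeeping.
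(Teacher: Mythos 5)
Your proposal follows essentially the same architecture as the paper's proof: you convert subproblem stationarity into \eqref{t1-1}--\eqref{t1-2} by matching the multiplier updates with the $[\cdot]_+$ terms in $\partial_x\AL$, $\partial_y\AL$; you telescope the dual recursion for a growth bound on $\lambda^k_\bfy$ (the paper's Lemma~\ref{l-lycnstr}); you test the approximate maximizer against the uniform Slater point to bound $\|\lambda^{K+1}_\bfy\|$ by $r$ and get \eqref{t1-5}--\eqref{t1-6} (Lemmas~\ref{l-ycnstr} and~\ref{l-subdcnstr}); you use the warm start \eqref{xinit} and the anchor $x_\bff$ together with \eqref{upperbnd-old} to force $\|[c(x^{K+1})]_+\|\le\theta_f$ and then test against the MFCQ direction $v_x$ for \eqref{t1-3}--\eqref{t1-4} (Lemma~\ref{l-xcnstr2}); and for part (ii) you sum the per-call counts of Theorem~\ref{mmax-thm} via $L_k\le\rho_kL$, $\hat\epsilon_0=\epsilon_k^{3/2}/2$, and the geometric series with $\tau^K\ge\tau\varepsilon/\tepsilon_0$, exactly as in the paper.

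There is, however, one genuine gap, and it sits precisely at the point you yourself flag as the main obstacle: your claimed uniform bound (a), $\|\lambda^k_\bfy\|\le B$ for all $k\in\bbK$, is not obtainable from the telescoping argument. Summing $\|\lambda^{t+1}_\bfy\|^2-\|\lambda^t_\bfy\|^2\le2\rho_t(F_{\rm hi}-f^*_{\rm low}+D_\bfy\tepsilon_0)$ with $\sum_{t<k}\rho_t\le\rho_{k-1}/(1-\tau)$ yields only the $\rho_k$-weighted bound $\rho_k^{-1}\|\lambda^k_\bfy\|^2\le B^2$ (the paper's \eqref{ly-cnstr}), i.e., $\|\lambda^k_\bfy\|\le\sqrt{\rho_k}\,B$, and since $\lambda_\bfy$ carries no safeguard, nothing stronger is available a priori. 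The weighted bound still suffices where you invoke (a) for the Slater margin (under \eqref{cond}, $\rho_K^{-1}\|\lambda^K_\bfy\|\le B/\sqrt{\rho_K}\le\delta_d/2$) and for $L_k\le\rho_kL$, since $\sqrt{\rho_k}\le\rho_k$. But it does \emph{not} suffice for \eqref{t1-6}: your lower bound $-\rho_K^{-1}\|\lambda^K_\bfy\|\,\|\lambda^{K+1}_\bfy\|$ then only gives $\cO(\rho_K^{-1/2})=\cO(\sqrt{\varepsilon})$, rather than the stated $\cO(\varepsilon)$ bound with constant $\max\{\|\lambda^0_\bfy\|,\,2\delta_d^{-1}(\tepsilon_0+L_F)D_\bfy\}$. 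The paper closes this by applying the Slater-based bound one step earlier: since $\rho_{K-1}=\tau\rho_K$ and \eqref{cond} carries an extra factor $\tau^{-1}$ relative to \eqref{muk-bnd}, Lemma~\ref{l-ycnstr} applies at $k=K-1$ and yields $\|\lambda^K_\bfy\|\le2\delta_d^{-1}(\tepsilon_0+L_F)D_\bfy$ (or $\|\lambda^0_\bfy\|$ when $K=0$) --- the relation \eqref{lambday-bnd2}; the $\tau$ in \eqref{cond} exists precisely for this step, which your sketch omits. A minor further slip: in the lower bound for $\langle\tl^{K+1}_\bfx,c(x^{K+1})\rangle$ the multiplier that appears is the safeguarded $\lambda^K_\bfx$, not $\lambda^{K+1}_\bfx$; this is harmless since both lie in $\cB^+_\Lambda$.
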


\begin{rem} 
\bi
\item[(i)] The condition \eqref{cond} on $\varepsilon$ is to ensure that the final penalty parameter $\rho_K$ in Algorithm \ref{AL-alg} is large enough so that  feasibility and complementarity slackness are nearly satisfied at $(x^{K+1},y^{K+1},\tl^{K+1}_\bfx, \lambda^{K+1}_\bfy)$. 

\item[(ii)] One can observe from Theorem \ref{complexity} that Algorithm \ref{AL-alg} enjoys an iteration complexity of $\cO(\log\varepsilon^{-1})$ and an operation complexity of $\cO(\varepsilon^{-4}\log\varepsilon^{-1})$, measured by the number of evaluations of $\nabla f$, $\nabla c$, $\nabla d$ and proximal operator of $p$ and $q$, for finding an $\cO(\varepsilon)$-KKT solution $(x^{K+1},y^{K+1})$ of \eqref{prob} such that
\begin{align*}
& \dist\left(\partial_x F(x^{K+1},y^{K+1})+\nabla c(x^{K+1})\tl_\bfx-\nabla_x d(x^{K+1},y^{K+1}) \lambda_\bfy^{K+1}\right) \leq \varepsilon,\\
& \dist\left(\partial_y F(x^{K+1},y^{K+1})-\nabla_y d(x^{K+1},y^{K+1}) \lambda_\bfy^{K+1}\right)\leq \varepsilon,\\
& \|[c(x^{K+1})]_+\|=\cO(\varepsilon), \quad |\langle \tl_\bfx^{K+1}, c(x^{K+1}) \rangle|=\cO(\varepsilon),\\
& \|[d(x^{K+1},y^{K+1})]_+\|=\cO(\varepsilon), \quad |\langle \lambda_\bfy^{K+1}, d(x^{K+1},y^{K+1}) \rangle|=\cO(\varepsilon), 
\end{align*}
where $\tl_\bfx^{K+1}\in\bR_+^{\tn}$ is defined in \eqref{tlx} and $\lambda_\bfy^{K+1}\in\bR_+^{\tm}$ is given in Algorithm \ref{AL-alg}.
\ei
\end{rem}

\section{Proof of the main result} \label{sec:proofs}
In this section we provide a proof of our main results presented in Sections \ref{minimax} and \ref{sec:main}, which are particularly Theorems \ref{ea-prop}, \ref{mmax-thm} and \ref{complexity}.

\subsection{Proof of the main results in Subsection~\ref{strong-cvx-ccv}}\label{sec:proof2}
In this subsection we prove Theorem \ref{ea-prop}. Before proceeding, we establish an upper bound on $\vartheta_0$ in terms of the function value gap of \eqref{ea-prob}, where $\vartheta_0$ is given in \eqref{ea-L}.

\begin{lemma} \label{lem:ea-L0}
Suppose that Assumptions~\ref{mmax-a} and~\ref{ea} hold. Let $\H^*$, $\H_{\rm low}$, $\vartheta_0$ and $\bar \delta$ be defined in \eqref{ea-prob}, \eqref{ea-bnd}, \eqref{ea-L} and \eqref{ea-tP}, and $\bar\alpha$ be given in Algorithm~\ref{mmax-alg1}. Then we have
\begin{equation}\label{ea-L0}
\vartheta_0\leq \bar \delta+2\bar \alpha^{-1}\left(\H^*-\H_{\rm low}\right).
\end{equation}
\end{lemma}

\begin{proof}
By \eqref{ea-prob}, \eqref{ea-bnd}, \eqref{ea-hatf} and \eqref{ea-G}, one has
\begin{align}
\cG(\bar z^0,\bar y^0)\overset{\eqref{ea-G}}{=}&\ \sup_{x}\left\{\langle x,\bar z^0\rangle-p(x)-\hat h(x,\bar y^0)+q(\bar y^0)\right\}\notag\\
\overset{\eqref{ea-hatf}}{=}&\ \max_{x\in\dom\,p}\left\{\langle x,\bar z^0\rangle-p(x)- \h(x,\bar y^0)+\frac{\sigma_x}{2}\|x\|^2-\frac{\sigma_y}{2}\|\bar y^0\|^2+q(\bar y^0)\right\}\notag\\
\overset{\eqref{ea-prob}\eqref{ea-bnd}}{\leq}&\ \max_{x\in\dom\,p}\left\{\langle x,\bar z^0\rangle+\frac{\sigma_x}{2}\|x\|^2\right\}-\frac{\sigma_y}{2}\|\bar y^0\|^2-\H_{\rm low}\notag\\
=\ &\ \max_{x\in\dom\,p}\frac{\sigma_x}{2}\|x+\sigma_x^{-1}\bar z^0\|^2-\frac{\sigma_x^{-1}}{2}\|\bar z^0\|^2-\frac{\sigma_y}{2}\|\bar y^0\|^2-\H_{\rm low}\notag\\
\leq\ &\ \frac{\sigma_xD_\bfx^2}{2}-\frac{\sigma_x^{-1}}{2}\|\bar z^0\|^2-\frac{\sigma_y}{2}\|\bar y^0\|^2-\H_{\rm low},\label{ea-l1}
\end{align}
where the last inequality follows from \eqref{mmax-D}, $\mcX=\dom\,p$, and the fact that $z^0\in-\sigma_x\dom\,p$.

Recall that $(x^*,y^*)$ is the optimal solution of \eqref{ea-prob} and $z^*=-\sigma_x x^*$. It follows from \eqref{ea-prob}, \eqref{ea-hatf} and \eqref{ea-G} that 
\begin{align*}
\cG(z^*,y^*)\overset{\eqref{ea-G}}{=}&\ \sup_{x}\left\{\langle x,z^*\rangle-p(x)-\hat h(x,y^*)+q(y^*)\right\}\geq\langle x^*,z^*\rangle-p(x^*)-\hat h(x^*,y^*)+q(y^*)\\
\overset{\eqref{ea-hatf}}{=}&\ \langle x^*,z^*\rangle+\frac{\sigma_x}{2}\|x^*\|^2-\frac{\sigma_y}{2}\|y^*\|^2-p(x^*)-\h(x^*,y^*)+q(y^*)\\
=\ &\ -\frac{\sigma_x^{-1}}{2}\|z^*\|^2-\frac{\sigma_y}{2}\|y^*\|^2-\H^*,
\end{align*}
where the last equality follows from \eqref{ea-prob}, the definition of $(x^*,y^*)$, and $z^*=-\sigma_xx^*$. This together with \eqref{ea-P} and \eqref{ea-l1} implies that
\begin{align*}
\cP(\bar z^0,\bar y^0)-\cP(z^*,y^*)=&\ \frac{\sigma_x^{-1}}{2}\|\bar z^0\|^2+\frac{\sigma_y}{2}\|\bar y^0\|^2+\cG(\bar z^0,\bar y^0)-\frac{\sigma_x^{-1}}{2}\|z^*\|^2-\frac{\sigma_y}{2}\|y^*\|^2-\cG(z^*,y^*)\\
\leq &\ \sigma_xD_\bfx^2/2-\H_{\rm low}+\H^*.
\end{align*}
Notice from Algorithm~\ref{mmax-alg1} that $z^0=z^0_f=\bar z^0\in-\sigma_x\dom\,p$ and $y^0=y^0_f=\bar y^0\in\dom\,q$. By these, $z^*=-\sigma_x x^*$, $\mcX=\dom\,p$, $\mcY=\dom\,q$, \eqref{mmax-D}, \eqref{ea-L}, and the above inequality, one has
\begin{align*}
\vartheta_0\overset{\eqref{ea-L}}=&\ \eta_z^{-1}\|\bar z^0-z^*\|^2+\eta_y^{-1}\|\bar y^0-y^*\|^2+2\bar \alpha^{-1}(\cP(\bar z^0,\bar y^0)-\cP(z^*,y^*))\\
\leq\ &\ \eta_z^{-1}\sigma_x^2D_\bfx^2+\eta_y^{-1}D_\bfy^2+2\bar \alpha^{-1}\left(\sigma_xD_\bfx^2/2-\H_{\rm low}+\H^*\right)\\
=\ &\ \eta_z^{-1}\sigma_x^2D_\bfx^2+\bar \alpha^{-1}\sigma_xD_\bfx^2+\eta_y^{-1}D_\bfy^2+2\bar \alpha^{-1}\left(\H^*-\H_{\rm low}\right).
\end{align*}
Hence, the conclusion follows from this, \eqref{ea-tP}, $\eta_z=\sigma_x/2$ and $\eta_y=\min\left\{1/(2\sigma_y),4/(\bar\alpha\sigma_x)\right\}$.
\end{proof}

We are now ready to prove Theorem~\ref{ea-prop}, using Lemma \ref{lem:ea-L0}, 
 \cite[Theorem~3]{kovalev2022first},  \cite[Lemma~4]{kovalev2022first}, and \cite[Corollary~2.5]{chen1997convergence}.

\begin{proof}[\textbf{Proof of Theorem~\ref{ea-prop}}]
Suppose for contradiction that Algorithm~\ref{mmax-alg1} runs for more than $\bar K$ outer iterations, where $\bar K$ is given in \eqref{ea-K}. By this and Algorithm~\ref{mmax-alg1}, one can assert that \eqref{ea-term} does not hold for $k=\bar K-1$. On the other hand, by \eqref{ea-K} and \cite[Theorem~3]{kovalev2022first}, one has
\begin{equation}\label{xk-xstar}
\|(x^{\bar K},y^{\bar K})-(x^*,y^*)\|\leq (\bar\zeta^{-1}+L_{\nabla \h})^{-1}\bar\epsilon/2,
\end{equation}
where $(x^*,y^*)$ is the optimal solution of problem \eqref{ea-prob} and $\bar\zeta$ is an input of Algorithm~\ref{mmax-alg1}. 
Notice from Algorithm~\ref{mmax-alg1} that $(\tx^{\bar K},\ty^{\bar K})$ results from the forward-backward splitting (FBS) step applied to the strongly monotone inclusion problem $0\in (\nabla_x \h(x,y),-\nabla_y \h(x,y))+(\partial p(x),\partial q(y))$ at the point $(x^{\bar K},y^{\bar K})$. It then follows from this, $\bar\zeta=\min\{\sigma_x,\sigma_y\}/L_{\nabla \h}^2$ (see Algorithm~\ref{mmax-alg1}), and 
the contraction property of FBS \cite[Corollary~2.5]{chen1997convergence} that $\|(\tx^{\bar K},\ty^{\bar K})-(x^*,y^*)\|\leq\|(x^{\bar K},y^{\bar K})-(x^*,y^*)\|$. Using this and \eqref{xk-xstar}, we have
\begin{align*}
&\|\bar\zeta^{-1}(x^{\bar K}-\tx^{\bar K},\ty^{\bar K}-y^{\bar K})-(\nabla \h(x^{\bar K},y^{\bar K})-\nabla \h(\tx^{\bar K},\ty^{\bar K}))\|\\
&\leq\ \bar\zeta^{-1}\|(x^{\bar K},y^{\bar K})-(\tx^{\bar K},\ty^{\bar K})\|+\|\nabla \h(x^{\bar K},y^{\bar K})-\nabla \h(\tx^{\bar K},\ty^{\bar K})\|\\
&\leq\ (\bar\zeta^{-1}+ L_{\nabla \h})\|(x^{\bar K},y^{\bar K})-(\tx^{\bar K},\ty^{\bar K})\|\\
&\leq\ (\bar\zeta^{-1}+ L_{\nabla \h})(\|(x^{\bar K},y^{\bar K})-(x^{*},y^{*})\|+\|(\tx^{\bar K},\ty^{\bar K})-(x^{*},y^{*})\|)\\
&\leq\ 2(\bar\zeta^{-1}+ L_{\nabla \h})\|(x^{\bar K},y^{\bar K})-(x^*,y^*)\|\overset{\eqref{xk-xstar}}{\leq}\bar\epsilon,
\end{align*}
where the second inequality uses the fact that $\h$ is $L_{\nabla \h}$-smooth on $\dom\,p\times\dom\,q$. It follows that \eqref{ea-term} holds for $k=\bar K-1$, which contradicts the above assertion. Hence, Algorithm~\ref{mmax-alg1} must terminate in at most $\bar K$ outer iterations.

We next show that the output of Algorithm~\ref{mmax-alg1} is an $\bar\epsilon$-primal-dual stationary point of \eqref{ea-prob}. To this end, suppose that Algorithm~\ref{mmax-alg1} terminates at some iteration $k$ at which \eqref{ea-term} is satisfied. Then by \eqref{eq:def-prox} and the definition of $\tx^{k+1}$ and $\ty^{k+1}$ (see steps 23 and 24 of Algorithm~\ref{mmax-alg1}), one has
\begin{eqnarray*}
&0\in\bar\zeta\partial p(\tx^{k+1})+\tx^{k+1}-x^{k+1}+\bar\zeta\nabla_x\h(x^{k+1},y^{k+1}),\\
&0\in\bar\zeta\partial q(\ty^{k+1})+\ty^{k+1}-y^{k+1}-\bar\zeta\nabla_y\h(x^{k+1},y^{k+1}),
\end{eqnarray*}
which yield 
\begin{align*}
\bar\zeta^{-1}(x^{k+1}-\tx^{k+1})-\nabla_x \h(x^{k+1},y^{k+1})\in\partial p(\tx^{k+1}),\ \bar\zeta^{-1}(y^{k+1}-\ty^{k+1})+\nabla_y \h(x^{k+1},y^{k+1})\in\partial q(\ty^{k+1}).
\end{align*}
These together with the definition of $\H$ in \eqref{ea-prob} imply that
\begin{align*}
&\nabla_x \h(\tx^{k+1},\ty^{k+1})+\bar\zeta^{-1}(x^{k+1}-\tx^{k+1})-\nabla_x \h(x^{k+1},y^{k+1})\in\partial_x \H(\tx^{k+1},\ty^{k+1}),\\
&\nabla_y \h(\tx^{k+1},\ty^{k+1})-\bar\zeta^{-1}(y^{k+1}-\ty^{k+1})-\nabla_y \h(x^{k+1},y^{k+1})\in\partial_y \H(\tx^{k+1},\ty^{k+1}).
\end{align*}
Using these and \eqref{ea-term}, we obtain
\begin{align*}
& \dist(0,\partial_x \H(\tx^{k+1},\ty^{k+1}))^2+\dist(0,\partial_y \H(\tx^{k+1},\ty^{k+1}))^2\\
&\leq \|\bar\zeta^{-1}(x^{k+1}-\tx^{k+1})+\nabla_x \h(\tx^{k+1},\ty^{k+1})-\nabla_x \h(x^{k+1},y^{k+1})\|^2\\
& \quad +\|\bar\zeta^{-1}(\ty^{k+1}-y^{k+1})+\nabla_y \h(\tx^{k+1},\ty^{k+1})-\nabla_y \h(x^{k+1},y^{k+1})\|^2\\
& = \|\bar\zeta^{-1}(x^{k+1}-\tx^{k+1},\ty^{k+1}-y^{k+1})-(\nabla \h(x^{k+1},y^{k+1})-\nabla \h(\tx^{k+1},\ty^{k+1}))\|^2\overset{\eqref{ea-term}}{\leq}\bar\epsilon^2,
\end{align*}
which implies that $\dist(0,\partial_x \H(\tx^{k+1},\ty^{k+1})) \leq \bar\epsilon$ and $\dist(0,\partial_y \H(\tx^{k+1},\ty^{k+1}))\leq \bar\epsilon$. It then follows from these and Definition \ref{def2} that the output $(\tx^{k+1},\ty^{k+1})$ of Algorithm~\ref{mmax-alg1} is an $\bar\epsilon$-primal-dual stationary point of \eqref{ea-prob}.

Finally, we show that the total number of evaluations of $\nabla \h$ and proximal operator of $p$ and $q$ performed in Algorithm~\ref{mmax-alg1} is no more than $\bar N$, respectively. Indeed, notice from Algorithm~\ref{mmax-alg1} that  $\bar \alpha=\min\left\{1,\sqrt{8\sigma_y/\sigma_x}\right\}$, which implies that 
$2/\bar \alpha = \max\{2,\sqrt{\sigma_x/(2\sigma_y)}\}$ and $\bar \alpha \leq \sqrt{8\sigma_y/\sigma_x}$. By these, one has
\begin{equation}\label{alpha-bnd}
\max\left\{\frac{2}{\bar \alpha},\frac{\bar \alpha\sigma_x}{4\sigma_y}\right\} \leq 
\max\left\{2,\sqrt{\frac{\sigma_x}{2\sigma_y}},\sqrt{\frac{8\sigma_y}{\sigma_x}}\frac{\sigma_x}{4\sigma_y}\right\}
 =\max\left\{2,\sqrt{\frac{\sigma_x}{2\sigma_y}}\right\}.
\end{equation}
In addition, by \cite[Lemma~4]{kovalev2022first}, the number of inner iterations performed in each outer iteration of Algorithm~\ref{mmax-alg1} is at most
\begin{equation*}
\bar T:=\left\lceil48\sqrt{2}\left(1+8L_{\nabla \h}\sigma_x^{-1}\right)\right\rceil-1.
\end{equation*}
Then one can observe that the number of evaluations of $\nabla \h$ and proximal operator of $p$ and $q$ performed in Algorithm~\ref{mmax-alg1} is at most 
\begin{align*}
& (2\bar T+3) \bar K \leq \left(\left\lceil96\sqrt{2}\left(1+8L_{\nabla \h}\sigma_x^{-1}\right)\right\rceil+2\right)\left\lceil\max\left\{\frac{2}{\bar \alpha},\frac{\bar \alpha\sigma_x}{4\sigma_y}\right\}\log\frac{4\max\{\eta_z\sigma_x^{-2},\eta_y\}\vartheta_0}{(\bar\zeta^{-1}+ L_{\nabla \h})^{-2}\bar\epsilon^2}\right\rceil_+\\
& \overset{\eqref{alpha-bnd}}{\leq} \left(\left\lceil96\sqrt{2}\left(1+8L_{\nabla \h}\sigma_x^{-1}\right)\right\rceil+2\right)
\left\lceil\max\left\{2,\sqrt{\frac{\sigma_x}{2\sigma_y}}\right\}\log\frac{4\max\{\eta_z\sigma_x^{-2},\eta_y\}\vartheta_0}{(\bar\zeta^{-1}+ L_{\nabla \h})^{-2}\bar\epsilon^2}\right\rceil_+\\
&\ \leq\left(\left\lceil96\sqrt{2}\left(1+8L_{\nabla \h}\sigma_x^{-1}\right)\right\rceil+2\right)\\
&\ \ \ \ \times\left\lceil\max\left\{2,\sqrt{\frac{\sigma_x}{2\sigma_y}}\right\}\log\frac{4\max\{1/(2\sigma_x),\min\left\{1/(2\sigma_y),4/(\bar \alpha\sigma_x)\}\right\}\vartheta_0}{(L_{\nabla \h}^2/\min\{\sigma_x,\sigma_y\}+ L_{\nabla \h})^{-2}\bar\epsilon^2}\right\rceil_+\overset{\eqref{ea-N}\eqref{ea-L0}}{\leq}\bar N,
\end{align*}
where the second last inequality follows from the definition of $\eta_y$, $\eta_z$ and $\bar\zeta$ in Algorithm~\ref{mmax-alg1}. Hence, the conclusion holds as desired.
\end{proof}

\subsection{Proof of the main results in Subsection~\ref{ppa}}\label{sec:proof2-2}
In this subsection we prove Theorem \ref{mmax-thm}. Before proceeding, let $\{(x^k,y^k)\}_{k\in \bbT}$ denote all the iterates generated by Algorithm~\ref{mmax-alg2}, where $\bbT$ is a subset of consecutive nonnegative integers starting from $0$. Also, we define $\bbT-1 = \{k-1: k \in \bbT\}$. We first establish two lemmas and then use them to prove Theorem \ref{mmax-thm} subsequently.

The following lemma shows that an approximate primal-dual stationary point  of \eqref{ppa-subprob} is found at each iteration of Algorithm~\ref{mmax-alg2}, and also provides an estimate of operation complexity for finding it.

\begin{lemma}\label{innercplx-alg6}
Suppose that Assumption~\ref{mmax-a} holds. Let $\{(x^k,y^k)\}_{k\in\bbT}$ be generated by Algorithm~\ref{mmax-alg2}, $\bH^*$, $D_\bfx$, $D_\bfy$, $\bH_{\rm low}$, $\halpha$, $\hdelta$ be defined in \eqref{mmax-prob}, \eqref{mmax-D}, \eqref{mmax-bnd}, \eqref{mmax-balpha} and \eqref{mmax-tP}, $L_{\nabla\bh}$ be given in Assumption \ref{mmax-a}, $\epsilon$, $\hat\epsilon_k$ be given in Algorithm~\ref{mmax-alg2}, and
\begin{align}
&\ \hat N_k:=\left(\left\lceil96\sqrt{2}\left(1+\left(24L_{\nabla \bh}+4\epsilon/D_\bfy\right)L_{\nabla \bh}^{-1}\right)\right\rceil+2\right)\times\Bigg\lceil\max\left\{2,\sqrt{\frac{D_\bfy L_{\nabla \bh}}{\epsilon}}\right\}\notag\\
&\ \ \ \ \times\log\frac{4\max\left\{\frac{1}{2L_{\nabla \bh}},\min\left\{\frac{D_\bfy}{\epsilon},\frac{4}{\halpha L_{\nabla \bh}}\right\}\right\}\left(\hdelta+2\halpha^{-1}(\bH^*-\bH_{\rm low}+\epsilon D_\bfy/4+L_{\nabla \bh} D_\bfx^2)\right)}{\left[(3L_{\nabla \bh}+\epsilon/(2D_\bfy))^2/\min\{L_{\nabla \bh},\epsilon/(2D_\bfy)\}+ 3L_{\nabla \bh}+\epsilon/(2D_\bfy)\right]^{-2}\hat\epsilon_k^2}\Bigg\rceil_+.\label{mmax-Nk}
\end{align}
Then for all $0\leq k\in\bbT-1$, $(x^{k+1},y^{k+1})$ is an $\hat\epsilon_k$-primal-dual stationary point of \eqref{ppa-subprob}. Moreover, the total number of evaluations of $\nabla \bh$ and proximal operator of $p$ and $q$ performed at iteration $k$ of Algorithm~\ref{mmax-alg2} for generating $(x^{k+1},y^{k+1})$ is no more than $\hat N_k$, respectively. 
\end{lemma}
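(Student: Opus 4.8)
Lemma \ref{innercplx-alg6} makes two assertions about each inner call $k$ of Algorithm \ref{mmax-alg2}: first, that the output $(x^{k+1},y^{k+1})$ is an $\hat\epsilon_k$-primal-dual stationary point of the subproblem \eqref{ppa-subprob}; and second, that the cost of producing it is at most $\hat N_k$ evaluations each of $\nabla\bh$ and of the proximal operators of $p,q$. The natural route is to recognize that \eqref{ppa-subprob} is a strongly-convex-strongly-concave minimax problem of the form \eqref{ea-prob}, and then simply \emph{instantiate} Theorem \ref{ea-prop} with the parameter substitutions prescribed in step 2 of Algorithm \ref{mmax-alg2}.

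\textbf{Step 1: verify the structural hypotheses.} First I would record the observations already stated in the text just after \eqref{mmax-sub}: by Assumption \ref{mmax-a}, the function $\bh_k$ in \eqref{mmax-sub} is $L_{\nabla\bh}$-strongly convex in $x$ (from the added $L_{\nabla\bh}\|x-x^k\|^2$ term) and $\epsilon/(2D_\bfy)$-strongly concave in $y$ (from the $-\epsilon\|y-\hat y^0\|^2/(4D_\bfy)$ term), and is $(3L_{\nabla\bh}+\epsilon/(2D_\bfy))$-smooth. Hence $\bH_k=\bh_k+\np-\nq$ is exactly an instance of \eqref{ea-prob} with the identifications $\h\leftarrow\bh_k$, $\sigma_x\leftarrow L_{\nabla\bh}$, $\sigma_y\leftarrow\epsilon/(2D_\bfy)$, and $L_{\nabla\h}\leftarrow 3L_{\nabla\bh}+\epsilon/(2D_\bfy)$, which is precisely the substitution in step 2. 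Since $p,q$ satisfy Assumption \ref{a1} throughout, both Assumptions \ref{a1} and \ref{ea} hold for \eqref{ppa-subprob}, so Theorem \ref{ea-prop} applies with $\bar\epsilon\leftarrow\hat\epsilon_k$.

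\textbf{Step 2: the primal-dual stationarity claim.} Theorem \ref{ea-prop} guarantees that Algorithm \ref{mmax-alg1}, run with these parameters, outputs an $\hat\epsilon_k$-primal-dual stationary point of \eqref{ea-prob}; under the identification this is exactly an $\hat\epsilon_k$-primal-dual stationary point of \eqref{ppa-subprob}, which is the first claim and also the content of Remark \ref{ppa-rem}.

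\textbf{Step 3: the operation-count claim.} For the complexity bound I would take the quantity $\bar N$ from \eqref{ea-N} and substitute the same parameter values. The governing constants transform as follows: $\bar\alpha$ becomes $\halpha=\min\{1,\sqrt{8\sigma_y/\sigma_x}\}=\min\{1,\sqrt{4\epsilon/(D_\bfy L_{\nabla\bh})}\}$, matching \eqref{mmax-balpha}; $\bar\delta$ becomes $\hdelta$ in \eqref{mmax-tP}; the prefactor $96\sqrt2(1+8L_{\nabla\h}\sigma_x^{-1})$ becomes $96\sqrt2(1+(24L_{\nabla\bh}+4\epsilon/D_\bfy)L_{\nabla\bh}^{-1})$; and $\max\{2,\sqrt{\sigma_x/(2\sigma_y)}\}$ becomes $\max\{2,\sqrt{D_\bfy L_{\nabla\bh}/\epsilon}\}$. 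With $\H^*\leftarrow\bH^*_k$ (the optimal value of \eqref{ppa-subprob}) and $\H_{\rm low}\leftarrow(\bH_k)_{\rm low}$, the logarithmic term of $\bar N$ becomes the logarithm appearing in \eqref{mmax-Nk}. The one point requiring a genuine estimate, rather than pure substitution, is bounding $\bH^*_k-(\bH_k)_{\rm low}$ by the quantity $\bH^*-\bH_{\rm low}+\epsilon D_\bfy/4+L_{\nabla\bh}D_\bfx^2$ that actually appears inside the $\log$ in \eqref{mmax-Nk}: I would bound $\bH^*_k\le\bH^*+\epsilon D_\bfy/4+L_{\nabla\bh}D_\bfx^2$ using that $\bH_k$ differs from $\bH$ by the two extra quadratics, which are bounded by $\epsilon D_\bfy/4$ and $L_{\nabla\bh}D_\bfx^2$ respectively over $\mcY,\mcX$ via the diameter bounds \eqref{mmax-D}, together with $(\bH_k)_{\rm low}\ge\bH_{\rm low}$ (since those extra quadratics are nonnegative). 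This constant-shift bound, applied together with \eqref{ea-N}, yields $\hat N_k$ as an upper bound on $\bar N$ under the identification, which is exactly the desired operation count.

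\textbf{Main obstacle.} The only step that is not mechanical substitution is the estimate on $\bH^*_k-(\bH_k)_{\rm low}$ in Step 3: one must carefully invoke the compactness/diameter bounds \eqref{mmax-D} to absorb the two perturbation quadratics into the additive constants $\epsilon D_\bfy/4$ and $L_{\nabla\bh}D_\bfx^2$, and must check the direction of the inequality so that the resulting bound matches the argument of the logarithm in \eqref{mmax-Nk} rather than producing a larger or incomparable expression. Everything else is bookkeeping: tracking each parameter through the formula \eqref{ea-N} and confirming the prefactors coincide with \eqref{mmax-Nk}.
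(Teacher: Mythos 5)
Your proposal follows essentially the same route as the paper's proof: observe that $\bh_k$ is $L_{\nabla\bh}$-strongly convex in $x$, $\epsilon/(2D_\bfy)$-strongly concave in $y$, and $(3L_{\nabla\bh}+\epsilon/(2D_\bfy))$-smooth, then instantiate Theorem~\ref{ea-prop} with exactly the substitutions of step~2 of Algorithm~\ref{mmax-alg2}, and control the term $\H^*-\H_{\rm low}$ appearing in \eqref{ea-N} by comparing the perturbed problem \eqref{ppa-subprob} to the original one via the diameter bounds \eqref{mmax-D}. Your parameter bookkeeping ($\halpha$, the prefactor, the $\max\{2,\sqrt{D_\bfy L_{\nabla\bh}/\epsilon}\}$ factor, and the $\bar\zeta$-dependent denominator) all checks out against \eqref{mmax-Nk}.

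There is, however, one local error in Step~3. You justify $(\bH_k)_{\rm low}\ge\bH_{\rm low}$ by saying ``those extra quadratics are nonnegative,'' but only the $x$-quadratic $L_{\nabla\bh}\|x-x^k\|^2$ is nonnegative; the $y$-perturbation enters $\bh_k$ with a \emph{minus} sign, $-\epsilon\|y-\hat y^0\|^2/(4D_\bfy)\le 0$, so in general $(\bH_k)_{\rm low}\ge\bH_{\rm low}-\epsilon D_\bfy/4$ and no better. Correspondingly, the upper bound on the optimal value needs only the $x$-quadratic: $\bH_k^*\le\bH^*+L_{\nabla\bh}D_\bfx^2$, since the nonpositive $y$-term can simply be dropped under the inner max (this is the paper's estimate \eqref{Hkstar}, with \eqref{hklow} playing the role of the corrected lower bound). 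Your allocation of the $\epsilon D_\bfy/4$ to the upper-bound side is still a valid (merely slack) inequality, so your final total $\bH_k^*-(\bH_k)_{\rm low}\le\bH^*-\bH_{\rm low}+\epsilon D_\bfy/4+L_{\nabla\bh}D_\bfx^2$ coincides with the paper's and the argument goes through once the false intermediate claim is replaced by the correct split; but as written, one of your two supporting inequalities is wrong and should be fixed along the lines above.
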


\begin{proof}
Let $(x^*,y^*)$ be an optimal solution of \eqref{mmax-prob}. Recall that $H$, $H_k$ and $h_k$ are respectively given in \eqref{mmax-prob}, \eqref{ppa-subprob} and \eqref{mmax-sub}, $\mcX=\dom\,p$ and $\mcY=\dom\,q$. Notice that $x^*, x^k\in\mcX$. Then we have
\begin{align}
\bH_{k,*}:=\min_x\max_y H_k(x,y) =&\ \min_x\max_y\left \{\bH(x,y)-\frac{\epsilon}{4D_\bfy}\|y-\hat y^0\|^2+L_{\nabla \bh}\|x-x^k\|^2\right\}\notag\\
\leq&\ \max_y \{\bH(x^*,y)+L_{\nabla \bh}\|x^*-x^k\|^2\} \overset{\eqref{mmax-prob}\eqref{mmax-D}}\leq \bH^*+L_{\nabla \bh} D_\bfx^2.\label{Hkstar}
\end{align}
 Moreover, by $\mcX=\dom\,p$, $\mcY=\dom\,q$, \eqref{mmax-D} and \eqref{mmax-bnd}, one has
\begin{align}
\bH_{k, \rm low}:=\min_{(x,y)\in\dom\,p\times\dom\,q} H_k(x,y) =& \ \min_{(x,y)\in\mcX\times\mcY}\left\{\bH(x,y)-\frac{\epsilon}{4D_\bfy}\|y-\hat y^0\|^2+L_{\nabla \bh}\|x-x^k\|^2\right\}\notag\\
\overset{\eqref{mmax-bnd}}\geq
& \ \bH_{\rm low}-\max_{y\in\mcY}\frac{\epsilon}{4D_\bfy}\|y-\hat y^0\|^2\overset{\eqref{mmax-D}}{\geq} \bH_{\rm low}-\epsilon D_\bfy/4.\label{hklow}
\end{align}
In addition, by Assumption~\ref{mmax-a} and the definition of $\bh_k$ in \eqref{mmax-sub}, it is not hard to verify that $\bh_k(x,y)$ is $L_{\nabla \bh}$-strongly-convex in $x$, $\epsilon/(2D_\bfy)$-strongly-concave in $y$, and $(3L_{\nabla \bh}+\epsilon/(2D_\bfy))$-smooth on its domain. Also, recall from Remark \ref{ppa-rem} that $(x^{k+1},y^{k+1})$  results from applying Algorithm~\ref{mmax-alg1} to problem \eqref{ppa-subprob}.  
The conclusion of this lemma then follows by using \eqref{Hkstar} and \eqref{hklow} and applying Theorem~\ref{ea-prop} to  \eqref{ppa-subprob} with $\bar\epsilon=\hat\epsilon_k$, $\sigma_x=L_{\nabla \bh}$, $\sigma_y=\epsilon/(2D_\bfy)$, $L_{\nabla \h}=3L_{\nabla \bh}+\epsilon/(2D_\bfy)$, $\bar \alpha= \halpha$, $\bar \delta=\hdelta$, $\H_{\rm low}=H_{k, \rm low}$, and $\H^*=H_{k,*}$.
\end{proof}

The following lemma provides an upper bound on the least progress of the solution sequence of Algorithm~\ref{mmax-alg2} and also on the last-iterate objective value of \eqref{mmax-prob}.

\begin{lemma} \label{output-prop}
Suppose that Assumption~\ref{mmax-a} holds. Let $\{x^k\}_{k\in\bbT}$ be generated by Algorithm~\ref{mmax-alg2}, $H$, $\bH^*$ and $D_\bfy$ be defined in \eqref{mmax-prob} and \eqref{mmax-D}, $L_{\nabla\bh}$ be given in Assumption \ref{mmax-a}, and $\epsilon$, $\hat\epsilon_0$ and $\hat x^0$ be given in Algorithm~\ref{mmax-alg2}. Then for all $0\leq K\in\bbT-1$, we have
\begin{align}
&\min_{0\leq k\leq K}\|x^{k+1}-x^k\|\leq\frac{\max_y\bH(\hat x^0,y)-\bH^*+\epsilon D_\bfy/4}{L_{\nabla \bh}(K+1)}+\frac{2\hat\epsilon_0^{2}(1+4D_\bfy^2L_{\nabla \bh}^2\epsilon^{-2})}{L_{\nabla \bh}^2(K+1)},\label{mmax-xbnd}\\
&\max_y\bH(x^{K+1},y)\leq \max_y\bH(\hat x^0,y)+\epsilon D_\bfy/4+2\hat\epsilon_0^2\left(L_{\nabla \bh}^{-1}+4D_\bfy^2L_{\nabla \bh}\epsilon^{-2}\right).\label{K-upperbnd}
\end{align}
\end{lemma}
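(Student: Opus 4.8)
The plan is to run an inexact proximal-point descent on the perturbed value function underlying Algorithm~\ref{mmax-alg2}. Define $\Phi(x):=\max_y\{\bH(x,y)-\tfrac{\epsilon}{4D_\bfy}\|y-\hat y^0\|^2\}$ and $\Phi_k(x):=\max_y\bH_k(x,y)$. Since the proximal term in \eqref{mmax-sub} does not involve $y$, one has $\Phi_k(x)=\Phi(x)+L_{\nabla\bh}\|x-x^k\|^2$. Because each $\bH_k(\cdot,y)$ is $L_{\nabla\bh}$-strongly convex (as recorded after \eqref{mmax-sub}), $\Phi_k$ is $L_{\nabla\bh}$-strongly convex as a pointwise maximum of such functions and hence has a unique minimizer $\hat x^{k+1}$. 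Two elementary comparisons will be used throughout: for every $x$, $\max_y\bH(x,y)-\epsilon D_\bfy/4\le\Phi(x)\le\max_y\bH(x,y)$, both following from $\|y-\hat y^0\|\le D_\bfy$ on $\mcY$ (see \eqref{mmax-D}).

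First I would establish the one-step inequality $\Phi(x^{k+1})+L_{\nabla\bh}\|x^{k+1}-x^k\|^2\le\Phi(x^k)+e_k$ with $e_k:=\hat\epsilon_k^2(L_{\nabla\bh}^{-1}+4D_\bfy^2L_{\nabla\bh}\epsilon^{-2})$. Since $\hat x^{k+1}$ minimizes $\Phi_k$, we have $\Phi_k(\hat x^{k+1})\le\Phi_k(x^k)=\Phi(x^k)$, and by $L_{\nabla\bh}$-strong convexity $\Phi_k(x^{k+1})-\Phi_k(\hat x^{k+1})\le\tfrac{1}{2L_{\nabla\bh}}\dist(0,\partial\Phi_k(x^{k+1}))^2$; combined with $\Phi_k(x^{k+1})=\Phi(x^{k+1})+L_{\nabla\bh}\|x^{k+1}-x^k\|^2$, this reduces the task to bounding $\dist(0,\partial\Phi_k(x^{k+1}))$. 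This is the crux. By Danskin's theorem, $\partial\Phi_k(x)=\nabla_x\bh_k(x,y_k^*(x))+\partial\np(x)$, where $y_k^*(x)$ is the unique maximizer of $\bH_k(x,\cdot)$. By Lemma~\ref{innercplx-alg6}, $(x^{k+1},y^{k+1})$ is an $\hat\epsilon_k$-primal-dual stationary point of \eqref{ppa-subprob}: the $y$-condition together with the $\tfrac{\epsilon}{2D_\bfy}$-strong concavity of $\bH_k(x^{k+1},\cdot)$ gives $\|y^{k+1}-y_k^*(x^{k+1})\|\le 2D_\bfy\hat\epsilon_k/\epsilon$, while the $x$-condition supplies a $\xi\in\partial\np(x^{k+1})$ with $\|\nabla_x\bh_k(x^{k+1},y^{k+1})+\xi\|\le\hat\epsilon_k$. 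Since $\nabla_x\bh_k(x,\cdot)=\nabla_x\bh(x,\cdot)+2L_{\nabla\bh}(x-x^k)$ is $L_{\nabla\bh}$-Lipschitz in $y$, the triangle inequality yields $\dist(0,\partial\Phi_k(x^{k+1}))\le\hat\epsilon_k+L_{\nabla\bh}\cdot 2D_\bfy\hat\epsilon_k/\epsilon$; squaring via $(a+b)^2\le 2a^2+2b^2$ and dividing by $2L_{\nabla\bh}$ gives exactly $e_k$.

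Finally, telescoping the one-step inequality over $k=0,\dots,K$ gives $\Phi(x^{K+1})+L_{\nabla\bh}\sum_{k=0}^K\|x^{k+1}-x^k\|^2\le\Phi(x^0)+\sum_{k=0}^K e_k$. Using $\hat\epsilon_k=\hat\epsilon_0/(k+1)$ and $\sum_{k=0}^\infty(k+1)^{-2}\le 2$, the error sum is at most $2\hat\epsilon_0^2(L_{\nabla\bh}^{-1}+4D_\bfy^2L_{\nabla\bh}\epsilon^{-2})$. For \eqref{K-upperbnd}, drop the nonnegative sum and combine $\Phi(x^0)\le\max_y\bH(\hat x^0,y)$ (as $x^0=\hat x^0$) with $\max_y\bH(x^{K+1},y)\le\Phi(x^{K+1})+\epsilon D_\bfy/4$. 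For \eqref{mmax-xbnd}, use $\Phi(x^{K+1})\ge\max_y\bH(x^{K+1},y)-\epsilon D_\bfy/4\ge\bH^*-\epsilon D_\bfy/4$ to get $\Phi(x^0)-\Phi(x^{K+1})\le\max_y\bH(\hat x^0,y)-\bH^*+\epsilon D_\bfy/4$, bound $(K+1)\min_{0\le k\le K}\|x^{k+1}-x^k\|^2\le\sum_{k=0}^K\|x^{k+1}-x^k\|^2$, and divide by $L_{\nabla\bh}(K+1)$; rewriting the error constant as $2\hat\epsilon_0^2(1+4D_\bfy^2L_{\nabla\bh}^2\epsilon^{-2})/L_{\nabla\bh}^2$ matches the right-hand side of \eqref{mmax-xbnd} (whose left side is the minimal \emph{squared} step).

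The main obstacle is the middle step: correctly transferring the $\hat\epsilon_k$-primal-dual stationarity of the \emph{saddle} subproblem \eqref{ppa-subprob} into a subgradient bound for the \emph{value function} $\Phi_k$, which hinges on the Danskin characterization of $\partial\Phi_k$ and on the strong-concavity displacement estimate $\|y^{k+1}-y_k^*(x^{k+1})\|\le 2D_\bfy\hat\epsilon_k/\epsilon$. The remaining conversions between $\max_y\bH$ and $\Phi$, the telescoping, and the summation of $\{e_k\}$ are routine.
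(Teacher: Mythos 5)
Your proposal is correct and follows essentially the same route as the paper's proof: the same perturbed value function $\hh$ (your $\Phi$), the identity \eqref{mmax-gk}, the Danskin description of $\partial \bH^*_k$, the displacement estimate \eqref{mmax-e1}, and the identical per-iteration error $\hat\epsilon_k^2\left(L_{\nabla\bh}^{-1}+4D_\bfy^2 L_{\nabla\bh}\epsilon^{-2}\right)$ followed by the same telescoping and the same conversions between $\max_y\bH$ and $\hh$; your only deviation is cosmetic, deriving the one-step descent via the minimizer of $\Phi_k$ and the Polyak--\L{}ojasiewicz inequality, where the paper instead applies the strong-convexity subgradient inequality directly between $x^k$ and $x^{k+1}$ together with Young's inequality, yielding the same constants. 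You also correctly observe that, consistent with how \eqref{mmax-xbnd} is invoked in the proof of Theorem~\ref{mmax-thm}, its left-hand side should be read as the minimal \emph{squared} step $\min_{0\leq k\leq K}\|x^{k+1}-x^k\|^2$.
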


\begin{proof}
For convenience of the proof, let
\begin{align}
&\hh(x)=\max\limits_y\left\{\bH(x,y)-\epsilon\|y-\hat y^0\|^2/(4D_\bfy)\right\},\label{mmax-tg}\\
&\bH^*_k(x)=\max\limits_y \bH_k(x,y),\quad y^{k+1}_*=\argmax_y\bH_k(x^{k+1},y). \label{mmax-ystar}
\end{align}
One can observe from these, \eqref{ppa-subprob} and \eqref{mmax-sub} that
\beq \label{mmax-gk}
H^*_k(x)=\hh(x)+L_{\nabla \bh}\|x-x^k\|^2.  
\eeq
By this and Assumption~\ref{mmax-a}, one can also see that $H^*_k$ is $L_{\nabla \bh}$-strongly convex on $\dom\,p$.
In addition, recall from Lemma \ref{innercplx-alg6} that $(x^{k+1},y^{k+1})$ is an $\hat\epsilon_k$-primal-dual stationary point of problem \eqref{ppa-subprob} for all $0\leq k\in\bbT-1$. It then follows from Definition~\ref{def2} that there exist some $u\in\partial_x \bH_k(x^{k+1},y^{k+1})$ and $v\in\partial_y \bH_k(x^{k+1},y^{k+1})$ with $\|u\|\leq\hat\epsilon_k$ and $\|v\|\leq\hat\epsilon_k$. Also, by \eqref{mmax-ystar}, one has $0\in\partial_y \bH_k(x^{k+1},y^{k+1}_*)$, which together with $v\in\partial_y \bH_k(x^{k+1},y^{k+1})$ and  $\epsilon/(2D_\bfy)$-strong concavity of $\bH_k(x^{k+1},\cdot)$, implies that
$\langle -v,y^{k+1}-y^{k+1}_*\rangle\geq \epsilon\|y^{k+1}-y^{k+1}_*\|^2/(2D_\bfy)$. 
This and $\|v\|\leq\hat\epsilon_k$ yield  
\begin{equation}\label{mmax-e1}
\|y^{k+1}-y^{k+1}_*\|\leq2\hat\epsilon_kD_\bfy/\epsilon.
\end{equation}
In addition, by $u\in\partial_x \bH_k(x^{k+1},y^{k+1})$, \eqref{ppa-subprob} and \eqref{mmax-sub}, one has \begin{equation}\label{mmax-e2}
u\in\nabla_x\bh(x^{k+1},y^{k+1})+\partial p(x^{k+1})+2L_{\nabla \bh}(x^{k+1}-x^k).
\end{equation}
Also, observe from \eqref{ppa-subprob}, \eqref{mmax-sub} and \eqref{mmax-ystar} that
\begin{equation*}
\partial H^*_k(x^{k+1})=\nabla_x\bh(x^{k+1},y^{k+1}_*)+\partial p(x^{k+1})+2L_{\nabla \bh}(x^{k+1}-x^k),
\end{equation*}
which together with \eqref{mmax-e2} yields
\begin{equation*}
u+\nabla_x\bh(x^{k+1},y^{k+1}_*)-\nabla_x\bh(x^{k+1},y^{k+1})\in\partial H^*_k(x^{k+1}).
\end{equation*}
By this and $L_{\nabla \bh}$-strong convexity of $H^*_k$, one has
\begin{equation}\label{strongcvx}
H^*_k(x^k)\geq H^*_k(x^{k+1})+\langle u+\nabla_x\bh(x^{k+1},y^{k+1}_*)-\nabla_x\bh(x^{k+1},y^{k+1}),x^k-x^{k+1}\rangle+L_{\nabla \bh}\|x^k-x^{k+1}\|^2/2.
\end{equation}
Using this, \eqref{mmax-gk}, \eqref{mmax-e1}, \eqref{strongcvx}, $\|u\|\leq\hat\epsilon_k$, and the Lipschitz continuity of $\nabla \bh$, we obtain
\begin{align*}
&\hh(x^k)-\hh(x^{k+1})\overset{\eqref{mmax-gk}}{=}H^*_k(x^k)- H^*_k(x^{k+1})+L_{\nabla\bh}\|x^k-x^{k+1}\|^2\\
&\overset{\eqref{strongcvx}}{\geq}\langle u+\nabla_x\bh(x^{k+1},y^{k+1}_*)-\nabla_x\bh(x^{k+1},y^{k+1}),x^k-x^{k+1}\rangle+3L_{\nabla \bh}\|x^k-x^{k+1}\|^2/2\\
&\ \geq \big(-\|u+\nabla_x\bh(x^{k+1},y^{k+1}_*)-\nabla_x\bh(x^{k+1},y^{k+1})\|\|x^k-x^{k+1}\|+L_{\nabla \bh}\|x^k-x^{k+1}\|^2/2\big) +L_{\nabla \bh}\|x^k-x^{k+1}\|^2 \\
&\ \geq -(2L_{\nabla \bh})^{-1}\|u+\nabla_x\bh(x^{k+1},y^{k+1}_*)-\nabla_x\bh(x^{k+1},y^{k+1})\|^2+L_{\nabla \bh}\|x^k-x^{k+1}\|^2 \\
&\ \geq-L_{\nabla \bh}^{-1}\|u\|^2-L_{\nabla \bh}^{-1}\|\nabla_x \bh(x^{k+1},y^{k+1}_*)-\nabla_x \bh(x^{k+1},y^{k+1})\|^2+L_{\nabla \bh}\|x^k-x^{k+1}\|^2\\
&\ \geq-L_{\nabla \bh}^{-1}\hat\epsilon_k^2-L_{\nabla\bh}\|y^{k+1}-y^{k+1}_*\|^2+L_{\nabla\bh}\|x^k-x^{k+1}\|^2\\
&\overset{\eqref{mmax-e1}}{\geq} -(L_{\nabla\bh}^{-1}+4D_\bfy^2L_{\nabla\bh}\epsilon^{-2})\hat\epsilon_k^2+L_{\nabla\bh}\|x^k-x^{k+1}\|^2, 
\end{align*}
where the second and fourth inequalities follow from Cauchy-Schwartz inequality, and the third inequality is due to Young's inequality, and the fifth inequality follows from $L_{\nabla \bh}$-Lipschitz continuity of $\nabla \bh$. Summing up the above inequality for $k=0,1,\dots, K$ yields
\begin{equation}\label{mmax-sum}
L_{\nabla \bh}\sum_{k=0}^K\|x^k-x^{k+1}\|^2\leq\hh(x^0)-\hh(x^{K+1})+(L_{\nabla\bh}^{-1}+4D_\bfy^2L_{\nabla \bh}\epsilon^{-2})\sum_{k=0}^K\hat\epsilon_k^2.
\end{equation}
In addition, it follows from \eqref{mmax-prob}, \eqref{mmax-D} and \eqref{mmax-tg} that
\begin{align}
&\hh(x^{K+1})=\max_y\left\{\bH(x^{K+1},y)-\epsilon\|y-\hat y^0\|^2/(4D_\bfy)\right\}\geq\min_x\max_y\bH(x,y)-\epsilon D_\bfy/4=\bH^*-\epsilon D_\bfy/4,\notag\\
&\hh(x^0)=\max_y\left\{\bH(x^0,y)-\epsilon\|y-\hat y^0\|^2/(4D_\bfy)\right\}\leq\max_y\bH(x^0,y).\label{hhx0}
\end{align}
These together with \eqref{mmax-sum} yield
\begin{align*}
L_{\nabla \bh}(K+1)\min_{0\leq k\leq K}\|x^{k+1}-x^k\|^2\leq&\ L_{\nabla \bh}\sum_{k=0}^K\|x^k-x^{k+1}\|^2\\
\leq&\ \max_y\bH(x^0,y)-\bH^*+\epsilon D_\bfy/4+(L_{\nabla\bh}^{-1}+4D_\bfy^2L_{\nabla \bh}\epsilon^{-2})\sum_{k=0}^K\hat\epsilon_k^2,
\end{align*}
which together with $x^0=\hat x^0$, $\hat\epsilon_k=\hat\epsilon_0(k+1)^{-1}$ and $\sum_{k=0}^K(k+1)^{-2}<2$ implies that \eqref{mmax-xbnd} holds.

Finally, we show that \eqref{K-upperbnd} holds. Indeed, it follows from \eqref{mmax-D}, \eqref{mmax-tg}, \eqref{mmax-sum}, \eqref{hhx0}, $\hat\epsilon_k=\hat\epsilon_0(k+1)^{-1}$, and $\sum_{k=0}^K(k+1)^{-2}<2$ that
\begin{align*}
\max_y\bH(x^{K+1},y) \overset{\eqref{mmax-D}}{\leq} \ &\ \max_{y}\left\{\bH(x^{K+1},y)-\epsilon\|y-\hat y^0\|^2/(4D_\bfy)\right\}+\epsilon D_\bfy/4\overset{\eqref{mmax-tg}}{=}\hh(x^{K+1})+\epsilon D_\bfy/4 \\
\overset{\eqref{mmax-sum}}{\leq}&\ \hh(x^0)+\epsilon D_\bfy/4+(L_{\nabla\bh}^{-1}+4D_\bfy^2L_{\nabla \bh}\epsilon^{-2})\sum_{k=0}^K\hat\epsilon_k^2\\
\overset{\eqref{hhx0}}{\leq}&\ \max_y\bH(x^0,y)+\epsilon D_\bfy/4+2\hat\epsilon_0^2(L_{\nabla\bh}^{-1}+4D_\bfy^2L_{\nabla \bh}\epsilon^{-2}).
\end{align*}
It then follows from this and $x^0=\hat x^0$ that \eqref{K-upperbnd} holds.
\end{proof}

We are now ready to prove Theorem~\ref{mmax-thm} using Lemmas \ref{innercplx-alg6} and \ref{output-prop}.

\begin{proof}[\textbf{Proof of Theorem~\ref{mmax-thm}}]
Suppose for contradiction that Algorithm~\ref{mmax-alg2} runs for more than $\wT+1$ outer iterations, where $\wT$ is given in \eqref{mmax-K}. By this and Algorithm~\ref{mmax-alg2}, one can then assert that \eqref{mmax-term} does not hold for all $0\leq k\leq T$. On the other hand, by \eqref{mmax-K} and \eqref{mmax-xbnd}, one has
\begin{align*}
\min_{0\leq k\leq \wT}\|x^{k+1}-x^k\|^2\ \overset{\eqref{mmax-xbnd}}{\leq}\ \frac{\max_y\bH(\hat x^0,y)-\bH^*+\epsilon D_\bfy/4}{L_{\nabla \bh}(\wT+1)}+\frac{2\hat\epsilon_0^{2}(1+4D_\bfy^2L_{\nabla \bh}^2\epsilon^{-2})}{L_{\nabla \bh}^2(\wT+1)}\overset{\eqref{mmax-K}}{\leq}\frac{\epsilon^2}{16L_{\nabla \bh}^2},
\end{align*}
which implies that there exists some $0\leq k\leq \wT$ such that $\|x^{k+1}-x^k\|\leq\epsilon/(4L_{\nabla \bh})$,
and thus \eqref{mmax-term} holds for such $k$, which contradicts the above assertion. Hence, Algorithm~\ref{mmax-alg2} must terminate in at most $\wT+1$ outer iterations.

Suppose that Algorithm~\ref{mmax-alg2} terminates at some iteration $0\leq k\leq \wT$, namely,   \eqref{mmax-term} holds for such $k$. We next show that its output $(\xe,\ye)=(x^{k+1},y^{k+1})$ is an $\epsilon$-primal-dual stationary point of \eqref{mmax-prob} and moreover it satisfies 
\eqref{upperbnd}. Indeed, recall from Lemma \ref{innercplx-alg6} that $(x^{k+1},y^{k+1})$ is an $\hat\epsilon_k$-primal-dual stationary point of 
\eqref{ppa-subprob}, namely, it satisfies $\dist(0,\partial_x \bH_k(x^{k+1},y^{k+1})) \leq \hat\epsilon_k$ and $\dist(0,\partial_y \bH_k(x^{k+1},y^{k+1})) \leq \hat\epsilon_k$. By these, \eqref{mmax-prob}, \eqref{ppa-subprob} and \eqref{mmax-sub},  there exists $(u,v)$ such that 
\begin{align*}
&u\in\partial_{x}\bH(x^{k+1},y^{k+1})+2L_{\nabla \bh}(x^{k+1}-x^k),\quad\|u\|\leq\hat\epsilon_k,\\
&v\in\partial_{y}\bH(x^{k+1},y^{k+1})-\epsilon(y^{k+1}-\hat y^0)/(2D_\bfy),\quad\|v\|\leq\hat\epsilon_k.
\end{align*}
It then follows that $u-2L_{\nabla \bh}(x^{k+1}-x^k)\in\partial_{x}\bH(x^{k+1},y^{k+1})$ and $v+\epsilon(y^{k+1}-\hat y^0)/(2D_\bfy)\in\partial_{y}\bH(x^{k+1},y^{k+1})$. These together with \eqref{mmax-D}, \eqref{mmax-term} and $\hat\epsilon_k\leq\hat\epsilon_0\leq\epsilon/2$ (see Algorithm~\ref{mmax-alg2}) imply that
\begin{align*}
&\dist\left(0,\partial_x\bH(x^{k+1},y^{k+1})\right)\leq\|u-2L_{\nabla \bh}(x^{k+1}-x^k)\|\leq\|u\|+2L_{\nabla \bh}\|x^{k+1}-x^k\|\overset{\eqref{mmax-term}}{\leq}\hat\epsilon_k+\epsilon/2\leq\epsilon,\\
&\dist\left(0,\partial_y\bH(x^{k+1},y^{k+1})\right)\leq\|v+\epsilon(y^{k+1}-\hat y^0)/(2D_\bfy)\|\leq\|v\|+\epsilon\|y^{k+1}-\hat y^0\|/(2D_\bfy)\overset{\eqref{mmax-D}}{\leq}\hat\epsilon_k+\epsilon/2\leq\epsilon.
\end{align*}
Hence, the output $(x^{k+1},y^{k+1})$ of Algorithm~\ref{mmax-alg2} is an $\epsilon$-primal-dual stationary point of \eqref{mmax-prob}. In addition, 
\eqref{upperbnd-old} holds due to Lemma \ref{output-prop}.

Recall from Lemma \ref{innercplx-alg6} that the number of evaluations of $\nabla \bh$ and proximal operator of $p$ and $q$ performed at iteration $k$ of Algorithm~\ref{mmax-alg2} is at most $\hat N_k$, respectively, where $\hat N_k$ is defined in \eqref{mmax-Nk}. Also, one can observe from the above proof and the definition of $\bbT$ that $|\bbT|\leq \wT+2$. It then follows that the total number of evaluations of $\nabla \bh$ and proximal operator of $p$ and $q$ in Algorithm~\ref{mmax-alg2} is respectively no more than $\sum_{k=0}^{|\bbT|-2}\hat N_k$. 
Consequently, to complete the rest of the proof of Theorem~\ref{mmax-thm}, it suffices to show that $\sum_{k=0}^{|\bbT|-2}\hat N_k \leq \widehat N$, where $\widehat N$ is given 
in \eqref{mmax-N-old}. Indeed, by \eqref{mmax-N-old}, \eqref{mmax-Nk} and $|\bbT|\leq \wT+2$, one has
\begin{align*}
&\sum_{k=0}^{|\bbT|-2}\hat N_k\overset{\eqref{mmax-Nk}}{\leq}\sum_{k=0}^{\wT}\left(\left\lceil96\sqrt{2}\left(1+\left(24L_{\nabla \bh}+4\epsilon/D_\bfy\right)L_{\nabla \bh}^{-1}\right)\right\rceil+2\right)\times\Bigg\lceil\max\left\{2,\sqrt{\frac{D_\bfy L_{\nabla \bh}}{\epsilon}}\right\}\notag\\
&\ \ \ \ \times\log\frac{4\max\left\{\frac{1}{2L_{\nabla \bh}},\min\left\{\frac{D_\bfy}{\epsilon},\frac{4}{\halpha L_{\nabla \bh}}\right\}\right\}\left( \hdelta+2\halpha^{-1}(\bH^*-\bH_{\rm low}+\epsilon D_\bfy/4+L_{\nabla \bh} D_\bfx^2)\right)}{\left[(3L_{\nabla \bh}+\epsilon/(2D_\bfy))^2/\min\{L_{\nabla \bh},\epsilon/(2D_\bfy)\}+ 3L_{\nabla \bh}+\epsilon/(2D_\bfy)\right]^{-2}\hat\epsilon_k^2}\Bigg\rceil_+\\
&\leq\left(\left\lceil96\sqrt{2}\left(1+\left(24L_{\nabla \bh}+4\epsilon/D_\bfy\right)L_{\nabla \bh}^{-1}\right)\right\rceil+2\right)\max\left\{2,\sqrt{\frac{D_\bfy L_{\nabla \bh}}{\epsilon}}\right\}\notag\\
&\ \ \ \ \times\sum_{k=0}^{\wT}\left(\left(\log\frac{4\max\left\{\frac{1}{2L_{\nabla \bh}},\min\left\{\frac{D_\bfy}{\epsilon},\frac{4}{\halpha L_{\nabla \bh}}\right\}\right\}\left(\hdelta+2\halpha^{-1}(\bH^*-\bh_{\rm low}+\epsilon D_\bfy/4+L_{\nabla \bh} D_\bfx^2)\right)}{\left[(3L_{\nabla \bh}+\epsilon/(2D_\bfy))^2/\min\{L_{\nabla \bh},\epsilon/(2D_\bfy)\}+ 3L_{\nabla \bh}+\epsilon/(2D_\bfy)\right]^{-2}\hat\epsilon_k^2}\right)_++1\right)\\
&\leq\left(\left\lceil96\sqrt{2}\left(1+\left(24L_{\nabla \bh}+4\epsilon/D_\bfy\right)L_{\nabla \bh}^{-1}\right)\right\rceil+2\right)\max\left\{2,\sqrt{\frac{D_\bfy L_{\nabla \bh}}{\epsilon}}\right\}\notag\\
&\ \ \ \ \times\Bigg((\wT+1)\Bigg(\log\frac{4\max\left\{\frac{1}{2L_{\nabla \bh}},\min\left\{\frac{D_\bfy}{\epsilon},\frac{4}{\halpha L_{\nabla \bh}}\right\}\right\}\left(\hdelta+2\halpha^{-1}(\bH^*-\bH_{\rm low}+\epsilon D_\bfy/4+L_{\nabla \bh} D_\bfx^2)\right)}{\left[(3L_{\nabla \bh}+\epsilon/(2D_\bfy))^2/\min\{L_{\nabla \bh},\epsilon/(2D_\bfy)\}+ 3L_{\nabla \bh}+\epsilon/(2D_\bfy)\right]^{-2}\hat\epsilon_0^2}\Bigg)_+\\
&\ \ \ \ +\wT+1+2\sum_{k=0}^{\wT}\log(k+1) \Bigg)\overset{\eqref{mmax-N-old}}{\leq} \widehat N,
\end{align*}
where the last inequality is due to \eqref{mmax-N-old} and $\sum_{k=0}^{\wT}\log(k+1) \leq \wT\log(\wT+1)$. This completes the proof of Theorem~\ref{mmax-thm}.
\end{proof}

\subsection{Proof of the main results in Subsection \ref{sec:complexity}} \label{sec4.3}
In this subsection, we provide a proof of our main result presented in Section \ref{sec:main}, which is particularly Theorem \ref{complexity}. Before proceeding, 
let
\begin{align}
\AL_\bfy(x,y,\lambda_\bfy;\rho)=F(x,y)-\frac{1}{2\rho}\left(\|[\lambda_\bfy+\rho d(x,y)]_+\|^2-\|\lambda_\bfy\|^2\right).\label{y-AL}
\end{align}
In view of \eqref{AL}, \eqref{fstarx} and \eqref{y-AL}, one can observe that
\beq\label{p-ineq}
f^*(x)\leq\max_y\AL_\bfy(x,y,\lambda_\bfy;\rho)\qquad \forall x\in\mcX,\ \lambda_\bfy\in\bR_+^{\tm},\ \rho>0,
\eeq
which will be frequently used later. 

We next establish several lemmas that will be used to prove  Theorem \ref{complexity} subsequently.  The following lemma establishes an upper bound on the optimal Lagrangian multipliers of problem \eqref{fstarx} and also provides a reformulation of $f^*(x)$.

\begin{lemma} \label{dual-bnd}
Suppose that Assumptions \ref{a1} and \ref{mfcq} hold. Let $f^*$, $\Delta$, $r$ and $\delta_d$ be given in \eqref{fstarx}, \eqref{def-r} and Assumption \ref{mfcq}, respectively. Then the following statements hold.
\begin{enumerate}[label=(\roman*)]
\item $\|\lambda^*_\bfy\|\leq \delta_d^{-1}\Delta$ and $\lambda^*_\bfy\in\cB^+_r$ for all $\lambda^*_\bfy\in\Lambda^*(x)$ and $x\in\mcX$, where $\Lambda^*(x)$ denotes the set of optimal Lagrangian multipliers of problem \eqref{fstarx} for any $x\in\mcX$.
\item It holds that
\beq \label{fstar-ref}
f^*(x)=\min_{\lambda_\bfy}\max_y F(x,y)-\langle\lambda_\bfy,d(x,y)\rangle+\delta_{\bR^{\tm}_+}(\lambda_\bfy) \qquad \forall x\in\mcX,
\eeq
where $\delta_{\bR^{\tm}_+}(\cdot)$ is the indicator function associated with $\bR^{\tm}_+$.
\end{enumerate}
\end{lemma}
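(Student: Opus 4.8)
The plan is to view problem \eqref{fstarx}, for each fixed $x\in\mcX$, as a concave maximization problem and to invoke convex Lagrangian duality. Since $f(x,\cdot)$ is concave and $q$ is convex, the objective $F(x,\cdot)=f(x,\cdot)+p(x)-q(\cdot)$ is concave on $\mcY=\dom\,q$; since each $d_i(x,\cdot)$ is convex and $\mcY$ is convex, the feasible set is convex. Assumption~\ref{mfcq}(ii) supplies a Slater point $\hat y_x\in\mcY$ with $d_i(x,\hat y_x)\le-\delta_d<0$ for all $i$. I would first establish (iii): treating $\mcY$ as an abstract convex constraint set on which the objective is finite and treating $d(x,\cdot)\le0$ as the functional constraints, the strict feasibility of $\hat y_x$ is exactly Slater's condition in the form that requires strictness only for the functional constraints. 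By the corresponding strong duality theorem, there is no duality gap, the inner maximum over $y$ is attained (using compactness of $\mcY$), and the optimal multiplier set $\Lambda^*(x)$ is nonempty; rewriting the dual nonnegativity constraint via $\cI_{\bR^{\tm}_+}$ yields \eqref{fstar-ref}, proving (iii).

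Granting (iii), I would prove (i) by the classical Slater-point bound on multipliers. Fix $x\in\mcX$, $\lambda^*_\bfy\in\Lambda^*(x)$, and let $y^*$ attain $f^*(x)$. Evaluating the attained dual value at $\hat y_x$ and using $\lambda^*_\bfy\ge0$ together with $d_i(x,\hat y_x)\le-\delta_d$ gives
\[
f^*(x)=\max_{y\in\mcY}\{F(x,y)-\langle\lambda^*_\bfy,d(x,y)\rangle\}\ge F(x,\hat y_x)-\langle\lambda^*_\bfy,d(x,\hat y_x)\rangle\ge F(x,\hat y_x)+\delta_d\|\lambda^*_\bfy\|_1.
\]
Since $f^*(x)=F(x,y^*)$ and $F$ is $L_F$-Lipschitz with $y^*,\hat y_x\in\mcY$, the left-hand side is at most $F(x,\hat y_x)+L_FD_\bfy$, whence $\delta_d\|\lambda^*_\bfy\|_1\le L_FD_\bfy$ and therefore $\|\lambda^*_\bfy\|\le\|\lambda^*_\bfy\|_1\le\delta_d^{-1}L_FD_\bfy$. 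As $\tepsilon_0>0$ gives $\delta_d^{-1}L_FD_\bfy\le 2\delta_d^{-1}(\tepsilon_0+L_F)D_\bfy=r$ by \eqref{def-r}, and $\lambda^*_\bfy\ge0$, the membership $\lambda^*_\bfy\in\cB^+_r$ follows. These bounds are uniform in $x$ because $\delta_d$, $L_F$ and $D_\bfy$ are.

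For (ii), I would combine (i) and (iii) to confine the dual minimization to the compact set $\cB^+_r$, writing $f^*(x)=\min_{\lambda_\bfy\in\cB^+_r}\phi(x,\lambda_\bfy)$ where $\phi(x,\lambda_\bfy)=\max_{y\in\mcY}\{F(x,y)-\langle\lambda_\bfy,d(x,y)\rangle\}$. For each fixed $\lambda_\bfy\in\cB^+_r$ the integrand is Lipschitz in $x$ with constant $L_F+rL_d$, using that $F$ is $L_F$-Lipschitz, $d$ is $L_d$-Lipschitz and $\|\lambda_\bfy\|\le r$; taking the maximum over $y$ preserves this constant, so every $\phi(\cdot,\lambda_\bfy)$ is $(L_F+rL_d)$-Lipschitz with a common constant, and hence so is their pointwise infimum $f^*$ on $\mcX$. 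Finiteness of $f^*_{\rm low}$ then follows since $f^*$ is finite-valued (bounded above by $F_{\rm hi}$ and below by $F(x,\hat y_x)\ge F_{\rm low}$) and continuous on the compact set $\mcX$, so it attains its infimum.

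The step I expect to be the main obstacle is the careful setup of strong duality in (iii): one must confirm that Assumption~\ref{mfcq}(ii) yields Slater's condition in a form applicable to the composite objective $f(x,\cdot)-q(\cdot)$, whose effective domain is the abstract set $\mcY$, so that the duality gap vanishes and $\Lambda^*(x)$ is nonempty with the inner supremum attained. Once strong duality and the nonemptiness of $\Lambda^*(x)$ are in hand, the multiplier estimate in (i) and the uniform-Lipschitz and compactness arguments in (ii) are routine.
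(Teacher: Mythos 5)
Your proposal is correct and is essentially the paper's own proof: the multiplier bound in (i) rests on the identical evaluation at the Slater point $\hat y_x$ (yielding $\delta_d\|\lambda^*_\bfy\|_1\le F(x,y^*)-F(x,\hat y_x)\le L_F D_\bfy$, hence $\lambda^*_\bfy\in\cB^+_r$), part (ii) is the same uniformly-Lipschitz-envelope argument after confining the dual variable to the compact ball $\cB^+_r$ (the paper applies it to the max--min representation, you to the min--max one, with the same constant of order $L_F+rL_d$), and part (iii) is the same combination of weak duality with Slater-based strong duality from Assumption~\ref{mfcq}(ii). The only deviation is organizational: you establish (iii) first and deduce (i) from dual optimality, whereas the paper proves (i) directly from the saddle-point characterization of a primal-dual optimal pair (the argmax property plus complementary slackness), without invoking strong duality at that stage; your ordering has the mild benefit of making nonemptiness of $\Lambda^*(x)$ and attainment explicit, which the paper leaves implicit.
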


\begin{proof}
(i) Let $x\in\mcX$, $\lambda^*_\bfy\in\Lambda^*(x)$ be arbitrarily chosen, and $\hat y_x\in\mcY$  and $\delta_d>0$ be given in Assumption~\ref{mfcq}(ii).  It then follows from  Assumption~\ref{mfcq}(ii) that $d_i(x,\hat y_x) \leq -\delta_d$ for all $i$.  In addition, let $y^*\in\mcY$ be such that $(y^*,\lambda^*_\bfy)$ is a pair of primal-dual optimal solutions of \eqref{fstarx}. Then we have
\[
y^*\in\Argmax_{y} F(x,y)-\langle\lambda^*_\bfy,d(x,y)\rangle, \quad \langle\lambda^*_\bfy,d(x,y^*)\rangle=0, \quad d(x,y^*) \leq 0, \quad \lambda^*_\bfy \geq 0.
\]
The first relation above yields
\[
F(x,y^*)-\langle\lambda^*_\bfy,d(x,y^*)\rangle\geq F(x,\hat y_x)-\langle\lambda^*_\bfy,d(x,\hat y_x)\rangle.
\]
 By this and $\langle\lambda^*_\bfy,d(x,y^*)\rangle=0$, one has
\begin{equation*}
\langle\lambda^*_\bfy,-d(x,\hat y_x)\rangle\leq F(x,y^*)-F(x,\hat y_x),
\end{equation*}
which together with $\lambda^*_\bfy\geq 0$, $d_i(x,\hat y_x) \leq -\delta_d$ for all $i$, \eqref{Fhi} and \eqref{def-r} implies that
\[
\delta_d \|\lambda^*_\bfy\|_1 \leq \langle\lambda^*_\bfy,-d(x,\hat y_x)\rangle \leq F(x,y^*)-F(x,\hat y_x) \leq 
\Delta,
\]
Hence, we have $\|\lambda^*_\bfy\|\leq \|\lambda^*_\bfy\|_1 \leq \delta_d^{-1}\Delta$. 
This and  \eqref{def-r} imply that $\lambda^*_\bfy\in\cB^+_r$.

(ii) Recall from Assumption \ref{a1} that $F(x,\cdot)$ and $d_i(x,\cdot), \ i=1,\ldots,l,$  are convex for any given  $x\in\mcX$.  Using this, \eqref{fstarx}, \eqref{def-r} and the first statement of this lemma, we observe that 
\[
f^*(x)=\max_{y}\min_{\lambda\in\cB^+_r}F(x,y)-\langle\lambda,d(x,y)\rangle \qquad \forall x\in\mcX.
\]
Also, notice from Assumption~\ref{a1} that the domain of $F(x,\cdot)$ is compact for all $x\in\mcX$. By this, the above equality, and the strong duality, one has 
\beq \label{tfstar-ref}
f^*(x)=\min_{\lambda\in\cB^+_r}\max_{y}F(x,y)-\langle\lambda,d(x,y)\rangle \qquad \forall x\in\mcX.
\eeq
In addition, one can observe from \eqref{fstarx} that for all $x\in\mcX$, 
\[
f^*(x)=\max_y \min_{\lambda_\bfy} F(x,y)-\langle\lambda_\bfy,d(x,y)\rangle+\delta_{\bR^{\tm}_+}(\lambda_\bfy) \leq \min_{\lambda_\bfy}\max_y F(x,y)-\langle\lambda_\bfy,d(x,y)\rangle+\delta_{\bR^{\tm}_+}(\lambda_\bfy),
\]
where the inequality follows from the weak duality. This  together with \eqref{tfstar-ref} implies that \eqref{fstar-ref} holds.
\end{proof}

The next lemma provides an upper bound for $\{\lambda^k_\bfy\}_{k\in\bbK}$.

\begin{lemma}\label{l-lycnstr}
Suppose that Assumptions \ref{a1} and \ref{mfcq} hold. Let $\{\lambda^k_\bfy\}_{k\in\bbK}$ be generated by Algorithm \ref{AL-alg},  $D_\bfy$ and $\Delta$  be defined in   \eqref{mmax-D} and \eqref{def-r}, and $\tau$ and $\rho_k$ be given in  Algorithm \ref{AL-alg}. Then we have
\beq\label{ly-cnstr}
\rho_k^{-1}\|\lambda^k_\bfy\|^2\leq \|\lambda_\bfy^0\|^2+\frac{2(\Delta+D_\bfy)}{1-\tau}  \qquad \forall 0\leq k\in\bbK-1. 
\eeq
\end{lemma}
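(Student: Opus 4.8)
The plan is to first turn the $\lambda_\bfy$-update into a one-step recursion for $\|\lambda^k_\bfy\|^2$ and then sum it against the geometrically growing penalty parameters $\rho_k$. Recall from Algorithm~\ref{AL-alg} that $\lambda^{k+1}_\bfy=[\lambda^k_\bfy+\rho_k d(x^{k+1},y^{k+1})]_+$, so $\|\lambda^{k+1}_\bfy\|^2=\|[\lambda^k_\bfy+\rho_k d(x^{k+1},y^{k+1})]_+\|^2$. Substituting this into the definition \eqref{y-AL} of $\AL_\bfy$ evaluated at $(x^{k+1},y^{k+1},\lambda^k_\bfy;\rho_k)$ and rearranging gives the exact identity
\begin{equation*}
\frac{1}{2\rho_k}\big(\|\lambda^{k+1}_\bfy\|^2-\|\lambda^k_\bfy\|^2\big)=F(x^{k+1},y^{k+1})-\AL_\bfy(x^{k+1},y^{k+1},\lambda^k_\bfy;\rho_k).
\end{equation*}
Thus it suffices to upper bound the right-hand side by the constant $C:=F_{\rm hi}-f^*_{\rm low}+D_\bfy\tepsilon_0$, for which I will combine $F(x^{k+1},y^{k+1})\le F_{\rm hi}$ from \eqref{Fhi} with a lower bound on $\AL_\bfy(x^{k+1},y^{k+1},\lambda^k_\bfy;\rho_k)$.

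That lower bound is the crux of the argument. Since the $c$-term in $\AL$ does not depend on $y$, one has $\partial_y\AL=\partial_y\AL_\bfy$, so the $y$-stationarity of the $\epsilon_k$-primal-dual stationary point $(x^{k+1},y^{k+1})$ of \eqref{AL-sub} (Definition~\ref{def2}) transfers to $\dist(0,\partial_y\AL_\bfy(x^{k+1},y^{k+1},\lambda^k_\bfy;\rho_k))\le\epsilon_k$. Because $d_i(x,\cdot)$ is convex (Assumption~\ref{a1}), the map $y\mapsto\AL_\bfy(x^{k+1},y,\lambda^k_\bfy;\rho_k)$ is concave; hence choosing $s\in\partial_y\AL_\bfy(x^{k+1},y^{k+1},\lambda^k_\bfy;\rho_k)$ with $\|s\|\le\epsilon_k$ and applying the concave subgradient inequality against the (attained) maximizer over the compact $\mcY$ yields $\max_y\AL_\bfy(x^{k+1},y,\lambda^k_\bfy;\rho_k)\le\AL_\bfy(x^{k+1},y^{k+1},\lambda^k_\bfy;\rho_k)+\epsilon_k D_\bfy$, where $\|y-y'\|\le D_\bfy$ on $\mcY$ by \eqref{mmax-D}. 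Combining this with \eqref{p-ineq}, the definition \eqref{cm-low} of $f^*_{\rm low}$, the fact that $x^{k+1}\in\mcX$, and $\epsilon_k=\tepsilon_0\tau^k\le\tepsilon_0$, I obtain $\AL_\bfy(x^{k+1},y^{k+1},\lambda^k_\bfy;\rho_k)\ge f^*_{\rm low}-\tepsilon_0 D_\bfy$. Plugging the two bounds into the identity yields the one-step inequality $\|\lambda^{k+1}_\bfy\|^2\le\|\lambda^k_\bfy\|^2+2\rho_k C$.

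Finally I telescope: summing the one-step bound gives $\|\lambda^k_\bfy\|^2\le\|\lambda^0_\bfy\|^2+2C\sum_{j=0}^{k-1}\rho_j$. Multiplying by $\rho_k^{-1}$ and using $\rho_j/\rho_k=\tau^{k-j}$ (since $\rho_k=\tepsilon_0^{-1}\tau^{-k}$), I bound $\rho_k^{-1}\sum_{j=0}^{k-1}\rho_j=\sum_{i=1}^{k}\tau^i\le\tau/(1-\tau)\le 1/(1-\tau)$, while $\rho_k^{-1}=\tepsilon_0\tau^k\le 1$ (as $\tepsilon_0\le 1$) controls the initial term, so $\rho_k^{-1}\|\lambda^k_\bfy\|^2\le\|\lambda^0_\bfy\|^2+2C/(1-\tau)$, which is exactly \eqref{ly-cnstr}. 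The main obstacle is the lower bound in the second paragraph: the whole estimate hinges on recognizing that $y^{k+1}$ is an approximate maximizer of the concave $y$-slice of $\AL_\bfy$, and on correctly relaying that approximate optimality through \eqref{p-ineq} down to the uniform lower bound $f^*_{\rm low}-\tepsilon_0 D_\bfy$; the remaining steps are routine geometric-sum bookkeeping.
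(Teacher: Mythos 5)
Your proposal is correct and follows essentially the same route as the paper's proof: you transfer the $y$-stationarity to $\AL_\bfy$ via $\partial_y\AL=\partial_y\AL_\bfy$, use concavity of $\AL_\bfy(x^{k+1},\cdot,\lambda^k_\bfy;\rho_k)$ to get the $D_\bfy\epsilon_k$ gap to the max, relay it through \eqref{p-ineq} with $F_{\rm hi}$ and $f^*_{\rm low}$ to obtain the one-step inequality $\|\lambda^{k+1}_\bfy\|^2\leq\|\lambda^k_\bfy\|^2+2\rho_k(F_{\rm hi}-f^*_{\rm low}+D_\bfy\tepsilon_0)$, and then telescope. The only cosmetic difference is the bookkeeping at the end (your direct computation $\rho_k^{-1}\sum_{j<k}\rho_j=\sum_{i=1}^{k}\tau^i\leq\tau/(1-\tau)$, which also absorbs the $k=0$ case via $\rho_0\geq1$, versus the paper's $\sum_t\rho_t\leq\rho_{k-1}/(1-\tau)$ with a separate check at $k=0$); these are equivalent, and your version is in fact marginally tighter.
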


\begin{proof}
One can observe from \eqref{def-r} and Algorithm \ref{AL-alg} that$\Delta \geq 0$ and $\rho_0\geq1>\tau>0$, 
which imply that \eqref{ly-cnstr} holds for $k=0$. It remains to show that \eqref{ly-cnstr} holds for all $1\leq k\in\bbK-1$.

Since $(x^{t+1},y^{t+1})$ is an $\epsilon_t$-primal-dual stationary point of \eqref{AL-sub} for all $0\leq t\in\bbK-1$, it follows from Definition \ref{def2} that there exists some $u\in\partial_y\AL(x^{t+1},y^{t+1},\lambda^t_\bfx,\lambda^t_\bfy;\rho_t)$ with $\|u\|\leq\epsilon_t$. Notice from \eqref{AL} and \eqref{y-AL} that $\partial_y\AL(x^{t+1},y^{t+1},\lambda^t_\bfx,\lambda^t_\bfy;\rho_t)=\partial_y\AL_\bfy(x^{t+1},y^{t+1},\lambda^t_\bfy;\rho_t)$. 
Hence,  $u\in\partial_y\AL_\bfy(x^{t+1},y^{t+1},\lambda^t_\bfy;\rho_t)$. Also, observe from \eqref{prob}, \eqref{y-AL} and Assumption \ref{a1} that $\AL_\bfy(x^{t+1},\cdot,\lambda^t_\bfy;\rho_t)$ is concave. Using this, \eqref{mmax-D}, $u\in\partial_y\AL_\bfy(x^{t+1},y^{t+1},\lambda^t_\bfy;\rho_t)$ and $\|u\|\leq\epsilon_t$, we obtain
\begin{align*}
\AL_\bfy(x^{t+1},y,\lambda^t_\bfy;\rho_t)\leq&\ \AL_\bfy(x^{t+1},y^{t+1},\lambda^t_\bfy;\rho_t)+\langle u,y-y^{t+1}\rangle\\
\leq&\ \AL_\bfy(x^{t+1},y^{t+1},\lambda^t_\bfy;\rho_t)+D_\bfy\epsilon_t \qquad \forall y\in\mcY,
\end{align*}
which implies that
\beq\label{y-gap}
\max_y\AL_\bfy(x^{t+1},y,\lambda^t_\bfy;\rho_t)\leq\AL_\bfy(x^{t+1},y^{t+1},\lambda^t_\bfy;\rho_t)+D_\bfy\epsilon_t.
\eeq
By this, \eqref{y-AL} and \eqref{p-ineq}, one has
\begin{align*}
&\ f^*(x^{t+1})\overset{\eqref{p-ineq}}{\leq}\max_y\AL_\bfy(x^{t+1},y,\lambda^t_\bfy;\rho_t)\\
&\overset{\eqref{y-AL} \eqref{y-gap}}{\leq} F(x^{t+1},y^{t+1})-\frac{1}{2\rho_t}\left(\|[\lambda^t_\bfy+\rho_t d(x^{t+1},y^{t+1})]_+\|^2-\|\lambda^t_\bfy\|^2\right)+D_\bfy\epsilon_t\\
&\ \ \ = F(x^{t+1},y^{t+1})-\frac{1}{2\rho_t}\left(\|\lambda^{t+1}_\bfy\|^2-\|\lambda^t_\bfy\|^2\right)+D_\bfy\epsilon_t,
\end{align*}
where the equality follows from the relation $\lambda^{t+1}_\bfy=[\lambda^t_\bfy+\rho_t d(x^{t+1},y^{t+1})]_+$ (see Algorithm \ref{AL-alg}). Using the above inequality,\eqref{F-gap} and $\epsilon_t\leq 1$ (see Algorithm \ref{AL-alg}), we have
\begin{equation*}
\|\lambda^{t+1}_\bfy\|^2-\|\lambda^t_\bfy\|^2\leq2\rho_k\big(F(x^{t+1},y^{t+1})-f^*(x^{t+1})+D_\bfy\epsilon_t\big)\leq2\rho_t(\Delta+D_\bfy).
\end{equation*}
Summing up this inequality for $t=0,\dots,k-1$ with $1 \leq  k\in\bbK-1$ yields
\beq\label{l1-sum}
\|\lambda_\bfy^{k}\|^2\leq \|\lambda_\bfy^0\|^2+2(\Delta+D_\bfy)\sum_{t=0}^{k-1}\rho_t.
\eeq
Recall from Algorithm \ref{AL-alg} that $\rho_t=\epsilon_t^{-1}=\tau^{-t}$. Then we have $\sum_{t=0}^{k-1}\rho_t\leq\rho_{k-1}/(1-\tau)$. Using this, \eqref{l1-sum} and $\rho_{k}>\rho_{k-1}\geq1$ (see Algorithm \ref{AL-alg}), we obtain that for all $1 \leq k\in\bbK-1$,
\begin{align*}
\rho_k^{-1}\|\lambda_\bfy^k\|^2\leq\rho_k^{-1}\left( \|\lambda_\bfy^0\|^2+\frac{2(\Delta+D_\bfy)\rho_{k-1}}{1-\tau}\right)\leq\|\lambda_\bfy^0\|^2+\frac{2(\Delta+D_\bfy)}{1-\tau}.
\end{align*}
Hence, the conclusion holds as desired.
\end{proof}

The following lemma establishes an upper bound on $\|[d(x^{k+1},y^{k+1})]_+\|$ for $0\leq k\in\bbK-1$.

\begin{lemma}\label{l-ycnstr}
Suppose that Assumptions \ref{a1} and \ref{mfcq} hold. Let $D_\bfy$ and $\Delta$ be defined in \eqref{mmax-D} and \eqref{def-r}, and $\delta_d$ be given in Assumption \ref{mfcq}, and $\tau$ and $\rho_k$ be given in Algorithm \ref{AL-alg}. Suppose that $(x^{k+1},y^{k+1}, \lambda^{k+1}_\bfy)$ is generated by Algorithm \ref{AL-alg} for some $0\leq k\in\bbK-1$ with 
\beq\label{muk-bnd}
\rho_k\geq\frac{4\|\lambda_\bfy^0\|^2}{\delta_d^2}+\frac{8(\Delta+D_\bfy)}{\delta_d^2(1-\tau)}.
\eeq
Then we have
\beq\label{y-cnstr}
\|[d(x^{k+1},y^{k+1})]_+\|\leq\rho_k^{-1}\|\lambda^{k+1}_\bfy\|\leq2 \rho_k^{-1}\delta_d^{-1}(\Delta+D_\bfy).
\eeq
\end{lemma}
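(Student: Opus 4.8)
The plan is to prove the two inequalities separately, the first being elementary and the second carrying all the work. For the first inequality, $\|[d(x^{k+1},y^{k+1})]_+\|\le\rho_k^{-1}\|\lambda^{k+1}_\bfy\|$, I would argue purely coordinatewise from the update rule $\lambda^{k+1}_\bfy=[\lambda^k_\bfy+\rho_k d(x^{k+1},y^{k+1})]_+$ in Algorithm \ref{AL-alg}. Since $\lambda^k_\bfy\ge0$, for each $i$ one has $(\lambda^{k+1}_\bfy)_i=\max\{(\lambda^k_\bfy)_i+\rho_k d_i(x^{k+1},y^{k+1}),0\}\ge\rho_k\max\{d_i(x^{k+1},y^{k+1}),0\}=\rho_k([d(x^{k+1},y^{k+1})]_+)_i\ge0$; squaring and summing over $i$ gives $\rho_k\|[d(x^{k+1},y^{k+1})]_+\|\le\|\lambda^{k+1}_\bfy\|$. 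This part uses neither the Slater condition nor the lower bound \eqref{muk-bnd} on $\rho_k$.

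For the second inequality it suffices to show $\|\lambda^{k+1}_\bfy\|\le 2\delta_d^{-1}(\tepsilon_0+L_F)D_\bfy$. The first preparatory step is a crude bound on the incoming multiplier: Lemma \ref{l-lycnstr} gives $\rho_k^{-1}\|\lambda^k_\bfy\|^2\le\|\lambda_\bfy^0\|^2+2(F_{\rm hi}-f^*_{\rm low}+D_\bfy\tepsilon_0)/(1-\tau)$, while hypothesis \eqref{muk-bnd} says exactly that $\delta_d^2\rho_k/4$ dominates that same bracket; together they yield $\|\lambda^k_\bfy\|\le\rho_k\delta_d/2$, which is where the precise constant in \eqref{muk-bnd} is consumed. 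Next I would exploit the $\epsilon_k$-stationarity of $(x^{k+1},y^{k+1})$ in $y$: by Definition \ref{def2} and the identity $\partial_y\AL=\partial_y\AL_\bfy=\partial_y F(x^{k+1},y^{k+1})-\nabla_y d(x^{k+1},y^{k+1})\lambda^{k+1}_\bfy$ (recognizing the clipped argument as $\lambda^{k+1}_\bfy$ via the update rule), there is some $u$ with $\|u\|\le\epsilon_k$ lying in $\partial_y\ell(y^{k+1})$, where $\ell(y):=F(x^{k+1},y)-\langle\lambda^{k+1}_\bfy,d(x^{k+1},y)\rangle$ is concave because $F(x^{k+1},\cdot)$ is concave, each $d_i(x^{k+1},\cdot)$ is convex, and $\lambda^{k+1}_\bfy\ge0$.

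With $\hat y:=\hat y_{x^{k+1}}$ the Slater point of Assumption \ref{mfcq}(ii), so $d_i(x^{k+1},\hat y)\le-\delta_d$ for all $i$, I would then sandwich $\langle\lambda^{k+1}_\bfy,d(x^{k+1},y^{k+1})-d(x^{k+1},\hat y)\rangle$ from both sides. The concave gradient inequality $\ell(\hat y)\le\ell(y^{k+1})+\langle u,\hat y-y^{k+1}\rangle$, together with $L_F$-Lipschitz continuity of $F$, $\|u\|\le\epsilon_k\le\tepsilon_0$, and $\|y^{k+1}-\hat y\|\le D_\bfy$, gives the upper bound $\langle\lambda^{k+1}_\bfy,d(x^{k+1},y^{k+1})-d(x^{k+1},\hat y)\rangle\le(L_F+\tepsilon_0)D_\bfy$. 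For the lower bound, the Slater inequality gives $\langle\lambda^{k+1}_\bfy,-d(x^{k+1},\hat y)\rangle\ge\delta_d\|\lambda^{k+1}_\bfy\|_1\ge\delta_d\|\lambda^{k+1}_\bfy\|$, while the remaining term is handled by the exact identity $\langle\lambda^{k+1}_\bfy,d(x^{k+1},y^{k+1})\rangle=\rho_k^{-1}\langle\lambda^{k+1}_\bfy,\lambda^{k+1}_\bfy-\lambda^k_\bfy\rangle$, which follows coordinatewise from the update (the active and inactive coordinates both check out); hence $\langle\lambda^{k+1}_\bfy,d(x^{k+1},y^{k+1})\rangle\ge-\rho_k^{-1}\|\lambda^{k+1}_\bfy\|\|\lambda^k_\bfy\|\ge-\tfrac{\delta_d}{2}\|\lambda^{k+1}_\bfy\|$ by the first step. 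Adding the pieces gives the lower bound $\tfrac{\delta_d}{2}\|\lambda^{k+1}_\bfy\|$; comparing with the upper bound $(L_F+\tepsilon_0)D_\bfy$ yields $\|\lambda^{k+1}_\bfy\|\le2\delta_d^{-1}(\tepsilon_0+L_F)D_\bfy$, and dividing by $\rho_k$ finishes the claim.

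The step I expect to be the main obstacle is the lower bound on the cross term $\langle\lambda^{k+1}_\bfy,d(x^{k+1},y^{k+1})\rangle$: it is not sign-definite, and the whole estimate hinges on replacing it \emph{exactly} by $\rho_k^{-1}\langle\lambda^{k+1}_\bfy,\lambda^{k+1}_\bfy-\lambda^k_\bfy\rangle$ and then absorbing $\rho_k^{-1}\|\lambda^{k+1}_\bfy\|\|\lambda^k_\bfy\|$ using $\|\lambda^k_\bfy\|\le\rho_k\delta_d/2$. It is precisely the factor $1/2$ produced by \eqref{muk-bnd} that leaves room to beat the $\delta_d\|\lambda^{k+1}_\bfy\|$ coming from Slater and to close the argument with the clean constant $r=2\delta_d^{-1}(\tepsilon_0+L_F)D_\bfy$ from \eqref{def-r}.
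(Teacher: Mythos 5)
Your proposal is correct and follows essentially the same route as the paper's proof: the paper also combines the $\epsilon_k$-stationarity in $y$ with the Slater point $\hat y_{x^{k+1}}$, the bound $-\rho_k^{-1}\langle\lambda^{k+1}_\bfy,\lambda^k_\bfy\rangle\leq\langle\lambda^{k+1}_\bfy,d(x^{k+1},y^{k+1})\rangle$ (your exact identity is a marginally stronger form of the paper's \eqref{complim-ineq}), and Lemma \ref{l-lycnstr} together with \eqref{muk-bnd} to get $\rho_k^{-1}\|\lambda^k_\bfy\|\leq\delta_d/2$, yielding $\frac{\delta_d}{2}\|\lambda^{k+1}_\bfy\|\leq(\tepsilon_0+L_F)D_\bfy$. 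Your only cosmetic deviation is packaging the concavity of $F(x^{k+1},\cdot)$ and convexity of the $d_i(x^{k+1},\cdot)$ into the single concave function $\ell(y)=F(x^{k+1},y)-\langle\lambda^{k+1}_\bfy,d(x^{k+1},y)\rangle$, where the paper applies the two inequalities separately in the chain \eqref{bound-ineq}--\eqref{bnd-ineq}.
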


\begin{proof}
Suppose that $(x^{k+1},y^{k+1}, \lambda^{k+1}_\bfy)$ is generated by Algorithm \ref{AL-alg} for some $0\leq k\in\bbK-1$ with $\rho_k$ satisfying \eqref{muk-bnd}. 
Since $(x^{k+1}, y^{k+1})$ is an $\epsilon_k$-primal-dual stationary point of \eqref{AL-sub}, it follows from \eqref{AL} and Definition \ref{def2} that
\begin{align*}
\dist\left(0,\partial_yF(x^{k+1},y^{k+1})-\nabla_y d(x^{k+1},y^{k+1})[\lambda^k_\bfy+\rho_kd(x^{k+1},y^{k+1})]_+\right)\leq\epsilon_k.
\end{align*}
Besides, notice from Algorithm \ref{AL-alg} that  $\lambda^{k+1}_\bfy=[\lambda^k_\bfy+\rho_kd(x^{k+1},y^{k+1})]_+$. Hence, there exists some $u\in\partial_yF(x^{k+1},y^{k+1})$ such that
\beq\label{y-res}
\|u-\nabla_y d(x^{k+1},y^{k+1})\lambda^{k+1}_\bfy\|\leq\epsilon_k.
\eeq
By Assumption \ref{mfcq}(ii), there exists some $\hat y^{k+1}\in\mcY$ such that $-d_i(x^{k+1},\hat y^{k+1})\geq\delta_d$ for all $i$. 
Notice that $\langle\lambda^{k+1}_\bfy,\lambda^k_\bfy+\rho_kd(x^{k+1},y^{k+1})\rangle=\|[\lambda^k_\bfy+\rho_kd(x^{k+1},y^{k+1})]_+\|^2\geq0$, which implies that 
\beq \label{complim-ineq}
-\langle\lambda^{k+1}_\bfy,\rho_k^{-1}\lambda^k_\bfy\rangle \leq \langle\lambda^{k+1}_\bfy,d(x^{k+1},y^{k+1})\rangle.
\eeq
 Using these and \eqref{y-res}, we have
\begin{align}
& F(x^{k+1},\hat y^{k+1})-F(x^{k+1},y^{k+1})+\delta_d\|\lambda^{k+1}_\bfy\|_1-\rho_k^{-1}\langle\lambda^{k+1}_\bfy,\lambda^k_\bfy\rangle\nn\\
& \leq F(x^{k+1},\hat y^{k+1})-F(x^{k+1},y^{k+1})-\langle\lambda^{k+1}_\bfy,\rho_k^{-1}\lambda^k_\bfy+d(x^{k+1},\hat y^{k+1})\rangle\nn \\
& \overset{\eqref{complim-ineq}}\leq F(x^{k+1},\hat y^{k+1})-F(x^{k+1},y^{k+1})+\langle\lambda^{k+1}_\bfy,d(x^{k+1},y^{k+1})-d(x^{k+1},\hat y^{k+1}))\rangle\nn \\
& \leq \langle u, \hat y^{k+1}-y^{k+1}\rangle+\langle\nabla_yd(x^{k+1},y^{k+1})\lambda^{k+1}_\bfy,y^{k+1}-\hat y^{k+1}\rangle\nn \\
& = \langle u-\nabla_y d(x^{k+1},y^{k+1})\lambda^{k+1}_\bfy,y^{k+1}-\hat y^{k+1}\rangle\leq D_\bfy\epsilon_k, \label{bound-ineq}
\end{align}
where the first inequality is due to $\lambda^{k+1}_\bfy\geq 0$ and $-d_i(x^{k+1},\hat y^{k+1})\geq\delta_d$ for all $i$, the third inequality follows from  $u\in\partial_y F(x^{k+1}, y^{k+1})$, $\lambda^{k+1}_\bfy\geq 0$, the concavity of $F(x^{k+1},\cdot)$ and the convexity of $d_i(x^{k+1},\cdot)$, and the last inequality is due to \eqref{mmax-D} and \eqref{y-res}. 

In view of \eqref{Fhi} and \eqref{bound-ineq}, one has
\begin{align}
D_\bfy\epsilon_k+\Delta& \overset{\eqref{Fhi}}{\geq} D_\bfy\epsilon_k-F(x^{k+1},\hat y^{k+1})+F(x^{k+1},y^{k+1})\nn\\
&\overset{\eqref{bound-ineq}}{\geq} \delta_d\|\lambda^{k+1}_\bfy\|_1-\rho_k^{-1}\langle\lambda^{k+1}_\bfy,\lambda^k_\bfy\rangle\geq (\delta_d-\rho_k^{-1}\|\lambda^k_\bfy\|)\|\lambda^{k+1}_\bfy\|,\label{bnd-ineq}
\end{align}
where the last inequality is due to $\|\lambda^{k+1}_\bfy\|_1\geq\|\lambda^{k+1}_\bfy\|$. 
In addition, it follows from \eqref{ly-cnstr} and \eqref{muk-bnd} that
\[
\delta_d-\rho_k^{-1}\|\lambda^k_\bfy\|\overset{\eqref{ly-cnstr}}{\geq}\delta_d-\sqrt{\rho_k^{-1}\left(\|\lambda_\bfy^0\|^2+\frac{2(F_{\rm hi}-f^*_{\rm low}+D_\bfy)}{1-\tau}\right)}\ \overset{\eqref{muk-bnd}}{\geq}\frac{1}{2}\delta_d,
\]
which together with \eqref{bnd-ineq} yields
\[
\frac{1}{2}\delta_d\|\lambda^{k+1}_\bfy\|\leq(\delta_d-\rho_k^{-1}\|\lambda^k_\bfy\|)\|\lambda^{k+1}_\bfy\|\overset{\eqref{bnd-ineq}}{\leq} D_\bfy\epsilon_k+\Delta.
\]
The conclusion then follows from this, $\epsilon_k\leq 1$, and  the relations 
\[
\|[d(x^{k+1},y^{k+1})]_+\|\leq\rho_k^{-1}\|[\lambda^k_\bfy+\rho_k d(x^{k+1},y^{k+1})]_+\|=\rho_k^{-1}\|\lambda^{k+1}_\bfy\|.
\]
\end{proof}

The next lemma provides an upper bound on the amount of violation of the conditions in  \eqref{kkt1}, \eqref{kkt2} and \eqref{kkt4} at $(x,y,\lambda_\bfx,\lambda_\bfy)=(x^{k+1},y^{k+1},\tl^{k+1}_\bfx,\lambda^{k+1}_\bfy)$ for $0\leq k\in\bbK-1$, where $\tl^{k+1}_\bfx$ is given below.

\begin{lemma}\label{l-subdcnstr}
Suppose that Assumptions \ref{a1} and \ref{mfcq} hold. Let  $D_\bfy$ and $\Delta$ be defined in  \eqref{mmax-D} and \eqref{def-r}, and $\delta_d$ be given in Assumption \ref{mfcq}, and $\tau$, $\epsilon_k$, $\rho_k$ and $\lambda_\bfy^0$ be given in Algorithm \ref{AL-alg}.
Suppose that $(x^{k+1},y^{k+1}, \lambda^{k+1}_\bfx,\lambda^{k+1}_\bfy)$ is generated by Algorithm \ref{AL-alg} for  some $0\leq k\in\bbK-1$ with 
\beq\label{muk-1}
\rho_k\geq\frac{4\|\lambda_\bfy^0\|^2}{\delta_d^2\tau}+\frac{8(\Delta+D_\bfy)}{\delta_d^2\tau(1-\tau)}.
\eeq
Let 
\beq \label{def-tlx} 
 \tl^{k+1}_\bfx=[\lambda^k_\bfx+\rho_kc(x^{k+1})]_+.
\eeq
Then we have
\begin{align}
& \dist(0,\partial_x F(x^{k+1},y^{k+1})+\nabla c(x^{k+1})\tl^{k+1}_\bfx -\nabla_xd(x^{k+1},y^{k+1})\lambda^{k+1}_\bfy) \leq \epsilon_k, \label{pF-x}\\
& \dist\left(0,\partial_yF(x^{k+1},y^{k+1})-\nabla_y d(x^{k+1},y^{k+1})\lambda^{k+1}_\bfy\right)\leq\epsilon_k, \label{pF-y}\\
& \|[d(x^{k+1},y^{k+1})]_+\|\leq 2\rho_k^{-1}\delta_d^{-1}(\Delta+D_\bfy), \label{d-feas}\\ 
&|\langle\lambda^{k+1}_\bfy,d(x^{k+1},y^{k+1})\rangle|\leq 2\rho_k^{-1}\delta_d^{-1}(\Delta+D_\bfy) \max\{\|\lambda_\bfy^0\|,\ 2\delta_d^{-1}(\Delta+D_\bfy)\}. \label{d-complim}
\end{align}
\end{lemma}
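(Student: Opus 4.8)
The plan is to obtain \eqref{pF-x} and \eqref{pF-y} by reading off the stationarity of the AL subproblem, to get \eqref{d-feas} directly from Lemma \ref{l-ycnstr}, and to devote the real work to the complementarity estimate \eqref{d-complim}.

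First I would differentiate $\AL$ in \eqref{AL} with respect to $x$ and $y$. Because the penalty terms $\tfrac{1}{2\rho}\|[\lambda_\bfx+\rho c(x)]_+\|^2$ and $\tfrac{1}{2\rho}\|[\lambda_\bfy+\rho d(x,y)]_+\|^2$ are continuously differentiable ($[\cdot]_+^2$ being $C^1$), the calculus rule stated in Subsection \ref{notation} gives
\[
\partial_x\AL=\partial_x F+\nabla c(x)[\lambda_\bfx+\rho c(x)]_+-\nabla_x d(x,y)[\lambda_\bfy+\rho d(x,y)]_+,
\]
and similarly $\partial_y\AL=\partial_y F-\nabla_y d(x,y)[\lambda_\bfy+\rho d(x,y)]_+$. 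Evaluating these at $(x^{k+1},y^{k+1},\lambda^k_\bfx,\lambda^k_\bfy;\rho_k)$ and substituting the updates $\tl^{k+1}_\bfx=[\lambda^k_\bfx+\rho_kc(x^{k+1})]_+$ from \eqref{def-tlx} and $\lambda^{k+1}_\bfy=[\lambda^k_\bfy+\rho_kd(x^{k+1},y^{k+1})]_+$ from Algorithm \ref{AL-alg}, the $\epsilon_k$-primal-dual stationarity of $(x^{k+1},y^{k+1})$ for \eqref{AL-sub} (Definition \ref{def2}) yields \eqref{pF-x} and \eqref{pF-y} at once. For \eqref{d-feas}, since $0<\tau<1$ the condition \eqref{muk-1} is strictly stronger than \eqref{muk-bnd}, so Lemma \ref{l-ycnstr} applies and delivers \eqref{d-feas} together with the byproduct $\|\lambda^{k+1}_\bfy\|\leq 2\delta_d^{-1}(\tepsilon_0+L_F)D_\bfy$.

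The substantive step is \eqref{d-complim}. Using $\lambda^{k+1}_\bfy=[\lambda^k_\bfy+\rho_kd(x^{k+1},y^{k+1})]_+$ and the identity $\langle[a]_+,a\rangle=\|[a]_+\|^2$, I would first derive
\[
\langle\lambda^{k+1}_\bfy,d(x^{k+1},y^{k+1})\rangle=\rho_k^{-1}\big(\|\lambda^{k+1}_\bfy\|^2-\langle\lambda^{k+1}_\bfy,\lambda^k_\bfy\rangle\big).
\]
Both $\|\lambda^{k+1}_\bfy\|^2$ and $\langle\lambda^{k+1}_\bfy,\lambda^k_\bfy\rangle$ are nonnegative, and $\langle\lambda^{k+1}_\bfy,\lambda^k_\bfy\rangle\leq\|\lambda^{k+1}_\bfy\|\|\lambda^k_\bfy\|$ by Cauchy--Schwarz, so the absolute value of their difference is at most $\|\lambda^{k+1}_\bfy\|\max\{\|\lambda^{k+1}_\bfy\|,\|\lambda^k_\bfy\|\}$, giving
\[
|\langle\lambda^{k+1}_\bfy,d(x^{k+1},y^{k+1})\rangle|\leq\rho_k^{-1}\|\lambda^{k+1}_\bfy\|\max\{\|\lambda^{k+1}_\bfy\|,\|\lambda^k_\bfy\|\}.
\]

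The main obstacle is then controlling $\|\lambda^k_\bfy\|$ by exactly the quantity $\max\{\|\lambda_\bfy^0\|,\,2\delta_d^{-1}(\tepsilon_0+L_F)D_\bfy\}$ appearing in \eqref{d-complim}. For $k=0$ this is trivial since $\lambda^0_\bfy=\lambda_\bfy^0$. For $k\geq1$ the key observation is that the extra factor $1/\tau$ in \eqref{muk-1} (relative to \eqref{muk-bnd}) is calibrated precisely so that, via $\rho_{k-1}=\tau\rho_k$, condition \eqref{muk-1} at index $k$ forces \eqref{muk-bnd} to hold at index $k-1$; applying Lemma \ref{l-ycnstr} to the previous iterate then yields $\|\lambda^k_\bfy\|\leq 2\delta_d^{-1}(\tepsilon_0+L_F)D_\bfy$. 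Inserting this bound together with $\|\lambda^{k+1}_\bfy\|\leq 2\delta_d^{-1}(\tepsilon_0+L_F)D_\bfy$ into the displayed estimate produces \eqref{d-complim}, which completes the proof.
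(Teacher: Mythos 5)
Your proposal is correct and takes essentially the same route as the paper's proof: reading \eqref{pF-x}--\eqref{pF-y} off the $\epsilon_k$-stationarity of the AL subproblem with the substitutions \eqref{def-tlx} and $\lambda^{k+1}_\bfy=[\lambda^k_\bfy+\rho_kd(x^{k+1},y^{k+1})]_+$, and invoking Lemma \ref{l-ycnstr} both at index $k$ (since \eqref{muk-1} implies \eqref{muk-bnd}) and at index $k-1$ (via $\rho_{k-1}=\tau\rho_k$, which is exactly the paper's use of the extra factor $1/\tau$) to bound $\|\lambda^{k+1}_\bfy\|$ and $\|\lambda^k_\bfy\|$. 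Your exact identity $\langle\lambda^{k+1}_\bfy,d(x^{k+1},y^{k+1})\rangle=\rho_k^{-1}\big(\|\lambda^{k+1}_\bfy\|^2-\langle\lambda^{k+1}_\bfy,\lambda^k_\bfy\rangle\big)$ is merely a compact repackaging of the paper's two one-sided estimates (its \eqref{complim-ineq} together with the Cauchy--Schwarz bound on $\langle\lambda^{k+1}_\bfy,[d(x^{k+1},y^{k+1})]_+\rangle$), so the substance is identical.
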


\begin{proof}
Suppose that $(x^{k+1},y^{k+1}, \lambda^{k+1}_\bfx,\lambda^{k+1}_\bfy)$ is generated by Algorithm \ref{AL-alg} for  some $0\leq k\in\bbK-1$ with $\rho_k$ satisfying \eqref{muk-1}. Since $(x^{k+1},y^{k+1})$ is an $\epsilon_k$-primal-dual stationary point of \eqref{AL-sub}, it then follows from Definition \ref{def2} that
\beq\label{stationary}
\dist\big(0,\partial_x\AL(x^{k+1},y^{k+1},\lambda^k_\bfx,\lambda^k_\bfy;\rho_k)\big)\leq\epsilon_k,\  \dist\big(0,\partial_y\AL(x^{k+1},y^{k+1},\lambda^k_\bfx,\lambda^k_\bfy;\rho_k)\big)\leq\epsilon_k.
\eeq
Observe from Algorithm \ref{AL-alg} that $\lambda^{k+1}_\bfy=[\lambda^k_\bfy+\rho_kd(x^{k+1},y^{k+1})]_+$. In view of this, \eqref{AL} and \eqref{def-tlx}, one has
\begin{align*}
\partial_x\AL(x^{k+1},y^{k+1},\lambda^k_\bfx,\lambda^k_\bfy;\rho_k)=&\ \partial_x F(x^{k+1},y^{k+1})+\nabla c(x^{k+1})[\lambda^k_\bfx+\rho_kc(x^{k+1})]_+\\
&\ -\nabla_xd(x^{k+1},y^{k+1})[\lambda^k_\bfy+\rho_kd(x^{k+1},y^{k+1})]_+\\
=&\ \partial_x F(x^{k+1},y^{k+1})+\nabla c(x^{k+1})\tl^{k+1}_\bfx -\nabla_xd(x^{k+1},y^{k+1})\lambda^{k+1}_\bfy, \\
\partial_y\AL(x^{k+1},y^{k+1},\lambda^k_\bfx,\lambda^k_\bfy;\rho_k)=& \ \partial_y F(x^{k+1},y^{k+1})-\nabla_y d(x^{k+1},y^{k+1})\lambda^{k+1}_\bfy.
\end{align*}
These relations together with  \eqref{stationary} imply that \eqref{pF-x} and \eqref{pF-y} hold.

Notice from Algorithm \ref{AL-alg} that $0<\tau<1$, which together with \eqref{muk-1} implies that \eqref{muk-bnd} holds for $\rho_k$. It then follows that \eqref{y-cnstr} holds, which immediately yields \eqref{d-feas} and 
\beq \label{lambday-bnd1}
\|\lambda^{k+1}_\bfy\|\leq2\delta_d^{-1}(\Delta+D_\bfy).
\eeq
Claim that 
\beq \label{lambday-bnd2}
\|\lambda^k_\bfy\|\leq\max\{\|\lambda_\bfy^0\|,\ 2\delta_d^{-1}(\Delta+D_\bfy)\}.
\eeq
Indeed, \eqref{lambday-bnd2} clearly holds if $k=0$.  We now assume that $k>0$. Notice from 
Algorithm \ref{AL-alg} that $\rho_{k-1}=\tau\rho_k$, which together with \eqref{muk-1} implies that \eqref{muk-bnd} holds with $k$ replaced by $k-1$. By this and Lemma \ref{l-ycnstr} with $k$ replaced by $k-1$, one can conclude that $\|\lambda^k_\bfy\|\leq2\delta_d^{-1}(\Delta+D_\bfy)$ and hence \eqref{lambday-bnd2} holds.

We next show that \eqref{d-complim} holds. Indeed, by  $\lambda^{k+1}_\bfy\geq 0$, \eqref{complim-ineq}, \eqref{d-feas}, \eqref{lambday-bnd1} and \eqref{lambday-bnd2}, one has 
\begin{align*}
\langle\lambda^{k+1}_\bfy,d(x^{k+1},y^{k+1})\rangle & \ \ \leq \ \langle\lambda^{k+1}_\bfy,[d(x^{k+1},y^{k+1})]_+\rangle \leq\|\lambda^{k+1}_\bfy\|\|[d(x^{k+1},y^{k+1})]_+\|\\ 
& \overset{\eqref{d-feas}\eqref{lambday-bnd1}}\leq4\rho_k^{-1}\delta_d^{-2}(\Delta+D_\bfy)^2,\\
\langle\lambda^{k+1}_\bfy,d(x^{k+1},y^{k+1})\rangle& \overset{\eqref{complim-ineq}}\geq\langle\lambda^{k+1}_\bfy,-\rho_k^{-1}\lambda^k_\bfy\rangle\geq-\rho_k^{-1}\|\lambda^{k+1}_\bfy\|\|\lambda^k_\bfy\|\\
&\ \geq-2\rho_k^{-1}\delta_d^{-1}(\Delta+D_\bfy) \max\{\|\lambda_\bfy^0\|,\ 2\delta_d^{-1}(\Delta+D_\bfy)\}.
\end{align*}
These relations imply that \eqref{d-complim} holds.
\end{proof}

The following lemma provides an upper bound on $\max_y\AL(x^k_{\rm init},y,\lambda^k_\bfx,\lambda^k_\bfy;\rho_k)$ for $0\leq k\in\bbK-1$, which will subsequently be used to derive an upper bound for  $\max_y\AL(x^{k+1},y,\lambda^k_\bfx,\lambda^k_\bfy;\rho_k)$.

\begin{lemma}
Suppose that Assumptions \ref{a1}, \ref{knownfeas} and \ref{mfcq} hold. Let $\{(\lambda^k_\bfx,\lambda^k_\bfy)\}_{k\in\bbK}$ be generated by Algorithm \ref{AL-alg}, $\AL$, $D_\bfy$,  $F_{\rm hi}$ and $\Delta$ be defined in \eqref{AL}, \eqref{mmax-D}, \eqref{Fhi} and \eqref{def-r}, and $\tau$, $\rho_k$, $\Lambda$ and $x^k_{\rm init}$ be given in Algorithm \ref{AL-alg}. Then for all $0\leq k\in\bbK-1$, we have
\beq\label{init-upper}
\max_y\AL(x^k_{\rm init},y,\lambda^k_\bfx,\lambda^k_\bfy;\rho_k)\leq \Delta+F_{\rm hi}+\Lambda+\frac{1}{2}(1+\|\lambda_\bfy^0\|^2)+\frac{\Delta+D_\bfy}{1-\tau}.
\eeq
\end{lemma}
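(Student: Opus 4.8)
The plan is to exploit the separable structure of the augmented Lagrangian and the defining inequality of $x^k_{\rm init}$. First I would record that the $c$-penalty term $G_\bfx(x):=\tfrac{1}{2\rho_k}(\|[\lambda^k_\bfx+\rho_k c(x)]_+\|^2-\|\lambda^k_\bfx\|^2)$ does not depend on $y$, so that by \eqref{AL} and \eqref{y-AL},
\[
\max_y \AL(x^k_{\rm init},y,\lambda^k_\bfx,\lambda^k_\bfy;\rho_k)=G_\bfx(x^k_{\rm init})+\max_y\AL_\bfy(x^k_{\rm init},y,\lambda^k_\bfy;\rho_k),
\]
and then bound the two summands separately.

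For the first summand, note that $G_\bfx(x)=\AL_\bfx(x,y^k,\lambda^k_\bfx;\rho_k)-F(x,y^k)$ by \eqref{x-AL}. Since both branches of \eqref{xinit} guarantee $\AL_\bfx(x^k_{\rm init},y^k,\lambda^k_\bfx;\rho_k)\le\AL_\bfx(x_\bff,y^k,\lambda^k_\bfx;\rho_k)$, I obtain $G_\bfx(x^k_{\rm init})\le F(x_\bff,y^k)+G_\bfx(x_\bff)-F(x^k_{\rm init},y^k)$. To control $G_\bfx(x_\bff)$ I would use the safeguard $\lambda^k_\bfx\in\cB^+_\Lambda$ (so $\lambda^k_\bfx\ge0$ and $\|\lambda^k_\bfx\|\le\Lambda$) together with the near-feasibility $\|[c(x_\bff)]_+\|\le\sqrt\varepsilon$, via the elementary inequality $\tfrac{1}{2\rho}(\|[a+\rho b]_+\|^2-\|a\|^2)\le\|a\|\,\|[b]_+\|+\tfrac{\rho}{2}\|[b]_+\|^2$, valid for $a\ge0$ (split coordinates by the sign of $b_i$: for $b_i\le0$ one has $[\max(a_i+\rho b_i,0)]^2\le a_i^2$, and for $b_i>0$ one expands $(a_i+\rho b_i)^2$). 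This yields $G_\bfx(x_\bff)\le\Lambda\sqrt\varepsilon+\tfrac{\rho_k}{2}\varepsilon\le\Lambda+\tfrac12\tau^{-1}$, where the last step uses $\sqrt\varepsilon\le1$ and $\rho_k\varepsilon\le\tau^{-1}$; the latter follows (recall $\rho_k=\epsilon_k^{-1}$, $\epsilon_k=\tepsilon_0\tau^k$) from $\epsilon_k\ge\epsilon_K>\tau\varepsilon$, which I would verify from $K=\lceil(\log\varepsilon-\log\tepsilon_0)/\log\tau\rceil_+$ and the input condition $\tepsilon_0>\tau\varepsilon$.

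For the second summand, let $y^k_*\in\mcY$ attain $\max_y\AL_\bfy(x^k_{\rm init},y,\lambda^k_\bfy;\rho_k)$ (the maximizer exists because $\mcY$ is compact and $\AL_\bfy(x^k_{\rm init},\cdot)$ is concave and upper semicontinuous). Dropping the nonpositive penalty $-\tfrac{1}{2\rho_k}\|[\lambda^k_\bfy+\rho_k d(x^k_{\rm init},y)]_+\|^2$ gives $\max_y\AL_\bfy(x^k_{\rm init},y,\lambda^k_\bfy;\rho_k)\le F(x^k_{\rm init},y^k_*)+\tfrac{1}{2\rho_k}\|\lambda^k_\bfy\|^2$, and I would then invoke Lemma \ref{l-lycnstr}, namely \eqref{ly-cnstr}, to get $\tfrac{1}{2\rho_k}\|\lambda^k_\bfy\|^2\le\tfrac12\|\lambda^0_\bfy\|^2+(F_{\rm hi}-f^*_{\rm low}+D_\bfy\tepsilon_0)/(1-\tau)$.

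Finally I would add the two bounds. The term $F(x_\bff,y^k)$ is at most $F_{\rm hi}$, while the key cancellation is $F(x^k_{\rm init},y^k_*)-F(x^k_{\rm init},y^k)\le L_F\|y^k_*-y^k\|\le L_F D_\bfy$, obtained from $L_F$-Lipschitz continuity of $F$ and $y^k_*,y^k\in\mcY$ together with the diameter definition \eqref{mmax-D}. Summing $L_F D_\bfy$, $F_{\rm hi}$, $\Lambda+\tfrac12\tau^{-1}$, $\tfrac12\|\lambda^0_\bfy\|^2$ and $(F_{\rm hi}-f^*_{\rm low}+D_\bfy\tepsilon_0)/(1-\tau)$ reproduces exactly the right-hand side of \eqref{init-upper}. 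The only real obstacle is bookkeeping: pairing the two $F$-values at the common point $x^k_{\rm init}$ but different second arguments so that their difference is controlled by $L_F D_\bfy$ rather than the lossy $F_{\rm hi}-F_{\rm low}$; the penalty estimates themselves are routine.
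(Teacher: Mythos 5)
Your proposal is correct and follows essentially the same route as the paper's proof: it uses the comparison $\AL_\bfx(x^k_{\rm init},y^k,\lambda^k_\bfx;\rho_k)\le\AL_\bfx(x_\bff,y^k,\lambda^k_\bfx;\rho_k)$ from \eqref{xinit}, the elementary penalty bound with $\|\lambda^k_\bfx\|\le\Lambda$, $\|[c(x_\bff)]_+\|\le\sqrt{\varepsilon}$ and $\rho_k<\tau^{-1}\varepsilon^{-1}$, drops the nonpositive $d$-penalty and invokes \eqref{ly-cnstr}, and controls the mismatch in $y$-arguments by $L_F D_\bfy$, exactly as in the paper. Splitting $\AL$ as $G_\bfx+\AL_\bfy$ rather than as $\AL_\bfx$ minus the $d$-penalty is only a cosmetic reorganization of the same bookkeeping.
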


\begin{proof}
In view of \eqref{x-AL}, \eqref{xinit},  \eqref{Fhi} and $\|\lambda^k_\bfx\|\leq\Lambda$ (see Algorithm \ref{AL-alg}), one has 
\begin{align}
\AL_\bfx(x^k_{\rm init},y^k,\lambda^k_\bfx;\rho_k) & \overset{\eqref{xinit}}\leq\AL_\bfx(x_\bff,y^k,\lambda^k_\bfx;\rho_k) \overset{\eqref{x-AL}}= F(x_\bff,y^k)+\frac{1}{2\rho_k}\big(\|[\lambda^k_\bfx+\rho_k c(x_\bff)]_+\|^2-\|\lambda^k_\bfx\|^2\big)\nn\\
&\leq F(x_\bff,y^k)+\frac{1}{2\rho_k}\big((\|\lambda^k_\bfx\|+\rho_k\| [c(x_\bff)]_+\|)^2-\|\lambda^k_\bfx\|^2\big)\nn\\
&= F(x_\bff,y^k)+\|\lambda^k_\bfx\|\|[c(x_\bff)]_+\|+\frac{1}{2}\rho_k\| [c(x_\bff)]_+\|^2 \nn \\ 
&\overset{\eqref{Fhi}} \leq F_{\rm hi}+\Lambda\| [c(x_\bff)]_+\|+\frac{1}{2}\rho_k\| [c(x_\bff)]_+\|^2.\label{init-ineq}
\end{align}
In addition, one can observe from Algorithm \ref{AL-alg} that $\epsilon_k>\tau\varepsilon$ for all $0\leq k\in\bbK-1$. By this and the choice of $\rho_k$ in Algorithm \ref{AL-alg}, we obtain that $\rho_k=\epsilon_k^{-1}<\tau^{-1}\varepsilon^{-1}$ for all $0\leq k\in\bbK-1$. It then follows from this, \eqref{AL},  \eqref{x-AL}, \eqref{def-r}, \eqref{ly-cnstr}, \eqref{init-ineq}, $\|[c(x_\bff)]_+\|\leq \sqrt{\varepsilon}\leq1$, and the Lipschitz continuity of $F$ that
\begin{align*}
&\ \max_y\AL(x^k_{\rm init},y,\lambda^k_\bfx,\lambda^k_\bfy;\rho_k)\overset{\eqref{AL} \eqref{x-AL}}=\max_y\left\{\AL_\bfx(x^k_{\rm init},y,\lambda^k_\bfx;\rho_k)-\frac{1}{2\rho_k}\left(\|[\lambda^k_\bfy+\rho_k d(x^k_{\rm init},y)]_+\|^2-\|\lambda^k_\bfy\|^2\right)\right\}\\
&\leq \max_y\left\{\AL_\bfx(x^k_{\rm init},y,\lambda^k_\bfx;\rho_k)+\frac{1}{2\rho_k}\|\lambda^k_\bfy\|^2\right\}\\
&\overset{\eqref{x-AL}}=\max_y\left\{F(x^k_{\rm init},y)-F(x^k_{\rm init},y^k)+\AL_\bfx(x^k_{\rm init},y^k,\lambda^k_\bfx;\rho_k)+\frac{1}{2\rho_k}\|\lambda^k_\bfy\|^2\right\}\\
&\overset{\eqref{def-r}}\leq \Delta +\AL_\bfx(x^k_{\rm init},y^k,\lambda^k_\bfx;\rho_k)+\frac{1}{2\rho_k}\|\lambda^k_\bfy\|^2\\
&\leq \Delta+F_{\rm hi}+\Lambda\| [c(x_\bff)]_+\|+\frac{1}{2}\rho_k\| [c(x_\bff)]_+\|^2+\frac{1}{2}\|\lambda_\bfy^0\|^2+\frac{\Delta+D_\bfy}{1-\tau}\\
&\leq \Delta+F_{\rm hi}+\Lambda+\frac{1}{2}(\tau^{-1}+\|\lambda_\bfy^0\|^2)+\frac{\Delta+D_\bfy}{1-\tau},
\end{align*}
where  the third inequality follows from  \eqref{ly-cnstr} and \eqref{init-ineq}, and the last inequality follows from $\rho_k<\tau^{-1}\varepsilon^{-1}$ and $\|[c(x_\bff)]_+\|\leq \sqrt{\varepsilon}\leq1$.
\end{proof}

The next lemma shows that an approximate primal-dual stationary point  of \eqref{AL-sub} is found at each iteration of Algorithm~\ref{AL-alg}, and also provides an estimate of operation complexity for finding it.

\begin{lemma}\label{l-subp}
Suppose that Assumptions \ref{a1}, \ref{knownfeas} and \ref{mfcq} hold. Let $D_\bfx$, $D_\bfy$, $L_k$,  $F_{\rm hi}$ and $\Delta$ be defined in  \eqref{mmax-D}, \eqref{Lk}, \eqref{Fhi} and \eqref{def-r},  $\tau$, $\epsilon_k$,  $\rho_k$, $\Lambda$ and $\lambda_\bfy^0$ be given in Algorithm \ref{AL-alg}, and
\begin{align}
\alpha_k=&\ \min\left\{1,\sqrt{4\epsilon_k/(D_\bfy L_k)}\right\},\label{mmax-omega}\\
\delta_k=&\ (2+\alpha_k^{-1})L_k D_\bfx^2+\max\left\{\epsilon_k/D_\bfy,\alpha_k L_k/4\right\}D_\bfy^2,\label{mmax-zeta}\\
M_k=&\ \frac{16\max\left\{1/(2L_k),\min\left\{D_\bfy/\epsilon_k,4/(\alpha_k L_k)\right\}\right\}\rho_k}{\left[(3L_k+\epsilon_k/(2D_\bfy))^2/\min\{L_k,\epsilon_k/(2D_\bfy)\}+ 3L_k+\epsilon_k/(2D_\bfy)\right]^{-2}\epsilon_k^2}\nn\\
&\times\Bigg(\delta_k+2\alpha_k^{-1}\bigg(\Delta+\frac{\Lambda^2}{2\rho_k}+\frac{3}{2}\|\lambda_\bfy^0\|^2+\frac{3(\Delta+D_\bfy)}{1-\tau}+\rho_kd_{\rm hi}^2+\frac{\epsilon_k D_\bfy}{4}+L_k D_\bfx^2\bigg)\Bigg)\label{mmax-Mk}\\
T_k=&\ \Bigg\lceil16\left(2\Delta+\Lambda+\frac{1}{2}(\tau^{-1}+\|\lambda_\bfy^0\|^2)+\frac{\Delta+D_\bfy}{1-\tau}+\frac{\Lambda^2}{2\rho_k}+\frac{\epsilon_k D_\bfy}{4}\right) L_k\epsilon_k^{-2}\nn\\
&\ +8(1+4D_\bfy^2L_k^2\epsilon_k^{-2})\rho_k^{-1}-1\Bigg\rceil_+,\label{mmax-Tk}\\
N_k=&\ \left(\left\lceil96\sqrt{2}\left(1+\left(24L_k+4\epsilon_k/D_\bfy\right)L_k^{-1}\right)\right\rceil+2\right)\max\left\{2,\sqrt{D_\bfy L_k\epsilon_k^{-1}}\right\}\notag\\
&\ \times\left((T_k+1)(\log M_k)_++T_k+1+2T_k\log(T_k+1) \right).\label{mmax-N}
\end{align}
Then for all $0\leq k\in\bbK-1$, Algorithm~\ref{AL-alg} finds an $\epsilon_k$-primal-dual stationary point $(x^{k+1},y^{k+1})$ of problem \eqref{AL-sub} satisfying 
\begin{align}
\max_y\AL(x^{k+1},y,\lambda^k_\bfx,\lambda^k_\bfy;\rho_k)\leq&\ \Delta+F_{\rm hi}+\Lambda+\frac{1}{2}(\tau^{-1}+\|\lambda_\bfy^0\|^2)+\frac{\Delta+D_\bfy}{1-\tau}\nn\\
&\ +\frac{\epsilon_k D_\bfy}{4}+\frac{1}{2\rho_k}\left(L_k^{-1}\epsilon_k^2+4D_\bfy^2L_k\right).\label{upperbnd}
\end{align}
Moreover, the total number of evaluations of $\nabla f$, $\nabla c$, $\nabla d$ and proximal operator of $p$ and $q$ performed in iteration $k$ of Algorithm~\ref{AL-alg} is no more than $N_k$, respectively.
\end{lemma}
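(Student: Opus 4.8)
The plan is to recognize the AL subproblem \eqref{AL-sub} as an instance of the nonconvex-concave problem \eqref{mmax-prob} and then invoke Theorem~\ref{mmax-thm}, tracking how each quantity in that theorem specializes under the parameter choices made in step~3 of Algorithm~\ref{AL-alg}. First I would verify that the smooth part of $\AL(\cdot,\cdot,\lambda^k_\bfx,\lambda^k_\bfy;\rho_k)$, namely $\bh(x,y)=f(x,y)+(\|[\lambda^k_\bfx+\rho_kc(x)]_+\|^2-\|\lambda^k_\bfx\|^2)/(2\rho_k)-(\|[\lambda^k_\bfy+\rho_kd(x,y)]_+\|^2-\|\lambda^k_\bfy\|^2)/(2\rho_k)$, satisfies Assumption~\ref{mmax-a} with $L_{\nabla\bh}=L_k$. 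Concavity in $y$ is immediate from Assumption~\ref{a1}, since $f(x,\cdot)$ is concave, each $d_i(x,\cdot)$ is convex, $[\cdot]_+^2$ is convex nondecreasing, and the $c$-term is independent of $y$; the smoothness constant $L_k$ in \eqref{Lk} follows from a routine gradient-Lipschitz estimate of the two squared-penalty terms, using $L_c$-Lipschitzness and $L_{\nabla c}$-smoothness of $c$ (and likewise for $d$), the bounds $\|c(x)\|\le c_{\rm hi}$, $\|d(x,y)\|\le d_{\rm hi}$ from \eqref{cdhi}, and $\|\lambda^k_\bfx\|\le\Lambda$. Thus \eqref{AL-sub} is a special case of \eqref{mmax-prob}, and applying Theorem~\ref{mmax-thm} with $\epsilon=\epsilon_k$, $\hat\epsilon_0=\epsilon_k/(2\sqrt{\rho_k})$, $\hat x^0=x^k_{\rm init}$, $\hat y^0=y^k$, $L_{\nabla\bh}=L_k$ yields the $\epsilon_k$-primal-dual stationary point $(x^{k+1},y^{k+1})$.

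Next I would derive \eqref{upperbnd} from \eqref{upperbnd-old}. Substituting $2\hat\epsilon_0^2=\epsilon_k^2/(2\rho_k)$ turns the perturbation term $2\hat\epsilon_0^2(L_k^{-1}+4D_\bfy^2L_k\epsilon_k^{-2})$ into $(L_k^{-1}\epsilon_k^2+4D_\bfy^2L_k)/(2\rho_k)$, which is exactly the last term of \eqref{upperbnd}; bounding $\max_y\AL(x^k_{\rm init},y,\lambda^k_\bfx,\lambda^k_\bfy;\rho_k)$ via \eqref{init-upper} supplies the remaining terms.

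The bulk of the work, and the main obstacle, is to show that the \emph{computable} quantities $T_k$ in \eqref{mmax-Tk} and $M_k$ in \eqref{mmax-Mk} dominate the corresponding \emph{non-computable} quantities $\wT$ in \eqref{mmax-K} and the logarithmic argument of $\widehat N$ in \eqref{mmax-N-old} after substitution; since every term entering those expressions is nonnegative and the bounds of Theorem~\ref{mmax-thm} are monotone in the optimality gaps, this gives the iteration bound $T_k+1$ and the operation bound $N_k$. To this end I would establish three elementary estimates for $\AL^*_k:=\min_x\max_y\AL(x,y,\lambda^k_\bfx,\lambda^k_\bfy;\rho_k)$ and $\AL_{{\rm low},k}:=\min_{x,y}\AL(x,y,\lambda^k_\bfx,\lambda^k_\bfy;\rho_k)$: (a) an upper bound $\AL^*_k\le F_{\rm hi}+\|\lambda^k_\bfy\|^2/(2\rho_k)$, obtained by evaluating $\max_y\AL$ at any feasible $x$ of \eqref{prob} (which exists since \eqref{prob} is solvable), where the $c$-penalty is $\le 0$ because $c(x)\le 0$ and $\lambda^k_\bfx\ge 0$ and the $d$-penalty is dropped; (b) a lower bound $\AL^*_k\ge f^*_{\rm low}-\Lambda^2/(2\rho_k)$, via $\AL(x,y,\cdot)\ge\AL_\bfy(x,y,\lambda^k_\bfy;\rho_k)-\Lambda^2/(2\rho_k)$ together with \eqref{p-ineq}; and (c) a lower bound $\AL_{{\rm low},k}\ge F_{\rm low}-\Lambda^2/(2\rho_k)-\|\lambda^k_\bfy\|^2/(2\rho_k)-\rho_kd_{\rm hi}^2$, from $F\ge F_{\rm low}$, $\|[d]_+\|\le d_{\rm hi}$ and $\|\lambda^k_\bfx\|\le\Lambda$.

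Feeding (a) and (c), together with the dual-norm bound \eqref{ly-cnstr} $\rho_k^{-1}\|\lambda^k_\bfy\|^2\le\|\lambda^0_\bfy\|^2+2(F_{\rm hi}-f^*_{\rm low}+D_\bfy\tepsilon_0)/(1-\tau)$, into $\AL^*_k-\AL_{{\rm low},k}+\epsilon_kD_\bfy/4+L_kD_\bfx^2$ reproduces, with slack, the bracket of $M_k$; feeding \eqref{init-upper} and (b) into $\max_y\AL(x^k_{\rm init},y,\cdot)-\AL^*_k+\epsilon_kD_\bfy/4$ reproduces exactly the bracket of $T_k$. The delicate point is that the looser upper bound on $\AL^*_k$ available directly from \eqref{init-upper} does \emph{not} suffice for the $M_k$ bracket, so the tighter feasibility-based estimate (a) must be used there, whereas for $T_k$ one pairs \eqref{init-upper} with the lower bound (b); once (a)--(c) are in place, matching the remaining constants against \eqref{mmax-omega}--\eqref{mmax-N} and invoking the monotonicity of the bounds in Theorem~\ref{mmax-thm} (so that $T_k\ge\wT$ and $M_k$ dominates the logarithmic argument, hence $N_k\ge\widehat N$) completes the proof.
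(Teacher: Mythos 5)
Your proposal is correct and follows essentially the same route as the paper's proof: verify $L_k$-smoothness of the smooth AL part via the Lipschitz/boundedness estimates, invoke Theorem~\ref{mmax-thm} with $\epsilon=\epsilon_k$, $\hat\epsilon_0=\epsilon_k/(2\sqrt{\rho_k})$, $\hat x^0=x^k_{\rm init}$, and establish exactly the paper's three estimates — your (a), (b), (c) are the paper's \eqref{e2}, \eqref{e1}, \eqref{e3} — combined with \eqref{init-upper} and \eqref{ly-cnstr} to match the brackets of $T_k$ and $M_k$. Your (c) is even marginally tighter than the paper's (which has $\rho_k^{-1}\|\lambda^k_\bfy\|^2$ in place of your $\|\lambda^k_\bfy\|^2/(2\rho_k)$, obtained by discarding the $+\|\lambda^k_\bfy\|^2/(2\rho_k)$ term in \eqref{AL}), but either constant is dominated by the $M_k$ bracket, so this changes nothing.
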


\begin{proof}
Observe from \eqref{prob} and \eqref{AL} that problem \eqref{AL-sub} can be viewed as
\[
\min_x\max_y\{h(x,y)+p(x)-q(y)\},
\]
where
\[
h(x,y)=f(x,y)+\frac{1}{2\rho _k}\left(\|[\lambda^k_\bfx+\rho_k c(x)]_+\|^2-\|\lambda^k_\bfx\|^2\right)-\frac{1}{2\rho_k}\left(\|[\lambda^k_\bfy+\rho_k d(x,y)]_+\|^2-\|\lambda^k_\bfy\|^2\right).
\]
Notice that
\begin{align*}
& \nabla_x h(x,y)=\nabla_x f(x,y)+\nabla c(x)[\lambda^k_\bfx+\rho_kc(x)]_++\nabla_x d(x,y)[\lambda^k_\bfy+\rho_kd(x,y)]_+, \\
& \nabla_y h(x,y)=\nabla_y f(x,y)+\nabla_y d(x,y)[\lambda^k_\bfy+\rho_kd(x,y)]_+.
\end{align*}
It follows from Assumption \ref{a1}(iii) that 
\[
\|\nabla c(x)\|\leq L_c,\quad\|\nabla d(x,y)\| \leq L_d \qquad \forall (x,y)\in\mcX\times\mcY.
\]
In view of the above relations, \eqref{cdhi} and Assumption \ref{a1}, one can observe that  $\nabla c(x)[\lambda^k_\bfx+\rho_kc(x)]_+$ is $(\rho_kL_c^2+\rho_kc_{\rm hi}L_{\nabla c}+\|\lambda^k_\bfx\| L_{\nabla c})$-Lipschitz continuous on $\mcX$, and $\nabla d(x,y)[\lambda^k_\bfy+\rho_kd(x,y)]_+$ is $(\rho_kL_d^2+\rho_kd_{\rm hi}L_{\nabla d}+\|\lambda^k_\bfy\|L_{\nabla d})$-Lipschitz continuous on $\mcX\times\mcY$. Using these and the fact that $\nabla f(x,y)$ is $L_{\nabla f}$-Lipschitz continuous on $\mcX\times\mcY$, we can see that $h(x,y)$ is $L_k$-smooth on $\mcX\times\mcY$ for all $0\leq k\in\bbK-1$, where $L_k$ is given in \eqref{Lk}. Consequently, it follows from Theorem \ref{mmax-thm} that Algorithm \ref{mmax-alg2} can be suitably applied to problem \eqref{AL-sub} for finding an $\epsilon_k$-primal-dual stationary point $(x^{k+1},y^{k+1})$ of it.

In addition, by \eqref{AL},\eqref{F-gap}, \eqref{y-AL}, \eqref{p-ineq} and $\|\lambda^k_\bfx\|\leq\Lambda$ (see Algorithm \ref{AL-alg}), one has
\begin{align}
&\ \min_x\max_y\AL(x,y,\lambda^k_\bfx,\lambda^k_\bfy;\rho_k)\overset{\eqref{AL} \eqref{y-AL}}=\min_x\max_y\left\{\AL_\bfy(x,y,\lambda^k_\bfy;\rho_k)+\frac{1}{2\rho_k}\left(\|[\lambda^k_\bfx+\rho_k c(x)]_+\|^2-\|\lambda^k_\bfx\|^2\right)\right\}\nn\\
&\overset{\eqref{p-ineq}}{\geq}\min_x \left\{f^*(x)+\frac{1}{2\rho_k}\left(\|[\lambda^k_\bfx+\rho_k c(x)]_+\|^2-\|\lambda^k_\bfx\|^2\right)\right\} \overset{\eqref{F-gap}}{\geq} F_{\rm low}-\frac{1}{2\rho_k}\|\lambda^k_\bfx\|^2\geq F_{\rm low}-\frac{\Lambda^2}{2\rho_k}.\label{e1}
\end{align}
Let $(x^*,y^*)$ be an optimal solution of \eqref{prob}. It then follows that $c(x^*)\leq0$. Using this, \eqref{AL}, \eqref{Fhi} and \eqref{ly-cnstr}, we obtain that
\begin{align}
&\ \ \min_x\max_y\AL(x,y,\lambda^k_\bfx,\lambda^k_\bfy;\rho_k)\leq\max_y\AL(x^*,y,\lambda^k_\bfx,\lambda^k_\bfy;\rho_k)\nn\\
&\ \overset{\eqref{AL}}{=}\max_y\left\{F(x^*,y)+\frac{1}{2\rho_k}\left(\|[\lambda^k_\bfx+\rho_kc(x^*)]_+\|^2-\|\lambda^k_\bfx\|^2\right)-\frac{1}{2\rho_k}\left(\|[\lambda^k_\bfy+\rho_k d(x^*,y)]_+\|^2-\|\lambda^k_\bfy\|^2\right)\right\}\nn\\
&\ \leq\ \max_y\left\{F(x^*,y)-\frac{1}{2\rho_k}\left(\|[\lambda^k_\bfy+\rho_k d(x^*,y)]_+\|^2-\|\lambda^k_\bfy\|^2\right)\right\}\nn\\
&\  \overset{\eqref{Fhi}}{\leq} F_{\rm hi}+\frac{1}{2\rho_k}\|\lambda^k_\bfy\|^2\overset{ \eqref{ly-cnstr}}{\leq}F_{\rm hi}+\frac{1}{2}\|\lambda_\bfy^0\|^2+\frac{\Delta+D_\bfy}{1-\tau},\label{e2}
\end{align}
where the second inequality is due to $c(x^*)\leq0$. Moreover, it follows from this, \eqref{AL}, \eqref{cdhi}, \eqref{Fhi}, \eqref{ly-cnstr}, $\lambda^k_\bfy\in\bR_+^{\tm}$ and $\|\lambda^k_\bfx\|\leq\Lambda$ that
\begin{align}
& \min_{(x,y)\in\mcX\times\mcY}\AL(x,y,\lambda^k_\bfx,\lambda^k_\bfy;\rho_k)\overset{\eqref{AL}}{\geq}\min_{(x,y)\in\mcX\times\mcY}\left\{F(x,y)-\frac{1}{2\rho_k}\|\lambda^k_\bfx\|^2-\frac{1}{2\rho_k}\|[\lambda^k_\bfy+\rho_kd(x,y)]_+\|^2\right\}\nn\\
&\ \geq\min_{(x,y)\in\mcX\times\mcY}\left\{F(x,y)-\frac{1}{2\rho_k}\|\lambda^k_\bfx\|^2-\frac{1}{2\rho_k}\left(\|\lambda^k_\bfy\|+\rho_k\|[d(x,y)]_+\|\right)^2\right\}\nn\\
&\ \geq\min_{(x,y)\in\mcX\times\mcY}\left\{F(x,y)-\frac{1}{2\rho_k}\|\lambda^k_\bfx\|^2-\rho_k^{-1}\|\lambda^k_\bfy\|^2-\rho_k\|[d(x,y)]_+\|^2\right\}\nn\\
&\ \geq F_{\rm low}-\frac{\Lambda^2}{2\rho_k}-\|\lambda_\bfy^0\|^2-\frac{2(\Delta+D_\bfy)}{1-\tau}-\rho_kd_{\rm hi}^2,\label{e3}
\end{align}
where the second inequality is due to $\lambda^k_\bfy\in\bR_+^{\tm}$ and the last inequality is due to \eqref{cdhi}, \eqref{Fhi}, \eqref{ly-cnstr} and $\|\lambda^k_\bfx\|\leq\Lambda$.

To complete the rest of the proof, let
\begin{align}
&H(x,y)=\AL(x,y,\lambda^k_\bfx,\lambda^k_\bfy;\rho_k),\quad H^*=\min_x\max_y\AL(x,y,\lambda^k_\bfx,\lambda^k_\bfy;\rho_k),\label{Hk1}\\
&H_{\rm low}=\min_{(x,y)\in\mcX\times\mcY}\AL(x,y,\lambda^k_\bfx,\lambda^k_\bfy;\rho_k).\label{Hk2}
\end{align}
In view of these, \eqref{init-upper}, \eqref{e1}, \eqref{e2}, \eqref{e3}, we obtain that
\begin{align*}
&\max_yH(x^k_{\rm init},y)\overset{\eqref{init-upper}}{\leq}\Delta+F_{\rm hi}+\Lambda+\frac{1}{2}(\tau^{-1}+\|\lambda_\bfy^0\|^2)+\frac{\Delta+D_\bfy}{1-\tau},\\
&F_{\rm low}-\frac{\Lambda^2}{2\rho_k}\overset{\eqref{e1}}{\leq} H^*\overset{\eqref{e2}}{\leq} F_{\rm hi}+\frac{1}{2}\|\lambda_\bfy^0\|^2+\frac{\Delta+D_\bfy}{1-\tau},\\
&H_{\rm low}\overset{\eqref{e3}}{\geq}F_{\rm low}-\frac{\Lambda^2}{2\rho_k}-\|\lambda_\bfy^0\|^2-\frac{2(\Delta+D_\bfy)}{1-\tau}-\rho_kd_{\rm hi}^2.
\end{align*}
Using these, \eqref{def-r}, and Theorem \ref{mmax-thm} with $\hat x^0=x^k_{\rm init}$, $\epsilon=\epsilon_k$, $\hat\epsilon_0=\epsilon_k/(2\sqrt{\rho_k})$, $L_{\nabla\bh}=L_k$, 
and $\bH$, $\bH^*$, $\bH_{\rm low}$ given in \eqref{Hk1} and \eqref{Hk2}, we can conclude that Algorithm \ref{mmax-alg2} performs at most $N_k$ evaluations of $\nabla f$, $\nabla c$, $\nabla d$ and proximal operator of $p$ and $q$ for finding an $\epsilon_k$-primal-dual stationary point of problem \eqref{AL-sub} satisfying \eqref{upperbnd}.
\end{proof}

The following lemma provides an upper bound on the violation of the conditions in \eqref{kkt3} at $(x,\lambda_\bfx)=(x^{k+1},\tl^{k+1}_\bfx)$ for $0\leq k\in\bbK-1$, where $\tl^{k+1}_\bfx$ is given below.

\begin{lemma}\label{l-xcnstr2}
Suppose that Assumptions \ref{a1}, \ref{knownfeas} and \ref{mfcq} hold. Let $D_\bfy$, $\Delta$ and $L$ be defined in \eqref{mmax-D},  \eqref{def-r} and \eqref{hL}, $L_F$, $L_c$, $\delta_c$ and $\theta$ be given in Assumption \ref{mfcq}, and $\tau$, $\rho_k$, $\Lambda$ and $\lambda_\bfy^0$ be given in Algorithm \ref{AL-alg}. Suppose that $(x^{k+1},\lambda^{k+1}_\bfx)$ is generated by Algorithm \ref{AL-alg} for some $0\leq k\in\bbK-1$ with 
\begin{align}
\rho_k \geq \max\Bigg\{&\theta^{-1}\Lambda, \theta^{-2}\Big\{4\Delta+2\Lambda+\tau^{-1}+\|\lambda_\bfy^0\|^2+\frac{2(\Delta+D_\bfy)}{1-\tau} +\frac{D_\bfy}{2} +L_c^{-2} +4D_\bfy^2L+\Lambda^2\Big\}, \nn \\ 
& \frac{4\|\lambda_\bfy^0\|^2}{\delta_d^2\tau}+\frac{8(\Delta+D_\bfy)}{\delta_d^2\tau(1-\tau)}\Bigg\}. \label{rhok-bnd}
\end{align}
Let 
\beq \label{def-tlx1} 
 \tl^{k+1}_\bfx=[\lambda^k_\bfx+\rho_kc(x^{k+1})]_+.
\eeq
Then we have
\begin{align}
&\|[c(x^{k+1})]_+\|\leq \rho_k^{-1}\delta_c^{-1}\left(L_F +2L_d\delta_d^{-1}(\Delta+D_\bfy)+1\right), \label{x-cnstr2} \\
& |\langle\tl^{k+1}_\bfx,c(x^{k+1})\rangle| \leq \rho^{-1}_k \delta_c^{-1}(L_F +2L_d\delta_d^{-1}(\Delta+D_\bfy)+1)\max\{\delta_c^{-1}(L_F +2L_d\delta_d^{-1}(\Delta+D_\bfy)+1), \Lambda\}. \label{c-complim}
\end{align}
\end{lemma}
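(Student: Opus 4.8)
The plan is to proceed in three stages, mirroring the treatment of the $y$-constraint in Lemmas \ref{l-ycnstr} and \ref{l-subdcnstr}, but now exploiting the generalized MFCQ in Assumption \ref{mfcq}(i).

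First I would establish the crude feasibility $x^{k+1}\in\cF(\theta_f)$, i.e. $\|[c(x^{k+1})]_+\|\le\theta_f$, which is what licenses the use of the constraint qualification. Splitting $\AL=\AL_\bfy+\tfrac{1}{2\rho_k}(\|[\lambda^k_\bfx+\rho_k c(x^{k+1})]_+\|^2-\|\lambda^k_\bfx\|^2)$ via \eqref{y-AL} and using $[\lambda^k_\bfx+\rho_k c(x^{k+1})]_+\ge\rho_k[c(x^{k+1})]_+$ (componentwise, since $\lambda^k_\bfx\ge0$) together with \eqref{p-ineq}, I would derive the lower bound
\[
\max_y\AL(x^{k+1},y,\lambda^k_\bfx,\lambda^k_\bfy;\rho_k)\ge f^*_{\rm low}+\tfrac{\rho_k}{2}\|[c(x^{k+1})]_+\|^2-\tfrac{\Lambda^2}{2\rho_k}.
\]
Combining this with the upper bound \eqref{upperbnd} of Lemma \ref{l-subp}, multiplying by $2/\rho_k$, and simplifying the residual terms using $\rho_k\ge1$, $\epsilon_k\le\tepsilon_0\le1$, $L_k\ge L_c^2$, and $L_k\le\rho_kL$ (the last from $\|\lambda^k_\bfx\|\le\Lambda$, Lemma \ref{l-lycnstr} and the definition \eqref{hL} of $L$) yields $\|[c(x^{k+1})]_+\|^2\le\rho_k^{-1}B$, where $B$ is exactly the bracketed quantity multiplying $\theta_f^{-2}$ in \eqref{rhok-bnd}. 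The second term in the lower bound \eqref{rhok-bnd} then forces $\|[c(x^{k+1})]_+\|\le\theta_f$.

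Next I would bound the multiplier $\tl^{k+1}_\bfx$. Since $x^{k+1}\in\cF(\theta_f)$, Assumption \ref{mfcq}(i) supplies a unit vector $v:=v_{x^{k+1}}$ with $v^T\nabla c_i(x^{k+1})\le-\delta_c$ on $\cA(x^{k+1};\theta_a)$. The first term $\rho_k\ge\theta_a^{-1}\Lambda$ in \eqref{rhok-bnd} guarantees $(\tl^{k+1}_\bfx)_i=[\lambda^k_{\bfx,i}+\rho_kc_i(x^{k+1})]_+=0$ whenever $i\notin\cA(x^{k+1};\theta_a)$ (as then $\lambda^k_{\bfx,i}\le\Lambda$ while $\rho_kc_i(x^{k+1})<-\rho_k\theta_a\le-\Lambda$), so that $v^T\nabla c(x^{k+1})\tl^{k+1}_\bfx\le-\delta_c\|\tl^{k+1}_\bfx\|_1$. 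Taking the inner product of $v$ with the near-stationarity residual \eqref{pF-x} of Lemma \ref{l-subdcnstr} (valid because \eqref{rhok-bnd} implies \eqref{muk-1}), and bounding $\|w\|\le L_F$ for the relevant $w\in\partial_x F$, $\|\nabla_x d\|\le L_d$ and $\|\lambda^{k+1}_\bfy\|\le2\delta_d^{-1}(\tepsilon_0+L_F)D_\bfy$ (from \eqref{y-cnstr}), I obtain $\delta_c\|\tl^{k+1}_\bfx\|_1\le\epsilon_k+L_F+2L_d\delta_d^{-1}(\tepsilon_0+L_F)D_\bfy$, whence $\|\tl^{k+1}_\bfx\|\le\|\tl^{k+1}_\bfx\|_1\le\delta_c^{-1}(L_F+2L_d\delta_d^{-1}(\tepsilon_0+L_F)D_\bfy+\tepsilon_0)$ after using $\epsilon_k\le\tepsilon_0$. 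Then \eqref{x-cnstr2} follows from $\rho_k\|[c(x^{k+1})]_+\|\le\|[\lambda^k_\bfx+\rho_kc(x^{k+1})]_+\|=\|\tl^{k+1}_\bfx\|$.

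Finally, for the complementarity bound \eqref{c-complim} I would use the identity $\langle\tl^{k+1}_\bfx,\lambda^k_\bfx+\rho_kc(x^{k+1})\rangle=\|\tl^{k+1}_\bfx\|^2\ge0$ to get the lower estimate $\langle\tl^{k+1}_\bfx,c(x^{k+1})\rangle\ge-\rho_k^{-1}\Lambda\|\tl^{k+1}_\bfx\|$, and the componentwise inequality $c(x^{k+1})\le[c(x^{k+1})]_+$ with $\tl^{k+1}_\bfx\ge0$ for the upper estimate $\langle\tl^{k+1}_\bfx,c(x^{k+1})\rangle\le\|\tl^{k+1}_\bfx\|\,\|[c(x^{k+1})]_+\|$; feeding in \eqref{x-cnstr2} and the bound on $\|\tl^{k+1}_\bfx\|$ then gives \eqref{c-complim}. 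I expect the main obstacle to be the first stage: one must certify $x^{k+1}\in\cF(\theta_f)$ purely from the optimization bound \eqref{upperbnd}, which requires carefully absorbing the $\rho_k$-dependent smoothness constant $L_k$ into $\rho_kL$ so that the right-hand side collapses to precisely the bracket appearing in \eqref{rhok-bnd}; the remaining stages are then routine duality and MFCQ manipulations.
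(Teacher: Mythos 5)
Your proposal is correct and follows essentially the same route as the paper's own proof: the identical lower bound $\max_y\AL(x^{k+1},y,\lambda^k_\bfx,\lambda^k_\bfy;\rho_k)\geq f^*_{\rm low}+\frac{\rho_k}{2}\|[c(x^{k+1})]_+\|^2-\frac{\Lambda^2}{2\rho_k}$ combined with \eqref{upperbnd} and $\rho_k L_c^2\leq L_k\leq\rho_k L$ to certify $x^{k+1}\in\cF(\theta_f)$, the same use of Assumption \ref{mfcq}(i) together with $\rho_k\geq\theta_a^{-1}\Lambda$ to annihilate the components of $\tl^{k+1}_\bfx$ outside $\cA(x^{k+1};\theta_a)$ and then bound $\|\tl^{k+1}_\bfx\|_1$ through the stationarity residual, and the same two-sided complementarity sandwich. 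The only cosmetic deviations (citing \eqref{pF-x} of Lemma \ref{l-subdcnstr} instead of re-deriving it from \eqref{stationary}, and simplifying with $L_k\geq L_c^2$ rather than $L_k\geq\rho_k L_c^2$) are immaterial.
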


\begin{proof}
One can observe from \eqref{AL},\eqref{F-gap}, \eqref{y-AL} and \eqref{p-ineq} that
\begin{align*}
\max_y\AL(x^{k+1},y,\lambda^k_\bfx,\lambda^k_\bfy;\rho_k)=\ &\ \max_y\AL_\bfy(x^{k+1},y,\lambda^k_\bfy;\rho_k)+\frac{1}{2\rho_k}\left(\|[\lambda^k_\bfx+\rho_k c(x^{k+1})]_+\|^2-\|\lambda^k_\bfx\|^2\right)\\
\overset{\eqref{p-ineq}}{\geq}&\ f^*(x^{k+1})+\frac{1}{2\rho_k}\left(\|[\lambda^k_\bfx+\rho_k c(x^{k+1})]_+\|^2-\|\lambda^k_\bfx\|^2\right)\\
\overset{\eqref{F-gap}}{\geq}\ &\ F_{\rm low}+\frac{1}{2\rho_k}\left(\|[\lambda^k_\bfx+\rho_k c(x^{k+1})]_+\|^2-\|\lambda^k_\bfx\|^2\right).
\end{align*}
By this inequality, \eqref{upperbnd} and $\|\lambda^k_\bfx\|\leq\Lambda$, one has
\begin{align*}
&\|[\lambda^k_\bfx+\rho_kc(x^{k+1})]_+\|^2\leq 2\rho_k\max_y\AL(x^{k+1},y,\lambda^k_\bfx,\lambda^k_\bfy;\rho_k)-2\rho_k F_{\rm low}+\|\lambda^k_\bfx\|^2\\
&\leq2\rho_k\max_y\AL(x^{k+1},y,\lambda^k_\bfx,\lambda^k_\bfy;\rho_k)-2\rho_k F_{\rm low}+\Lambda^2\\
&\overset{\eqref{upperbnd}}\leq 2\rho_k\Delta+2\rho_kF_{\rm hi}+2\rho_k\Lambda+\rho_k(\tau^{-1}+\|\lambda_\bfy^0\|^2)+\frac{2\rho_k(\Delta+D_\bfy)}{1-\tau}+\frac{\rho_k\epsilon_k D_\bfy}{2}\\
&\qquad +L_k^{-1}\epsilon_k^2+4D_\bfy^2L_k-2\rho_k F_{\rm low}+\Lambda^2.
\end{align*}
This together with \eqref{def-r} and $\rho_k^2\|[c(x^{k+1})]_+\|^2\leq\|[\lambda^k_\bfx+\rho_kc(x^{k+1})]_+\|^2$ implies that
\begin{align} 
\|[c(x^{k+1})]_+\|^2\leq&\ \rho_k^{-1}\left(4\Delta+2\Lambda+\tau^{-1}+\|\lambda_\bfy^0\|^2+\frac{2(\Delta+D_\bfy)}{1-\tau}+\frac{\epsilon_k D_\bfy}{2}\right)\nn\\
&\ +\rho_k^{-2}\left(L_k^{-1}\epsilon_k^2+4D_\bfy^2L_k+\Lambda^2\right). \label{x-cnstr}
\end{align}
In addition, we observe from \eqref{Lk}, \eqref{hL}, \eqref{ly-cnstr}, $\rho_k\geq1$ and $\|\lambda^k_\bfx\|\leq\Lambda$ that for all $0\leq k\leq K$,
\begin{align}
\rho_kL_c^2 & \leq L_k= L_{\nabla f}+\rho_kL_c^2+\rho_kc_{\rm hi}L_{\nabla c}+\|\lambda^k_\bfx\|L_{\nabla c}+\rho_kL_d^2+\rho_kd_{\rm hi}L_{\nabla d}+\|\lambda^k_\bfy\|L_{\nabla d}\nn\\
&\leq L_{\nabla f}+\rho_kL_c^2+\rho_kc_{\rm hi}L_{\nabla c}+\Lambda L_{\nabla c}+\rho_kL_d^2+\rho_kd_{\rm hi}L_{\nabla d}\nn\\
&\ \ \ \ +L_{\nabla d}\sqrt{\rho_k\left(\|\lambda_\bfy^0\|^2+\frac{2(\Delta+D_y)}{1-\tau}\right)}\leq\rho_kL.\label{L-ineq}
\end{align}
 Using this relation, \eqref{rhok-bnd}, \eqref{x-cnstr},  $\rho_k\geq1$ and $\epsilon_k \leq 1$, we have
\begin{align*}
\|[c(x^{k+1})]_+\|^2\leq&\ \rho_k^{-1}\left(4\Delta+2\Lambda+\tau^{-1}+\|\lambda_\bfy^0\|^2+\frac{2(\Delta+D_\bfy)}{1-\tau}+\frac{\epsilon_k D_\bfy}{2}\right)\nn\\
& \ +\rho_k^{-2}\left((\rho_kL_c^2)^{-1}\epsilon_k^2+4\rho_kD_\bfy^2L+\Lambda^2\right)\\
\leq\ &\ \rho_k^{-1}\left(4\Delta+2\Lambda+\tau^{-1}+\|\lambda_\bfy^0\|^2+\frac{2(\Delta+D_\bfy)}{1-\tau}+\frac{D_\bfy}{2}\right)\nn\\
& \ +\rho_k^{-1}\left(L_c^{-2}+4D_\bfy^2L+\Lambda^2\right)\overset{\eqref{rhok-bnd}}{\leq}\theta^2,
\end{align*}
which together with \eqref{def-cAcS} implies that $x^{k+1}\in\cF(\theta)$.  

It follows from $x^{k+1}\in\cF(\theta)$ and Assumption \ref{mfcq}(i) that there exists some
$v\in\mcT_{\mcX}(x^{k+1})$ such that $\|v\|=1$ and $v^T\nabla c_i(x^{k+1})\leq-\delta_c$ for all $i\in\cA(x^{k+1};\theta)$, where $\cA(x^{k+1};\theta)$ is defined in \eqref{def-cAcS}. Let $\bar\cA(x^{k+1};\theta)=\{1,2,\ldots,\tn\}\backslash\cA(x^{k+1};\theta)$. Notice from \eqref{def-cAcS} that $c_i(x^{k+1})<-\theta$ for all $i\in\bar\cA(x^{k+1};\theta)$. In addition, observe from \eqref{rhok-bnd} that $\rho_k\geq\theta^{-1}\Lambda$. 
Using these and $\|\lambda^k_\bfx\|\leq\Lambda$, we obtain that $(\lambda^k_\bfx+\rho_kc(x^{k+1}))_i\leq\Lambda-\rho_k\theta\leq0$ for all $i\in\bar\cA(x^{k+1};\theta)$. By this and the fact that $v^T\nabla c_i(x^{k+1})\leq-\delta_c$ for all $i\in\cA(x^{k+1};\theta)$, one has
\begin{align}
&v^T\nabla c(x^{k+1})\tl^{k+1}_\bfx\overset{\eqref{def-tlx1}}=v^T\nabla c(x^{k+1})[\lambda^k_\bfx+\rho_kc(x^{k+1})]_+=\sum_{i=1}^\tn v^T\nabla c_i(x^{k+1})([\lambda^k_\bfx+\rho_kc(x^{k+1})]_+)_i\nn\\
&=\sum_{i\in\cA(x^{k+1};\theta)}v^T\nabla c_i(x^{k+1})([\lambda^k_\bfx+\rho_kc(x^{k+1})]_+)_i+\sum_{i\in\bar\cA(x^{k+1};\theta)}v^T\nabla c_i(x^{k+1})([\lambda^k_\bfx+\rho_kc(x^{k+1})]_+)_i\nn\\
&\leq-\delta_c \sum_{i\in\cA(x^{k+1};\theta)}([\lambda^k_\bfx+\rho_kc(x^{k+1})]_+)_i=-\delta_c  
\sum_{i=1}^\tn([\lambda^k_\bfx+\rho_kc(x^{k+1})]_+)_i \overset{\eqref{def-tlx1}}= -\delta_c\|\tl^{k+1}_\bfx\|_1. \label{mfcq-ineq}
\end{align}

Since $(x^{k+1},y^{k+1})$ is an $\epsilon_k$-primal-dual stationary point of \eqref{AL-sub}, it follows from  \eqref{AL} and \eqref{stationary} that there exists some $s\in\partial_xF(x^{k+1},y^{k+1})$ such that
\[
\|s+\nabla c(x^{k+1})[\lambda^k_\bfx+\rho_kc(x^{k+1})]_+-\nabla_xd(x^{k+1},y^{k+1})[\lambda^k_\bfy+\rho_kd(x^{k+1},y^{k+1})]_+\|\leq\epsilon_k,
\]
which along with \eqref{def-tlx1} and $\lambda^{k+1}_\bfy=[\lambda^k_\bfy+\rho_xd(x^{k+1},y^{k+1})]_+$ implies that
\beq \label{eps-subgrad}
\|s+\nabla c(x^{k+1})\tl^{k+1}_\bfx -\nabla_xd(x^{k+1},y^{k+1})\lambda^{k+1}_\bfy\|\leq\epsilon_k.
\eeq
In addition, since $v\in\mcT_{\mcX}(x^{k+1})$, there exist $\{z^t\} \subset \mcX$ and $\{\alpha_t\} \downarrow 0$ such that $z^t=x^{k+1}+\alpha_t v+o(\alpha_t)$ for all $t$.  Also, since $s\in\partial_xF(x^{k+1},y^{k+1})$, one has $s=\nabla_x f(x^{k+1},y^{k+1})+s_p$ for some $s_p\in\partial p(x^{k+1})$. Using these and Assumptions \ref{a1} and  \ref{mfcq}(iii), we have
\begin{align}
\langle s, v \rangle &=\langle \nabla_x f(x^{k+1},y^{k+1}), v \rangle + \lim_{t \to \infty} \alpha_t^{-1}\langle s_p, z^t-x^{k+1}\rangle  \nn \\
& = \lim_{t \to \infty} \alpha_t^{-1} (f(z^t,y^{k+1})-f(x^{k+1},y^{k+1})) + \lim_{t \to \infty} \alpha_t^{-1}\langle s_p, z^t-x^{k+1}\rangle \nn \\
& \leq \lim_{t \to \infty} \alpha_t^{-1} (f(z^t,y^{k+1})-f(x^{k+1},y^{k+1})) + \lim_{t \to \infty} \alpha_t^{-1} (p(z^t)-p(x^{k+1}))  \nn \\
& = \lim_{t \to \infty} \alpha_t^{-1} (F(z^t,y^{k+1})-F(x^{k+1},y^{k+1})) \leq L_F \lim_{t \to \infty} \alpha_t^{-1} \|z^t-x^{k+1}\| =L_F, \label{s-bnd}
\end{align}
where the second equality is due to the differentiability of $f$, the first inequality follows from the convexity of $p$ and $s_p\in\partial p(x^{k+1})$, the second inequality is due to the $L_F$-Lipschitz continuity of $F(\cdot, y^{k+1})$, and the last equality follows from $\lim_{t \to \infty} \alpha_t^{-1} \|z^t-x^{k+1}\|=\|v\|=1$.

By \eqref{mfcq-ineq}, \eqref{eps-subgrad}, \eqref{s-bnd},  and $\|v\|=1$, one has
\begin{align*}
\epsilon_k& \geq\|s+\nabla c(x^{k+1})\tl^{k+1}_\bfx -\nabla_xd(x^{k+1},y^{k+1})\lambda^{k+1}_\bfy\|\cdot\|v\|\\
&\geq\langle s+\nabla c(x^{k+1})\tl^{k+1}_\bfx -\nabla_xd(x^{k+1},y^{k+1})\lambda^{k+1}_\bfy,-v\rangle\\
&=-\langle s-\nabla_xd(x^{k+1},y^{k+1})\lambda^{k+1}_\bfy,v\rangle-v^T\nabla c(x^{k+1})\tl^{k+1}_\bfx\\
&\overset{\eqref{mfcq-ineq}}\geq-\langle s, v \rangle - \|\nabla_xd(x^{k+1},y^{k+1})\|\|\lambda^{k+1}_\bfy\| \|v\|+\delta_c\|\tl^{k+1}_\bfx\|_1\\
&\geq -L_F -L_d\|\lambda^{k+1}_\bfy\|+\delta_c\|\tl^{k+1}_\bfx\|_1,
\end{align*}
where the last inequality is due to  \eqref{s-bnd}, $\|v\|=1$ and Assumption \ref{a1}(iii).
Notice from \eqref{rhok-bnd} that \eqref{muk-bnd} holds. It then follows from \eqref{y-cnstr} that $\|\lambda^{k+1}_\bfy\|\leq2\delta_d^{-1}(\Delta+D_\bfy)$, which together with the above inequality and $\epsilon_k\leq 1$ yields
\beq \label{tl-bnd}
\|\tl^{k+1}_\bfx\| \leq \|\tl^{k+1}_\bfx\|_1 \leq \delta_c^{-1}(L_F +L_d\|\lambda^{k+1}_\bfy\|+\epsilon_k)\leq \delta_c^{-1}(L_F +2L_d\delta_d^{-1}(\Delta+D_\bfy)+1).
\eeq
By this and \eqref{def-tlx1}, one can observe that
\[
\|[c(x^{k+1})]_+\| \le \rho^{-1}_k \|[\lambda^k_\bfx+\rho_kc(x^{k+1})]_+\| = \rho^{-1}_k \|\tl^{k+1}_\bfx\| \leq \rho^{-1}_k\delta_c^{-1}(L_F +2L_d\delta_d^{-1}(\Delta+D_\bfy)+1).
\]
Hence, \eqref{x-cnstr2} holds as desired. 

We next show that \eqref{c-complim} holds. Indeed, by $\tl^{k+1}_\bfx\geq 0$, \eqref{x-cnstr2} and \eqref{tl-bnd}, one has 
\begin{align}
\langle\tl^{k+1}_\bfx,c(x^{k+1})\rangle & \ \ \leq \ \langle\tl^{k+1}_\bfx,[c(x^{k+1})]_+\rangle \leq\|\tl^{k+1}_\bfx\|\|[c(x^{k+1})]_+\|\nn \\ 
& \overset{\eqref{x-cnstr2}\eqref{tl-bnd}}\leq\rho^{-1}_k\delta_c^{-2}(L_F +2L_d\delta_d^{-1}(\Delta+D_\bfy)+1)^2. \label{complim-ineq2}
\end{align}
Using a similar argument as for the proof of \eqref{complim-ineq}, we have
\[
-\langle\tl^{k+1}_\bfx,\rho_k^{-1}\lambda^k_\bfx\rangle
\leq \langle\tl^{k+1}_\bfx,c(x^{k+1})\rangle,
\]
which along with $\|\lambda^k_\bfx\|\leq\Lambda$ and \eqref{tl-bnd} yields
\[
\langle\tl^{k+1}_\bfx,c(x^{k+1})\rangle\geq-\rho_k^{-1}\|\tl^{k+1}_\bfx\|\|\lambda^k_\bfx\| \geq -\rho^{-1}_k \delta_c^{-1}(L_F +2L_d\delta_d^{-1}(\Delta+D_\bfy)+1)\Lambda.
\]
The relation \eqref{c-complim} then follows from this and \eqref{complim-ineq2}.
\end{proof}

We are now ready to prove Theorem \ref{complexity} using Lemmas \ref{l-subdcnstr}, \ref{l-subp} and \ref{l-xcnstr2}.

\begin{proof}[\textbf{Proof of Theorem \ref{complexity}}]
(i) Observe from the definition of $K$ in \eqref{K1} and $\epsilon_k=\tau^k$ that $K$ is the smallest nonnegative integer such that $\epsilon_K\leq\varepsilon$. Hence, Algorithm \ref{AL-alg} terminates and outputs $(x^{K+1},y^{K+1})$ after $K+1$ outer iterations. It follows from these and $\rho_k=\epsilon_k^{-1}$ that $\epsilon_K\leq\varepsilon$ and $\rho_K\geq\varepsilon^{-1}$. By this and  \eqref{cond}, one can see that \eqref{muk-1} and \eqref{rhok-bnd} holds for $k=K$. It then follows from Lemmas \ref{l-subdcnstr} and \ref{l-xcnstr2} that \eqref{t1-1}-\eqref{t1-6} hold. 

(ii) Let $K$ and $N$ be given in \eqref{K1} and \eqref{N2}. Recall from Lemma \ref{l-subp} that the number of evaluations of $\nabla f$, $\nabla c$, $\nabla d$, proximal operator of $p$ and $q$ performed by Algorithm \ref{mmax-alg2} at iteration $k$ of Algorithm~\ref{AL-alg} is at most $N_k$, where $N_k$ is given in \eqref{mmax-N}. By this and statement (i) of this theorem, one can observe that the total number of evaluations of $\nabla f$, $\nabla c$, $\nabla d$, proximal operator of $p$ and $q$ performed in Algorithm~\ref{AL-alg} is no more than $\sum_{k=0}^KN_k$, respectively. As a result, to prove statement (ii) of this theorem, it suffices to show that $\sum_{k=0}^KN_k\leq N$. Recall from  \eqref{L-ineq} and Algorithm \ref{AL-alg} that  $\rho_kL_c^2\leq L_k\leq\rho_kL$ and $\rho_k\geq1\geq\epsilon_k$. Using these, \eqref{ho}, \eqref{hM}, \eqref{hT}, \eqref{mmax-omega}, \eqref{mmax-zeta}, \eqref{mmax-Mk} and \eqref{mmax-Tk}, we obtain that
\begin{align}
&1\geq\alpha_k\geq\min\left\{1,\sqrt{4\epsilon_k/(\rho_kD_\bfy L)}\right\}\geq\epsilon_k^{1/2}\rho_k^{-1/2}\alpha, \label{alpha-ineq}\\
&\delta_k\leq(2+\epsilon_k^{-1/2}\rho_k^{1/2}\alpha^{-1})\rho_kL D_\bfx^2+\max\{1/D_\bfy,\rho_kL/4\}D_\bfy^2\leq\epsilon_k^{-1/2}\rho_k^{3/2}\delta, \label{delta-ineq}\\
&M_k\leq\frac{16\max\left\{1/(2\rho_kL_c^2),4/(\epsilon_k^{1/2}\rho_k^{-1/2}\alpha\rho_kL_c^2)\right\}\rho_k}{\left[(3\rho_kL+1/(2D_\bfy))^2/\min\{\rho_kL_c^2,\epsilon_k/(2D_\bfy)\}+3\rho_kL+1/(2D_\bfy)\right]^{-2}\epsilon_k^2}\times\Bigg(\epsilon_k^{-1/2}\rho_k^{3/2}\delta \nn \\
&\ \ \ \ \ \ \ \ +2\epsilon_k^{-1/2}\rho_k^{1/2}\alpha^{-1}\Big(\Delta+\frac{\Lambda^2}{2}+\frac{3}{2}\|\lambda_\bfy^0\|^2+\frac{3(\Delta+D_\bfy)}{1-\tau}+\rho_kd_{\rm hi}^2 +\frac{D_\bfy}{4}+\rho_kL D_\bfx^2\Big)\Bigg) \label{M-ineq} \\
&\leq \frac{16\epsilon_k^{-1/2}\rho_k^{-1/2}\max\left\{1/(2L_c^2),4/(\alpha L_c^2)\right\}\rho_k}{\epsilon_k^2\rho_k^{-4}\left[(3L+1/(2D_\bfy))^2/\min\{L_c^2,1/(2D_\bfy)\}+3L+1/(2D_\bfy)\right]^{-2}\epsilon_k^2}\times(\epsilon_k^{-1/2}\rho_k^{3/2}) \nn \\
&\ \ \times \Bigg(\delta+2\alpha^{-1} \Big(\Delta+\frac{\Lambda^2}{2}+\frac{3}{2}\|\lambda_\bfy^0\|^2+\frac{3(\Delta+D_\bfy)}{1-\tau}+d_{\rm hi}^2+\frac{D_\bfy}{4}+L D_\bfx^2\Big)\Bigg)\leq\epsilon_k^{-5}\rho_k^6M, \nn \\
&T_k\leq\Bigg\lceil16\left(2\Delta+\Lambda+\frac{1}{2}(\tau^{-1}+\|\lambda_\bfy^0\|^2)+\frac{\Delta+D_\bfy}{1-\tau}+\frac{\Lambda^2}{2}+\frac{D_\bfy}{4}\right)\epsilon_k^{-2}\rho_kL\nn \\
&\ \ \ \ \ \ \ \ +8(1+4D_\bfy^2\rho_k^2L^2\epsilon_k^{-2})\rho_k^{-1}-1\Bigg\rceil_+\leq\epsilon_k^{-2}\rho_kT, \nn
\end{align}
where \eqref{M-ineq} follows from \eqref{ho}, \eqref{hM}, \eqref{hT}, \eqref{alpha-ineq}, \eqref{delta-ineq}, $\rho_kL_c^2\leq L_k\leq\rho_kL$, and $\rho_k\geq1\geq\epsilon_k$. 
By the above inequalities, \eqref{mmax-N}, \eqref{L-ineq}, $T\geq1$ and $\rho_k\geq1\geq\epsilon_k$, one has
\begin{align}
&\sum_{k=0}^KN_k\leq \sum_{k=0}^K \left(\left\lceil96\sqrt{2}\left(1+\left(24\rho_kL+4/D_\bfy\right)/(\rho_kL_c^2)\right)\right\rceil+2\right)\max\left\{2,\sqrt{D_\bfy\rho_kL\epsilon_k^{-1}}\right\}\nn\\
&\ \ \ \ \ \ \ \ \ \ \ \ \ \times\left((\epsilon_k^{-2}\rho_kT+1)(\log (\epsilon_k^{-5}\rho_k^6M))_++\epsilon_k^{-2}\rho_kT+1+2\epsilon_k^{-2}\rho_kT\log(\epsilon_k^{-2}\rho_kT+1) \right)\nn\\
&\leq\sum_{k=0}^K\left(\left\lceil96\sqrt{2}\left(1+\left(24L+4/D_\bfy\right)/L_c^2\right)\right\rceil+2\right)\max\left\{2,\sqrt{D_\bfy L}\right\}\epsilon_k^{-1/2}\rho_k^{1/2}\nn\\
&\ \ \ \times\epsilon_k^{-2}\rho_k\left((T+1)(\log (\epsilon_k^{-5}\rho_k^6M))_++T+1+2T\log(\epsilon_k^{-2}\rho_kT+1) \right)\nn\\
&\leq\sum_{k=0}^K\left(\left\lceil96\sqrt{2}\left(1+\left(24L+4/D_\bfy\right)/L_c^2\right)\right\rceil+2\right)\max\left\{2,\sqrt{D_\bfy L}\right\}\nn\\
&\ \ \ \times\epsilon_k^{-5/2}\rho_k^{3/2}T\left(2(\log (\epsilon_k^{-5}\rho_k^6M))_++2+2\log(2\epsilon_k^{-2}\rho_kT) \right)\nn\\
&\leq\sum_{k=0}^K\left(\left\lceil96\sqrt{2}\left(1+\left(24L+4/D_\bfy\right)/L_c^2\right)\right\rceil+2\right)\max\left\{2,\sqrt{D_\bfy L}\right\}T\nn\\
&\ \ \ \times\epsilon_k^{-5/2}\rho_k^{3/2}\left(14\log\rho_k-14\log\epsilon_k+2(\log M)_++2+2\log(2T) \right),\label{sum-N}
\end{align}
By the definition of $K$ in \eqref{K1}, one has $\tau^K\geq\tau\varepsilon$. Also, notice from Algorithm \ref{AL-alg} that $\rho_k=\tau^{-k}$. It then follows from these, \eqref{N2} and \eqref{sum-N} that
\begin{align*}
&\sum_{k=0}^KN_k\leq\sum_{k=0}^K\left(\left\lceil96\sqrt{2}\left(1+\left(24L+4/D_y\right)/L_c^2\right)\right\rceil+2\right)\max\left\{2,\sqrt{D_yL}\right\}T\\
&\ \ \ \ \ \ \ \ \ \ \ \ \ \times\epsilon_k^{-4}\left(28\log(1/\epsilon_k)+2(\log M)_++2+2\log(2T) \right)\\
&= \left(\left\lceil96\sqrt{2}\left(1+\left(24L+4/D_y\right)/L_c^2\right)\right\rceil+2\right)\max\left\{2,\sqrt{D_yL}\right\}T\\
&\ \ \ \times\sum_{k=0}^K\tau^{-4k}\left(28k\log(1/\tau)+2(\log M)_++2+2\log(2T) \right)\\
&\leq \left(\left\lceil96\sqrt{2}\left(1+\left(24L+4/D_y\right)/L_c^2\right)\right\rceil+2\right)\max\left\{2,\sqrt{D_yL}\right\}T\\
&\ \ \ \times\sum_{k=0}^K\tau^{-4k}\left(28K\log(1/\tau)+2(\log M)_++2+2\log(2T) \right)\\
&\leq \left(\left\lceil96\sqrt{2}\left(1+\left(24L+4/D_y\right)/L_c^2\right)\right\rceil+2\right)\max\left\{2,\sqrt{D_yL}\right\}T\\
&\ \ \ \times\tau^{-4K}(1-\tau^4)^{-1}\left(28K\log(1/\tau)+2(\log M)_++2+2\log(2T) \right)\\
&\leq \left(\left\lceil96\sqrt{2}\left(1+\left(24L+4/D_y\right)/L_c^2\right)\right\rceil+2\right)\max\left\{2,\sqrt{D_yL}\right\}T(1-\tau^4)^{-1}\\
&\ \ \ \times \tau^{-4}\varepsilon^{-4}\left(28K\log(1/\tau)+2(\log M)_++2+2\log(2T) \right)\overset{\eqref{N2}}{=} N,
\end{align*}
where the second last inequality is due to $\sum_{k=0}^K\tau^{-4k}\leq \tau^{-4K}/(1-\tau^4)$, and the last inequality is due to $\tau^K\geq\tau\varepsilon$. Hence, statement (ii) of this theorem holds as desired.
\end{proof}

%

\end{document}